\documentclass[12pt]{amsart}

\usepackage{framed}
\usepackage{braket}
\usepackage{amsmath,amssymb,amsthm}
\usepackage{color}
\usepackage{bm}
\usepackage[all]{xy}
\usepackage{amscd}
\usepackage{graphicx}
\usepackage{ascmac}
\usepackage{BOONDOX-frak, mathrsfs}
\usepackage[top=30truemm,bottom=30truemm,left=25truemm,right=25truemm]{geometry}
\usepackage{mathtools}
\mathtoolsset{showonlyrefs=true}
\usepackage{tikz}
\usetikzlibrary{cd}

\makeatletter
\@addtoreset{equation}{section}

\makeatother

\newcommand{\Z}{\mathbb{Z}}

\newcommand{\Q}{\mathbb{Q}}

\newcommand{\F}{\mathbb{F}}

\newcommand{\bE}{\mathbb{E}}

\newcommand{\cI}{\mathcal{I}}

\newcommand{\cL}{\mathcal{L}}

\newcommand{\wtil}[1]{\widetilde{#1}}
\newcommand{\ol}[1]{\overline{#1}}

\newcommand{\lrang}[1]{\langle #1 \rangle}

\DeclareMathOperator{\Gal}{Gal}
\DeclareMathOperator{\Coker}{Cok}
\DeclareMathOperator{\Cok}{Cok}
\DeclareMathOperator{\Ker}{Ker}
\DeclareMathOperator{\Imag}{Im}

\DeclareMathOperator{\id}{id}
\DeclareMathOperator{\Cl}{Cl}
\DeclareMathOperator{\ord}{ord}

\DeclareMathOperator{\rank}{rank}

\DeclareMathOperator{\cha}{char}

\usepackage[OT2,T1]{fontenc}
\DeclareSymbolFont{cyrletters}{OT2}{wncyr}{m}{n}
\DeclareMathSymbol{\Sha}{\mathalpha}{cyrletters}{"58}

\let\oldenumerate\enumerate
\renewcommand{\enumerate}{
   \oldenumerate
   \setlength{\itemsep}{1pt}
   \setlength{\parskip}{0pt}
   \setlength{\parsep}{0pt}
}
\let\olditemize\itemize
\renewcommand{\itemize}{
   \olditemize
   \setlength{\itemsep}{1pt}
   \setlength{\parskip}{0pt}
   \setlength{\parsep}{0pt}
}

\theoremstyle{plain}
\newtheorem{thm}{Theorem}[section]
\newtheorem{lem}[thm]{Lemma}

\newtheorem{prop}[thm]{Proposition}

\theoremstyle{definition}
\newtheorem{defn}[thm]{Definition}

\newtheorem{rem}[thm]{Remark}
\newtheorem{eg}[thm]{Example}

\usepackage{diagbox}
\usepackage{tikz}
\usetikzlibrary{shapes.geometric}
\usepackage{mathtools}
\usepackage{float}
\usepackage{booktabs}

\newcommand{\swid}[1]{\hspace{0.05em} #1 \hspace{0.05em}}
\newcommand{\sfr}{\mathsf{r}}

\newcommand{\AND}{\quad \text{and} \quad}
\DeclareMathOperator{\aug}{aug}
\DeclareMathOperator{\Pic}{Pic}
\DeclareMathOperator{\Jac}{Jac}
\DeclareMathOperator{\Div}{Div}
\DeclareMathOperator{\nt}{nt}
\DeclareMathOperator{\unr}{unr}
\DeclareMathOperator{\tors}{tors}
\DeclareMathOperator{\length}{length}
\DeclareMathOperator{\sat}{sat}

\makeatletter
\@namedef{subjclassname@2020}{\textup{2020} Mathematics Subject Classification}
\makeatother

\title
[Iwasawa-type formula for graphs]
{An Iwasawa-type asymptotic formula for multiple $\Z_p$-coverings of graphs}
\author[T.~Kataoka]{Takenori Kataoka}
\address{Department of Mathematics, Faculty of Science Division II, Tokyo University of Science.
1-3 Kagurazaka, Shinjuku-ku, Tokyo 162-8601, Japan}
\email{tkataoka@rs.tus.ac.jp}
\keywords{Iwasawa theory, Iwasawa invariants, Graph theory}
\subjclass[2020]{05C25 (Primary), 11R23}
\date{\today}

\begin{document}

\maketitle

\begin{abstract}
For a possibly ramified $\Z_p^d$-covering of connected graphs, we establish an Iwasawa-type asymptotic formula for the growth of the $p$-adic valuations of the complexities.
The formula is expressed as a polynomial in $n$ and $p^n$ with explicit leading coefficients $\lambda$ and $\mu$; in particular, we eliminate the error term of the form $O(p^{(d-1)n})$ appearing in earlier work.
We then establish a Kida-type formula describing the behavior of $\lambda$ under a $p$-covering between $\Z_p^d$-coverings, assuming $\mu = 0$.
Finally, for any fixed $p$ and integer $d \geq 2$, we construct an unramified $\Z_p^d$-covering of a bouquet with prescribed $\lambda$- and $\mu$-invariants.
\end{abstract}

\section{Introduction}\label{sec:intro}

For a connected graph $X$, let $\kappa_X$ denote the complexity of $X$, that is, the number of its spanning trees.
By Kirchhoff's matrix-tree theorem, the complexity $\kappa_X$ is equal to the order of the Jacobian group $\Jac X$.
Here and henceforth, a graph is defined as a finite undirected multi-graph, allowing loops and multi-edges.

In recent years, there has been progress in the study of analogies between number theory and graph theory.
In this analogy, a graph $X$ corresponds to a number field $K$, the Jacobian group $\Jac X$ to the class group $\Cl_K$, and the complexity $\kappa_X = \# \Jac X$ to the class number $h_K = \# \Cl_K$.

Iwasawa theory is one of the successful examples of such analogies.
Let $p$ be a prime number.
In classical Iwasawa theory, we study a $\Z_p$-extension $K_{\infty}/K$ of number fields, that is, a tower of number fields
\[
K = K_0 \subset K_1 \subset K_2 \subset \cdots
\]
such that $K_n/K$ is a $\Z/p^n \Z$-extension.
Then Iwasawa's class number formula \cite{Iwa59} asserts that there are integers $\lambda \geq 0, \mu \geq 0, \nu$ such that
\[
\ord_p(h_{K_n}) = \lambda n + \mu p^n + \nu
\]
holds for $n \gg 0$, where $\ord_p$ denotes the $p$-adic valuation normalized by $\ord_p(p) = 1$.

An analogue of Iwasawa's class number formula in graph theory is proved independently by Gonet \cite{Gon22} and McGown--Valli\`eres \cite{MV23}.
Let $X_{\infty}/X$ be an unramified $\Z_p$-covering of connected graphs, that is, a tower of connected graphs
\[
X = X_0 \leftarrow X_1 \leftarrow X_2 \leftarrow \cdots
\]
such that $X_n/X$ is an unramified $\Z/p^n \Z$-covering.
Then there are integers $\lambda \geq 0, \mu \geq 0, \nu$ such that
\[
\ord_p(\kappa_{X_n}) = \lambda n + \mu p^n + \nu
\]
holds for $n \gg 0$.

This paper is concerned with the asymptotic behavior for $\Z_p^d$-coverings of connected graphs with $d \geq 1$.
It consists of three main theorems, which are presented in Subsections \ref{ss:thm1}, \ref{ss:thm3}, and \ref{ss:thm2}, respectively.

\subsection{Iwasawa-type asymptotic formula}\label{ss:thm1}

As the first result, we generalize the aforementioned formula in two directions: we allow ramification and consider multiple $\Z_p$-coverings.

\begin{thm}\label{thm:main1}
Let $d \geq 1$ be an integer.
Let $X_{\infty}/X$ be a (possibly ramified) $\Z_p^d$-covering of connected graphs, that is, a tower of connected graphs
\[
X = X_0 \leftarrow X_1 \leftarrow X_2 \leftarrow \cdots
\]
such that $X_n/X$ is a $(\Z/p^n\Z)^d$-covering.
We define non-negative integers $\lambda= \lambda(X_{\infty}/X)$ and $\mu = \mu(X_{\infty}/X)$ by
\[
\lambda = l_0(\Jac_{\Z_p} X_{\infty})
\AND
\mu = m_0(\Jac_{\Z_p} X_{\infty}),
\]
where $\Jac_{\Z_p} X_{\infty}$ denotes the Jacobian group (Definition \ref{defn:inf_PJ}) and $l_0(-)$ and $m_0(-)$ are module-theoretic invariants (Definition \ref{defn:CM_inv_mod}).
Then there exist rational numbers
\[
\lambda_1, \dots, \lambda_{d-1}, \mu_1, \dots, \mu_{d-1}, \nu
\]
such that
\[
\ord_p(\kappa_{X_n})
= (\lambda n + \mu p^n) p^{(d-1)n} + \sum_{i=1}^{d-1} (\lambda_i n + \mu_i p^n) p^{(d-1-i)n} + \nu
\]
holds for $n \gg 0$.
\end{thm}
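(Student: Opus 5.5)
The plan is to reduce the statement to a purely module-theoretic asymptotic formula over $\Lambda = \Z_p[[\Z_p^d]] \simeq \Z_p[[T_1,\dots,T_d]]$, in the style of Cuoco--Monsky, but with exact control in place of an $O(p^{(d-1)n})$ error term. Write $\Gamma = \Gal(X_\infty/X) \simeq \Z_p^d$, $\Gamma_n = \Gamma^{p^n}$, and $\omega_n$ for the augmentation ideal of $\Gamma_n$.

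\textbf{Step 1: a control relation at finite level.} I would first express $\ord_p(\kappa_{X_n}) = \ord_p \# \Jac X_n$ via a $\Lambda$-module presentation. The natural source is the Laplacian: the vertex module $\Z_p[[\Gamma]]\otimes \Z_p[V]$ and the edge module, with the Jacobian (or Picard) group of $X_\infty$ being the cokernel of a $\Lambda$-linear map between finitely generated modules. Ramification makes these modules induced from stabilizer subgroups rather than free. Taking $\Gamma_n$-coinvariants should recover $\Pic X_n\otimes\Z_p$ up to the degree component $\Z_p$. The outcome I aim for is
\[
\ord_p(\kappa_{X_n}) = \ord_p \#\bigl(\Jac_{\Z_p}X_\infty\bigr)_{\Gamma_n} + (\text{explicit correction}),
\]
where the correction comes from the ramified vertices and the augmentation. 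This correction is a finite sum of terms $\ord_p\#(\Lambda/\mathfrak a)_{\Gamma_n}$ for cyclic modules attached to the stabilizers $\Gamma_v \subset \Gamma$, and I expect it to be of the form $c\,n\,p^{(d-1)n}+\cdots$, or simply polynomial in $n$ and $p^n$.

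\textbf{Step 2: exact asymptotics of coinvariant orders.} For a finitely generated torsion $\Lambda$-module $M$ whose coinvariants $M_{\Gamma_n}$ are finite for all $n$, I would show that $\ord_p \# M_{\Gamma_n}$ is exactly a polynomial in $n$ and $p^n$ for $n\gg0$, with top term $(l_0(M) n + m_0(M)p^n)p^{(d-1)n}$. The argument goes through the structure theory up to pseudo-isomorphism, $M \sim \bigoplus \Lambda/(f_i)$. The point is to avoid the pseudo-null error, which is what caused the $O(p^{(d-1)n})$ term in earlier work. I would do this in two parts.

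First, for cyclic modules $\Lambda/(f)$, compute $\#(\Lambda/(f,\omega_n)) = \prod_{\zeta}|f(\zeta-1)|_p^{-1}$ over $d$-tuples of $p^n$-th roots of unity. Factor $f$ into $p^\mu$ times distinguished-type pieces. For each irreducible factor, the set of $\zeta$ where $f(\zeta-1)$ has given valuation is organized by cosets of subtori, via Monsky's theorem that such functions are "quasi-polynomials". This should yield exact polynomial expressions in $n$ and $p^{n}$, which requires the finiteness hypothesis: $f$ must not vanish at torsion points.

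Second, for pseudo-null kernels and cokernels, apply the same valuation-counting to the sizes of $H_0$ and $H_1(\Gamma_n,-)$ of modules supported in codimension $\ge2$. I would use induction on $d$ via a filtration by modules over quotients $\Z_p[[\Gamma/\Gamma']]$. On such a quotient, Monsky-type results give $\ord_p\#$ as a polynomial in $n,p^n$ of lower degree in $p^n$. Using the Euler characteristic of the Koszul complex, the alternating sum of homology orders is multiplicative in exact sequences, so all pieces assemble into a polynomial in $n$ and $p^n$.

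\textbf{Main obstacle.} The hardest step is Step 2 for pseudo-null modules, together with a ramification correction that is \emph{exactly} polynomial rather than merely bounded. For pseudo-null $N$, the individual homology groups $H_i(\Gamma_n,N)$ might be infinite when $N$ is not $\omega_n$-torsion-free in a suitable sense. I would first try to replace $\Jac$ by a module of projective dimension $\le1$ coming directly from the Laplacian presentation, a square matrix $\Delta$ over $\Lambda$ after tensoring. Then $\ord_p\kappa_{X_n}$ equals $\ord_p$ of a product of $\det\Delta(\zeta-1)$ over nontrivial characters $\zeta$, plus the trivial-character term given by the number of vertices of $X_n$ times $\kappa_X$. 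This bypasses pseudo-null error entirely: the whole statement becomes Monsky's theorem applied to $\det\Delta$ at torsion points, plus the elementary count of fixed points of ramified vertices. The invariants $\lambda,\mu$ are then read off as $l_0,m_0$ of $\Lambda/(\det\Delta)$, which coincide with those of $\Jac_{\Z_p}X_\infty$.
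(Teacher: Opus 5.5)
\textbf{There is a genuine gap: your final reduction does not work for ramified coverings, which is the new case in this theorem.}

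Steps 1--2 cannot carry the proof. They require exact polynomiality of the level-$n$ coinvariant orders for arbitrary finitely generated torsion (including pseudo-null) modules. You flag this yourself as the obstacle, and it is not available in that generality; it is close to Greenberg's open question recalled in the introduction. So everything rests on your last paragraph, and its key claim fails in the ramified case.

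There is no single square matrix $\Delta$ over $\Lambda$ whose values at characters compute $\kappa_{X_n}$. Since $\Div X_n \simeq \bigoplus_{v \in V_X} \Z[\Gamma_n/I_{v,n}]\, v$, the $\chi$-part only sees the vertices in $V^{\chi} = \{v \mid I_v \subset \Ker(\chi)\}$. By Lemma~\ref{lem:Lap_two} its determinant is $\bigl(\prod_{v \in V^{\chi}} \# I_{v,n}\bigr)\,\chi(\det \cL_{\alpha, V^{\chi}})$, and the size of this matrix jumps as $\chi$ varies.

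\begin{itemize}
\item The square presentation that does exist, $\cL_{\alpha, V^{\unr}}$ (presenting $\Pic^{\unr}_{\Z_p} X_{\infty}$), gives the right value only on characters that are nontrivial on every nontrivial $I_v$.
\item The full $\cL_{\alpha}$ is wrong on those generic characters.
\end{itemize}

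Concretely, in Example~\ref{eg:dum3} one has $\det \cL_{\alpha, V^{\unr}} = p$. For $\chi \neq \bm{1}$ trivial on $\sigma$, however, the relevant determinant is $\det \cL_{\alpha, V_X} = p(2-\tau-\tau^{-1})$. These characters contribute an extra $2n$, which cancels the $-2n$ and yields $\ord_p(\kappa_{X_n}) = p^{2n} + np^n$. Using $p$ uniformly would give $p^{2n}+np^n-2n$, contradicting Table~\ref{tab:eg_dum} at $n=1$. This discrepancy is invisible at leading order, but it lies exactly in the lower-order terms the theorem is about.

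Your correction term is also misstated. By Proposition~\ref{prop:det_L_compu} it is $\sum_{v} \ord_p(\# I_{v,n})\,\#(\Gamma_n/I_{v,n}) - dn + \ord_p(\kappa_X)$, not ``number of vertices of $X_n$ times $\kappa_X$''.

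\textbf{The missing idea is to stratify the characters by $V^{\chi}$.}

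\begin{itemize}
\item For $V^{\unr} \subset V \subset V_X$, each set $\{\chi \mid V^{\chi} = V\}$ is a Boolean combination of the subgroups $\widehat{\Gamma/I_v}$, hence semi-algebraic.
\item So Monsky's theorem in its semi-algebraic form (Theorem~\ref{thm:Mon}) applies separately to each of the finitely many $\det \cL_{\alpha, V}$ on its stratum.
\item The correction term above is visibly polynomial in $n$ and $p^n$ for $n \gg 0$.
\end{itemize}

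For the leading coefficients:

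\begin{itemize}
\item Every stratum with $V \neq V^{\unr}$ lies in some $\widehat{\Gamma/I_v}$ with $I_v \neq 1$, so it contributes only $O(p^{(d-1)n})$.
\item Hence only $\det \cL_{\alpha, V^{\unr}}$ matters, and Cuoco--Monsky gives its $(l_0, m_0)$.
\item The correction term adds $\sum_{\rank_{\Z_p} I_v = 1} \#(\Gamma/I_v)_{\tors}$ to the coefficient of $np^{(d-1)n}$.
\item The term $-dn$ accounts for $-\delta_{d,1}$.
\end{itemize}

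Finally, your claim that $l_0, m_0$ of $\det\Delta$ ``coincide'' with those of $\Jac_{\Z_p} X_{\infty}$ is false as stated. In Example~\ref{eg:dum1}, $\det \cL_{\alpha, V^{\unr}} = 1$ but $\lambda = 2$. The identification needs:
\begin{itemize}
\item $\cha(\Pic_{\Z_p} X_{\infty}) = \cha(\Pic^{\unr}_{\Z_p} X_{\infty}) \prod_{v \notin V^{\unr}} \cha(\Z_p[[\Gamma/I_v]])$;
\item $\cha(\Pic_{\Z_p} X_{\infty}) = \cha(\Jac_{\Z_p} X_{\infty})\cha(\Z_p)$;
\item the computation of $l_0(\Z_p[[\Gamma/I]])$.
\end{itemize}
These are Proposition~\ref{prop:compa_Pic_vol} and Lemmas~\ref{lem:PJ_compa} and~\ref{lem:l0_easy}.
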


We now review previous works (see Table \ref{tab:works}).
When $X_{\infty}/X$ is unramified and $d = 1$, Theorem \ref{thm:main1} specializes to the aforementioned result of Gonet \cite[Theorem 1.1]{Gon22} and McGown--Valli\`eres \cite[Theorem 6.1]{MV23}.
When $X_{\infty}/X$ is unramified and $d \geq 1$, the result is obtained independently by DuBose-Valli\`eres \cite[Theorem A]{DV23} and Kleine--M\"uller \cite[Theorem 4.3]{KlM24}.
Note that examples with non-integer coefficients already appear in \cite[Section 7]{DV23}.

\begin{table}[htbp]
\centering
\begin{tabular}{c | c c}
 & unramified & possibly ramified \\ \hline
$d = 1$ & \begin{tabular}{c} Gonet \cite{Gon22} \\ McGown--Valli\`eres \cite{MV23} \end{tabular} & Gambheera--Valli\`eres \cite{GV24}\\[1em]
$d \geq 1$ & \begin{tabular}{c} DuBose--Valli\`eres \cite{DV23} \\ Kleine--M\"uller \cite{KlM24} \end{tabular}& \begin{tabular}{c} Kundu--M\"uller \cite{KuM24_pre} (up to $O(p^{(d-1)n})$) \\ This paper \end{tabular}\\
\end{tabular}
\caption{Known results on Iwasawa-type asymptotic formulas}
\label{tab:works}
\end{table}

When ramification is allowed but $d = 1$, the result is proved by Gambheera--Valli\`eres \cite[Theorem A]{GV24}; in fact, the appropriate formalism of ramified coverings in this context is introduced there.
Finally, in the ramified case with $d \geq 1$, Kundu--M\"uller \cite{KuM24_pre} obtained a rougher asymptotic formula
\[
\ord_p(\kappa_{X_n})
= (\lambda n + \mu p^n) p^{(d-1)n} + O(p^{(d-1)n})
\]
with the same descriptions of $\lambda$ and $\mu$.
In the present paper, we refine the error term by showing that it is in fact given by a polynomial of $p^n$ and $n$.
This resolves one of the questions raised in the ``Future Directions'' section of \cite{KuM24_pre}.

The proof of Theorem \ref{thm:main1} crucially depends on the work of Monsky \cite{Mon81a} and Cuoco--Monsky \cite{CM81} on multi-variable $p$-adic power series (see Subsection \ref{ss:Mon}).
The theory was first applied to classical Iwasawa theory in \cite[Theorem I]{CM81}; for a $\Z_p^d$-extension of number fields, with $K_n$ denoting its $n$-th layer, they proved the asymptotic formula
\[
\ord_p(h_{K_n})
= (\lambda n + \mu p^n) p^{(d-1)n} + O(p^{(d-1)n}).
\]
As explained in \cite[Section 7]{CM81}, Greenberg raised the question of whether we can obtain the more refined polynomial-type formula without the error term.
While negative evidence is provided in \cite{CM81}, the question remains open.
On the other hand, Wan \cite{Wan19} proved that the polynomial-type formula holds for $\Z_p^d$-extensions of function fields.
From this perspective, the graph-theoretic analogue appears to be rather subtle, and Theorem \ref{thm:main1} provides an affirmative answer.

\subsection{Kida-type formula}\label{ss:thm3}

In classical Iwasawa theory, Kida's formula \cite{Kid80} describes the behavior of the $\lambda$-invariants for $p$-extensions between $\Z_p$-extensions, assuming $\mu = 0$ (more precisely, it concerns the minus components for CM-fields).
Subsequently, analogous formulas are discovered in other areas of Iwasawa theory; for example, for Selmer groups of elliptic curves by Hachimori--Matsuno \cite{HM99}.

We now consider a $p$-covering between $\Z_p^d$-coverings of connected graphs.
To be precise, let $X_{\infty}/X$ and $\wtil{X}_{\infty}/\wtil{X}$ be $\Z_p^d$-coverings of connected graphs with $X_n$ and $\wtil{X}_n$ their $n$-th layers.
We further assume that each $\wtil{X}_n$ is a $G$-covering of $X_n$, where $G$ is a fixed finite $p$-group, and that the actions of $G$ on $\wtil{X}_n$ are compatible as $n$ varies.
The situation is illustrated in the following diagram:
\[
\xymatrix{
\wtil{X}_0 \ar[d]_G & \wtil{X}_1 \ar[d]_G \ar[l] & \wtil{X}_2 \ar[d]_G \ar[l] & \cdots \ar[l]\\
X_0 & X_1 \ar[l] & X_2 \ar[l] & \cdots. \ar[l]
}
\]
The following is the second main theorem of this paper.

\begin{thm}\label{thm:main3}
In this situation, the following hold.
\begin{itemize}
\item[(1)]
We have $\mu(\wtil{X}_{\infty}/\wtil{X}) = 0$ if and only if both $\mu(X_{\infty}/X) = 0$ and $(\star)$ hold:
\begin{itemize}
\item[$(\star)$]
any vertex of $X$ unramified in $X_{\infty}/X$ remains unramified in $\wtil{X}_{\infty}/X$.
\end{itemize}

\item[(2)]
If the equivalent conditions in (1) hold, then we have
\begin{align*}
\lambda(\wtil{X}_{\infty}/\wtil{X}) + \delta_{d, 1}
& = [\wtil{X}_{\infty}: X_{\infty}] (\lambda(X_{\infty}/X) + \delta_{d, 1}) \\
& \quad - \sum_{v \in V_X} l_0(\Z_p[[\wtil{\Gamma}/\wtil{I}_v]]) (m_v(\wtil{X}_{\infty}/X_{\infty}) - 1),
\end{align*}
where
\begin{itemize}
\item
we set $\delta_{d, 1} = 1$ if $d = 1$ and $\delta_{d, 1} = 0$ if $d \geq 2$,
\item
$[\wtil{X}_{\infty}: X_{\infty}]$ denotes the degree of $\wtil{X}_{\infty}/X_{\infty}$, understood to be $\# G$,
\item
$v$ in the sum runs over the vertices of $X$,
\item
$\wtil{I}_v$ denotes the inertia group in the Galois group $\wtil{\Gamma}$ of $\wtil{X}_{\infty}/X$, 
\item
$l_0(-)$ is the module-theoretic invariant over $\Z_p[[\Gamma]]$, with $\Gamma$ the Galois group of $\wtil{X}_{\infty}/\wtil{X}$, and
\item
$m_v(\wtil{X}_{\infty}/X_{\infty})$ denotes the ramification index in $\wtil{X}_{\infty}/X_{\infty}$, understood to be the limit of the ramification indices $m_v(\wtil{X}_n/X_n)$.
\end{itemize}
\end{itemize}
\end{thm}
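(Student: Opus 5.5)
We would follow the classical module-theoretic strategy for Kida-type formulas (as in Iwasawa's or Hachimori--Matsuno's treatment). The key is an exact sequence describing $\Jac_{\Z_p}\wtil{X}_\infty$ as a $\Z_p[[\wtil{\Gamma}]]$-module, where $\wtil{\Gamma} = \Gal(\wtil{X}_\infty/X)$ contains $\Gamma \simeq \Z_p^d$ with finite index. Concretely, we would use the presentation of the Jacobian via the Laplacian on divisors: on the free modules of vertices and edges, modified by the inertia groups. The vertex module is $\bigoplus_{v \in V_X} \Z_p[[\wtil{\Gamma}/\wtil{I}_v]]$ and the edge module is free over $\Z_p[[\wtil{\Gamma}]]$. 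This gives an exact sequence of the shape
\[
0 \to (\text{edges}) \to (\text{vertices}) \to \Pic_{\Z_p}\wtil{X}_\infty \to \Z_p \to 0
\]
up to torsion corrections, with $\Jac$ the degree-zero part. Both sides are then treated as $\Z_p[[\Gamma]]$-modules, so $l_0$ and $m_0$ are additive on exact sequences up to pseudo-null modules. Restricting from $\wtil{\Gamma}$ to $\Gamma$ multiplies $l_0$ and $m_0$ of free modules by $[\wtil{\Gamma}:\Gamma] = \#G\cdot$(index of the base). The same Euler characteristic computation over $X_\infty/X$ computes $\lambda(X_\infty/X)$.

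The first step is an Euler-characteristic formula for $\lambda+\delta_{d,1}$ and $\mu$ in terms of graph data. Write $\Lambda = \Z_p[[\Gamma]]$. The term $\delta_{d,1}$ comes from the $\Z_p$ quotient, since $l_0(\Z_p)=1$ when $d=1$ and $0$ when $d\ge 2$, as $\Z_p$ is pseudo-null then. Free edge and vertex modules over $\Z_p[[\wtil\Gamma]]$ contribute $\#E\cdot[\wtil\Gamma:\Gamma]$ and $\#V\cdot[\wtil\Gamma:\Gamma]$ to $l_0$, and nothing to $m_0$. The ramified vertex terms contribute $l_0(\Z_p[[\wtil\Gamma/\wtil I_v]])$ and $m_0(\Z_p[[\wtil\Gamma/\wtil I_v]])$.

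For (1), note that $m_0(\Z_p[[\wtil\Gamma/\wtil I_v]])$ is always $0$, since it is $\Z_p$-torsion-free. So $\mu$ is governed by the $\mu$ of the cokernel, i.e.\ of the Laplacian determinant. We would compare via the norm and corestriction maps $\Jac_{\Z_p}\wtil X_\infty \to \Jac_{\Z_p} X_\infty$ and $G$-coinvariants. Since $G$ is a $p$-group, Nakayama-type arguments give the following: $\mu(\wtil X)=0$ forces $\mu(X)=0$, plus the vanishing of the contribution of vertices that are unramified in $X_\infty$ but ramify in $\wtil X_\infty$. Such a vertex makes $\wtil I_v$ finite, nontrivial, and $p$-power, which produces $\Z_p[\wtil I_v]$-type torsion with positive $m_0$ over $\Lambda$. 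Conversely, under $(\star)$, the inertia in $\wtil\Gamma$ is infinite wherever nontrivial. The reverse implication would use that $\mu=0$ is equivalent to the finite generation over $\Z_p[[\Gamma']]$ for a corank-one quotient after reduction mod $p$; this is detected on $G$-coinvariants by Nakayama's lemma for the $p$-group $G$.

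For (2), we subtract $[\wtil X_\infty:X_\infty]$ times the Euler-characteristic formula for $X_\infty/X$ from the one for $\wtil X_\infty/\wtil X$. The edge and vertex free parts scale exactly by $\#G$. What remains is $\sum_v \bigl(l_0(\Z_p[[\wtil\Gamma/\wtil I_v]]) - \#G\cdot l_0(\Z_p[[\Gamma_X/I_v]])\bigr)$. Since $[I_v^{\wtil\Gamma}\text{-image}] $ has index $m_v(\wtil X_\infty/X_\infty)$ and $\#G = m_v\cdot[\wtil\Gamma/\wtil I_v : \Gamma_X/I_v]$-type relation, this difference equals $-l_0(\Z_p[[\wtil\Gamma/\wtil I_v]])(m_v-1)$, giving the formula. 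The main obstacle is establishing the exact sequence/Euler characteristic formula with inertia and ensuring the error terms are pseudo-null. We would first try proving it directly from Definition \ref{defn:inf_PJ} by taking projective limits of the finite-level Laplacian sequences, using $\mu=0$ to control $\varprojlim^1$.
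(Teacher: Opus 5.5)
There is a genuine gap in part (2), which is where the content of the theorem lies: your ``Euler-characteristic formula'' for $\lambda+\delta_{d,1}$ does not exist.

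First, $l_0$ and $m_0$ are defined only for finitely generated \emph{torsion} $\Z_p[[\Gamma]]$-modules, so they are simply undefined on free edge and vertex modules. Second, your displayed sequence is not a presentation of $\Pic$. The edge-to-vertex boundary map is not injective in general (its kernel is the cycle space). The correct presentation uses the vertex-to-vertex Laplacian: $0\to\bigoplus_{v\in V^{\unr}}\Z_p[[\wtil\Gamma]]\to\bigoplus_{v\in V_X}\Z_p[[\wtil\Gamma/\wtil I_v]]\to\Pic_{\Z_p}\wtil X_\infty\to0$. Most fundamentally, $\lambda$ is not determined by counts of vertices, edges and inertia data. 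Its main term is $l_0(\det\cL_{\alpha,V^{\unr}})$; for instance, the three-loop bouquets in Table~\ref{tab:voltages} (with $p=2$, $d=2$) have $\lambda=0,1,2,3$. So ``the free parts scale exactly by $\#G$'' is not a computation. It restates what must be proved, namely $l_0(\Pic^{\unr}_{\Z_p}\wtil X_\infty)=\#G\cdot l_0(\Pic^{\unr}_{\Z_p}X_\infty)$ when $\mu=0$.

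The missing ingredient is the paper's Proposition~\ref{prop:Kida3}. Under $(\star)$ the sets $V^{\unr}$ coincide for both towers. Then $M=\Pic^{\unr}_{\Z_p}\wtil X_\infty$ is the cokernel of an injective endomorphism $\Phi$ of a free $\Z_p[[\Gamma]][G]$-module, and $M_G\simeq\Pic^{\unr}_{\Z_p}X_\infty$. After reducing to abelian $G$ via a central series, set $F=\det\Phi$. Then $\cha(M)=(N_G(F))$ and $\cha(M_G)=(\aug_G(F))$, and the Frobenius gives $\ol{N_G(F)}=\ol{\aug_G(F)}^{\#G}$ in $\F_p[[\Gamma]]$ (Lemma~\ref{lem:Kida1}). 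This yields both $m_0(M)=0\Leftrightarrow m_0(M_G)=0$ and $l_0(M)=\#G\,l_0(M_G)$.

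For $d=1$, an Iwasawa-style $\Z_p[G]$-projectivity argument could replace this. For $d\ge2$, however, the modules are not finitely generated over $\Z_p$, and you offer no substitute. Once the identity is in place, your closing bookkeeping is essentially the paper's last step: $l_0(\Z_p[[\wtil\Gamma/\wtil I_v]])=(G:G\cap\wtil I_v)\,l_0(\Z_p[[\Gamma/I_v]])$ with $m_v=\#(G\cap\wtil I_v)$.

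Part (1) is in better shape.
\begin{itemize}
\item For the forward direction, a finite nontrivial $\wtil I_v$ does produce the submodule $(\Z_p/(\#\wtil I_v)\Z_p)[[\Gamma]]$ of $(\Pic_{\Z_p}\wtil X_\infty)_G$ with positive $m_0$, as in the paper.
\item For the converse, you first need $(\Pic_{\Z_p}\wtil X_\infty)_G\simeq\Pic_{\Z_p}X_\infty$ under $(\star)$, which you do not establish.
\item Given that isomorphism, your Nakayama idea works, and the ``corank-one finite generation'' reformulation is unnecessary. The condition $m_0=0$ means the reduction mod $p$ is $\F_p[[\Gamma]]$-torsion. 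Because the augmentation ideal of $\F_p[G]$ is nilpotent, torsionness of the coinvariants lifts to the whole reduction.
\end{itemize}
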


We have a concrete description of $l_0(\Z_p[[\wtil{\Gamma}/\wtil{I}_v]])$ (see Remark \ref{rem:l0_easy3}).
Note that the $\Z_p[[\Gamma]]$-module $\Z_p[[\wtil{\Gamma}/\wtil{I}_v]]$ is not torsion if and only if $\wtil{I}_v$ is finite.
In this case, although $l_0(\Z_p[[\wtil{\Gamma}/\wtil{I}_v]])$ is not defined, condition $(\star)$ implies $m_v(\wtil{X}_{\infty}/X_{\infty}) = 1$, so we set
\[
l_0(\Z_p[[\wtil{\Gamma}/\wtil{I}_v]]) (m_v(\wtil{X}_{\infty}/X_{\infty}) - 1) = 0.
\]

We now review previous works (see Table \ref{tab:works_Kida}).
The first result on Kida's formula for graphs is obtained in the case of unramified $\Z_p$-coverings by Ray--Valli\`eres \cite[Theorem A]{RV25}.
Then Adachi--Mizuno--Tateno \cite[Theorem 1.2]{AMT26} generalized it to unramified $\Z_p^d$-coverings with $d \geq 1$ (and simultaneously to weighted graphs).
In these unramified cases, the formula becomes much simpler:
\[
\lambda(\wtil{X}_{\infty}/\wtil{X}) + \delta_{d, 1}
= [\wtil{X}_{\infty}: X_{\infty}] (\lambda(X_{\infty}/X) + \delta_{d, 1}).
\]
The author \cite[Theorem 1.1]{Kata_31} proved the formula for possibly ramified $\Z_p$-coverings; in particular, that work clarified the appearance of the condition $(\star)$ and the form of the ramification terms.
This paper establishes the general case, resolving another question raised in the ``Future Directions'' section of \cite{KuM24_pre}.

\begin{table}[htbp]
\centering
\renewcommand{\arraystretch}{1.3}
\begin{tabular}{c | c c}
 & unramified & possibly ramified \\ \hline
$d = 1$ & Ray--Valli\`eres \cite{RV25} & The author \cite{Kata_31}\\
$d \geq 1$ & Adachi--Mizuno--Tateno \cite{AMT26} & This paper \\
\end{tabular}
\caption{Known results on Kida-type formulas}
\label{tab:works_Kida}
\end{table}

\subsection{Possible values of $\lambda$- and $\mu$-invariants}\label{ss:thm2}

As the final topic, we study the possible values of the pair $(\lambda(X_{\infty}/X), \mu(X_{\infty}/X))$ when $X_{\infty}/X$ varies over $\Z_p^d$-coverings of connected graphs, with $p$ and $d$ fixed.

When $d = 1$, this problem was resolved by the author \cite{Kata_36}.
It is shown that $\lambda(X_{\infty}/X)$ is odd for any unramified $\Z_p$-covering of connected graphs and, conversely, that any pair $(\lambda, \mu)$ with $\lambda$ odd can be realized over a bouquet $X$ (i.e., a graph with a single vertex).
When $\lambda$ is even, such a pair $(\lambda, \mu)$ can instead be realized by a ramified $\Z_p$-covering over a graph $X$ with two vertices.

The final main theorem of this paper shows that, when $d \geq 2$, any pair $(\lambda, \mu)$ can be realized by an unramified $\Z_p^d$-covering of a bouquet, without any parity condition.
More formally:

\begin{thm}\label{thm:main2}
We fix a prime number $p$ and $d \geq 2$.
Then for any pair $(\lambda, \mu)$ of non-negative integers, there is an unramified $\Z_p^d$-covering $X_{\infty}/X$ of connected graphs such that
\[
\lambda(X_{\infty}/X) = \lambda
\AND
\mu(X_{\infty}/X) = \mu.
\]
Indeed, we construct such a covering with $X$ a bouquet.
\end{thm}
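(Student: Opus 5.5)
We will realize each pair $(\lambda,\mu)$ by an unramified $\Z_p^d$-covering of a bouquet $X$ with $t$ loops, encoded by a \emph{voltage assignment}: we choose $\gamma_1,\dots,\gamma_t\in\Gamma=\Z_p^d$ and let each loop $e_i$ lift to edges joining $x$ and $\gamma_i x$. We will use the standard presentation of the Jacobian for unramified coverings of a bouquet. With $\Lambda=\Z_p[[\Gamma]]=\Z_p[[T_1,\dots,T_d]]$, the characteristic ideal of $\Jac_{\Z_p}X_\infty$ is generated, up to the augmentation factor coming from $\Pic^0$ versus $\Pic$, by the element
\[
Q(T)=\sum_{i=1}^{t}\bigl(2-\gamma_i-\gamma_i^{-1}\bigr)\in\Lambda .
\]
For $d\ge 2$ the factor $\Z_p$ coming from the degree map is pseudo-null, which explains $\delta_{d,1}=0$. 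Hence $\lambda$ and $\mu$ are the invariants $l_0$ and $m_0$ of $\Lambda/(Q)$. If $Q=p^{\mu}\cdot(\text{unit})\cdot\prod f_j$, then $m_0=\mu$. The invariant $l_0$ is computed from the height-one primes $(f)$ with $\Lambda/(f)$ of ``dimension one over $\Z_p$'' in Monsky's sense. These are the primes of the form $(\zeta-1)$-type factors, i.e. $f$ coming from $\Z_p[[T_j]]$-directions after a change of variables. Each such prime contributes the $\lambda$-invariant of $f$ as a one-variable series. I will recall the precise definition of $l_0$ from Definition \ref{defn:CM_inv_mod} (the paper's Remark \ref{rem:l0_easy3} is relevant) and compute it on examples.

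The key steps, in order, are as follows. (i) Handle $\mu$ first. Taking loops with voltages $\gamma=\sigma^{p^k}$ produces terms $2-\sigma^{p^k}-\sigma^{-p^k}$, which are $\equiv -(\sigma-1)^{2p^k}\cdot\sigma^{-p^k}$ modulo $p$. Taking $p^m$-fold multiplicities of a single loop multiplies the contribution by $p^m$. So choose the loops so that $Q=p^{\mu}\,Q_0$ with $Q_0\not\equiv 0 \pmod p$. For example, use $p^{\mu}$ copies of each loop in a configuration whose $Q_0$ is known. This requires the contributions not to cancel; since $p^\mu$ times a sum with $\mu$-invariant zero has $\mu$-invariant $\mu$, multiplicity scales everything cleanly. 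Connectedness of every layer only requires that the voltages generate $\Gamma$, so we always include loops with voltages $\sigma_1,\dots,\sigma_d$ (the generators). (ii) Arrange $\lambda$. The basic building block is a case where $Q_0$ factors through a one-variable quotient. If all voltages lie in $\Gamma$ but the sum is divisible by a polynomial in $T_1$ alone, e.g. $(T_1)\cdot g$, then this prime contributes to $l_0$. For instance, a loop with voltage $\sigma_1$ gives $-T_1^2/(1+T_1)$, and the remaining generators' terms add a part not divisible by $T_1$. I plan to compute $l_0$ for $Q_0=\sum_j c_j(2-\sigma_j-\sigma_j^{-1})$ with suitable integer multiplicities $c_j$. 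Here $l_0$ should be computable by restricting to the ``boundary'' primes $\sigma^{p^n}$-cyclotomic ones, namely primes $(\Phi_{p^k}(\sigma^{a}))$ along rank-one directions $\sigma^a$. We want exactly $\lambda$ such contributions, counted with degree. Equivalently, via Theorem \ref{thm:main1}, $\lambda$ is the coefficient of $np^{(d-1)n}$ in $\ord_p(\kappa_{X_n})$. That coefficient can be read off from $\sum_{\chi}\ord_p Q(\chi)$ over characters of $\Gamma/\Gamma^{p^n}$. Each character whose restriction is trivial on a corank-one direction gives an $n$-proportional contribution.

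The main obstacle is step (ii): producing every value of $\lambda$, including parity-free ones, with $\mu$ prescribed simultaneously. I would first try the explicit family in which loops with voltages $\sigma_1$ have multiplicity $a$ and loops with voltages $\sigma_2,\dots,\sigma_d$ have multiplicity $p^{N}$ (large). Modulo $p^N$ we get $Q\equiv -aT_1^2/(1+T_1)$. This suggests the character sum along $\chi(\sigma_1)=1$ is dominated by the $\sigma_2,\ldots$ terms, and $l_0$ becomes a controlled quantity depending on $\ord_p a$. I would then tune $a$ and $N$, possibly adding a loop with voltage $\sigma_1\sigma_2$ to break parity, to get $\lambda$ exactly. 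Finally multiply all multiplicities by $p^\mu$, which shifts $\mu$ and leaves $\lambda$ unchanged, as in step (i). Verifying the exact $l_0$ computation via the explicit character sums is where I expect the real work to lie.
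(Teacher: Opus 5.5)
Your setup agrees with the paper. For a bouquet, $Q=\sum_i(2-\gamma_i-\gamma_i^{-1})=h_\alpha+\iota(h_\alpha)$ with $h_\alpha=\sum_i(1-\gamma_i)$, so $\lambda=l_0(Q)$ and $\mu=m_0(Q)$. Also, $\mu$ is handled by multiplying all multiplicities by $p^{\mu}$ once you have a configuration with $m_0(Q)=0$ and the desired $l_0$.

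The gap is that step (ii), which is the whole content of the theorem, is never carried out, and the mechanism you propose for it does not exist. By definition, $l_0(Q)$ is the number of factors $\gamma-1$ with $\gamma\in\Gamma\setminus\Gamma^p$, counted with multiplicity, in the reduction $\ol{p^{-m_0(Q)}Q}\in\F_p[[\Gamma]]$. It is not a count of height-one primes of $\Lambda$ along one-variable directions; for example, $(\sigma_1-1)+p\sigma_2$ has $l_0=1$. Consequently loop multiplicities matter only modulo $p$ at the lowest $p$-adic level present, and large $p$-power multiplicities cannot ``tune'' $\lambda$. Concretely, take your family $Q=a(2-\sigma_1-\sigma_1^{-1})+p^N\sum_{j\ge2}(2-\sigma_j-\sigma_j^{-1})$ with $a=p^ka'$ and $p\nmid a'$. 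Then $m_0(Q)=\min(k,N)$, and $\ol{p^{-m_0(Q)}Q}$ is:
\begin{itemize}
\item $-a'\sigma_1^{-1}(\sigma_1-1)^2$ if $k<N$;
\item $\sum_{j\ge2}(2-\sigma_j-\sigma_j^{-1})$ if $k>N$;
\item the sum of these two if $k=N$.
\end{itemize}
So $l_0$ takes only a couple of values (for $d=2$, only $0$ and $2$), while varying $a$ and $N$ merely moves $\mu$.

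What is missing are the three ingredients of the paper's argument.

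\emph{Reduction to characteristic $p$.} Any $\ol{h}\in\F_p[\Gamma]$ with $\aug(\ol{h})=0$ lifts to an admissible $h=h_\alpha$ by adding elements of $p\Z[\Gamma]$: subtract suitable $p\gamma$, then $p\sigma_1,\dots,p\sigma_d$, then $\aug(h)$. This leaves $\ol{h+\iota(h)}$ unchanged. It is also how the generator loops needed for connectedness are added harmlessly, with multiplicity divisible by $p$. Including them with multiplicity prime to $p$ changes $\ol{Q}$.

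\emph{Multiplicativity.} For $h=h_1(h_2+\iota(h_2))$ one has $h+\iota(h)=(h_1+\iota(h_1))(h_2+\iota(h_2))$ and $\aug(h)=2\aug(h_1)\aug(h_2)$. So $l_0$ is additive, and only $\lambda=0$ and $\lambda=1$ need to be realized.

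\emph{Explicit two-variable base cases valid for every $p$.} Examples are $h=\sigma^3\tau^2+\sigma-\tau-1$ for $\lambda=0$, and $h=\sigma^2\tau^2+\sigma-\tau-1$ for $\lambda=1$, where $h+\iota(h)=(1-\sigma^{-1}\tau^{-1})g$ with $l_0(g)=0$. Non-divisibility by every $\gamma-1$ is verified by projecting to $\lrang{\sigma,\tau}$, substituting $\tau=\sigma^{-\alpha/\beta}$ (or the symmetric substitution), and comparing exponents.

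Without a computation of this kind, a ``parity-breaking'' loop such as $\sigma_1\sigma_2$ gives no control. The values $0$ and odd $\lambda$ cannot come from an $h$ supported on a rank-one subgroup, where $l_0(h+\iota(h))$ is always positive and even.
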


When $d \geq 2$, both DuBose--Valli\`eres \cite[Section 7]{DV23} and Kleine--M\"uller \cite[Sections 8 and 9]{KlM24} construct some numerical examples.
However, no systematic construction of this kind seems to be known.

\subsection{Structure of this paper}

Sections \ref{sec:alg_pre} and \ref{sec:prelim} are preliminaries on algebra and graphs.
The three main theorems will be proved in Sections \ref{sec:proof}, \ref{sec:Kida}, and \ref{sec:invariants}, respectively.

\section{Preliminaries on algebra}\label{sec:alg_pre}

The first subsection is devoted to review of the results on multi-variable $p$-adic power series by Monsky \cite{Mon81a} and Cuoco--Monsky \cite[Section 1]{CM81}.
In particular, the $l_0$- and $m_0$-invariants of power series are defined.
In Subsection \ref{ss:cha}, we then review the definition of $l_0$- and $m_0$-invariants of modules, and show basic properties.

Throughout this paper, we fix a prime number $p$.
We fix an algebraic closure $\ol{\Q_p}$ of $\Q_p$.
We write $\ord_p: \ol{\Q_p}^{\times} \to \Q$ for the $p$-adic valuation normalized by $\ord_p(p) = 1$.

\subsection{The $p$-adic power series lemma}\label{ss:Mon}

Let $\Gamma$ be a profinite group that is isomorphic to $\Z_p^d$ with $d \geq 1$.

We set $\Gamma_n = \Gamma/\Gamma^{p^n} \simeq (\Z/p^n\Z)^d$ for each $n \geq 0$.
We define the completed group ring by $\Z_p[[\Gamma]] := \varprojlim_n \Z_p[\Gamma_n]$.
Let $\Z_p[[T_1, \dots, T_d]]$ be the $d$-variable formal power series ring.
Taking a basis $\sigma_1, \dots, \sigma_d$ of $\Gamma$, we have an isomorphism
\[
\Z_p[[\Gamma]] \simeq \Z_p[[T_1, \dots, T_d]]
\]
by sending $\sigma_i$ to $1 + T_i$ for each $i = 1, \dots, d$.
This is called Serre's isomorphism.

Let $\widehat{\Gamma}$ denote the group of finite characters of $\Gamma$ with values in $\ol{\Q_p}^{\times}$.
For each closed subgroup $H \subset \Gamma$, we regard the group $\widehat{\Gamma/H}$ of finite characters of $\Gamma/H$ as a subgroup of $\widehat{\Gamma}$.
In particular, $\widehat{\Gamma_n}$ is the subgroup of $\widehat{\Gamma}$ that is isomorphic to $(\Z/p^n\Z)^d$.

For each $\chi \in \widehat{\Gamma}$, the continuous group homomorphism $\chi: \Gamma \to \Imag(\chi) \subset \ol{\Q_p}^{\times}$ is naturally extended to a ring homomorphism
\[
\Z_p[[\Gamma]] \to \Z_p[\Imag(\chi)] \subset \ol{\Q_p},
\]
which is again denoted by $\chi$.
Then for each $f \in \Z_p[[\Gamma]]$ with $\chi(f) \neq 0$, we have a rational number $\ord_p(\chi(f)) \geq 0$.

Given a subset $S \subset \widehat{\Gamma}$ and an element $f \in \Z_p[[\Gamma]]$, we study the asymptotic behavior of the rational numbers
\[
\sum_{\substack{\chi \in S \cap \widehat{\Gamma_n} \\ \chi(f) \neq 0}} \ord_p(\chi(f))
\]
as $n \to \infty$.
Such a formula was obtained by Monsky \cite{Mon81a} when $S$ is semi-algebraic in the sense of \cite[Definition 3.1]{Mon81a}.
Instead of reviewing the precise definition of semi-algebraic sets, we mention the following properties that we need:
\begin{itemize}
\item
$\widehat{\Gamma/H}$ is semi-algebraic for any closed subgroup $H \subset \Gamma$, and
\item
the class of semi-algebraic sets are closed under finite unions, finite intersections, and complements.
\end{itemize}

\begin{thm}[{Monsky \cite[Theorem 5.6]{Mon81a}}]\label{thm:Mon}
Let $f \in \Z_p[[\Gamma]]$ be an element and $S \subset \widehat{\Gamma}$ a semi-algebraic subset.
Then there are rational numbers
\[
\lambda, \mu, \lambda_1, \mu_1, \dots, \lambda_{d-1}, \mu_{d-1}, \nu
\]
such that
\[
\sum_{\substack{\chi \in S \cap \widehat{\Gamma_n} \\ \chi(f) \neq 0}} \ord_p(\chi(f)) 
= (\lambda n + \mu p^n) p^{(d-1)n} + \sum_{i=1}^{d-1} (\lambda_i n + \mu_i p^n) p^{(d-1-i)n} + \nu
\]
holds for $n \gg 0$.
\end{thm}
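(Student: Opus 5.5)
Since Theorem \ref{thm:Mon} is quoted from \cite{Mon81a}, the plan is to follow Monsky's strategy rather than to find a new argument. The starting point is to factor $f = p^{\mu_0} g$ with $g \not\equiv 0 \pmod p$. This gives the decomposition
\[
\sum_{\chi} \ord_p(\chi(f)) = \mu_0 \cdot \#\{\chi \in S \cap \widehat{\Gamma_n} : \chi(g) \neq 0\} + \sum_{\chi} \ord_p(\chi(g)).
\]

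\emph{The first term.} It suffices to show that for a semi-algebraic set $S'$ the count $\#(S' \cap \widehat{\Gamma_n})$ is, for $n \gg 0$, a polynomial in $p^n$ of degree at most $d$ with rational coefficients.
\begin{itemize}
\item For $S' = \widehat{\Gamma/H}$ this is immediate. Writing $\Gamma/H \simeq \Z_p^{r} \times (\text{finite})$, we get $\#(\widehat{\Gamma/H} \cap \widehat{\Gamma_n}) = c\, p^{rn}$ for $n \gg 0$.
\item General semi-algebraic sets are built from such pieces (and their translates by torsion characters) by Boolean operations, so inclusion--exclusion gives the claim.
\item The zero locus $\{\chi : \chi(g) = 0\}$ is itself a finite union of cosets of such subgroups. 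This is the Serre/Monsky theorem on torsion points lying on a $p$-adic analytic hypersurface, which I take as the deepest input.
\end{itemize}
Together these produce the $\mu p^n p^{(d-1)n}$ and lower $\mu_i p^n p^{(d-1-i)n}$ terms.

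\emph{The second term.} Here $g$ is not divisible by $p$. I would stratify the characters by exact order $p^m$, $m \leq n$. For $\chi$ of exact order $p^m$, the values $\chi(\sigma_j) - 1$ have valuations in $\{0\} \cup \{1/(p^{k-1}(p-1)) : k \leq m\}$. The key step, and the main obstacle, is a structure lemma of the following form. After partitioning $\widehat{\Gamma}$ into finitely many semi-algebraic pieces (cosets of subgroups $\widehat{\Gamma/H}$ minus lower-dimensional ones), on each piece and for large $m$ one has
\[
\ord_p(\chi(g)) = \frac{a}{p^{m-1}(p-1)}
\]
for a constant $a$ depending only on the piece. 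Concretely, $a$ is determined by the lowest-degree monomial of the reduction $\bar g \in \F_p[[\Gamma/H]]$ in suitable coordinates adapted to the piece. To prove it, I would first change basis of $\Gamma$ (Monsky's reduction) so that $\bar g$ becomes regular in one variable. Then I would apply a Weierstrass-type argument and induct on $d$, controlling the characters near the exceptional cosets where the leading term degenerates by passing to the smaller quotient $\Gamma/H$.

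\emph{Summing.} Granting the lemma, on each piece the number of characters of exact order $p^m$ in $\widehat{\Gamma_n}$ is a linear combination of $p^{rm}$ with $r \leq d$. Each contributes $a/(p^{m-1}(p-1))$, so summing over $m \leq n$ gives geometric sums in $p^{(r-1)m}$. These evaluate to polynomials in $p^n$, except when $r = 1$, where the summand is constant in $m$ and produces a term linear in $n$. Piecing these together yields exactly the shape $(\lambda n + \mu p^n)p^{(d-1)n} + \sum_i (\lambda_i n + \mu_i p^n)p^{(d-1-i)n} + \nu$, with the $n$-linear terms coming only from one-dimensional strata of the subgroup filtration. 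The finitely many small $m$ are absorbed into $\nu$.
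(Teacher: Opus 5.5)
The paper gives no proof of this statement; it quotes Monsky. Your sketch therefore has to stand on its own, and it has a genuine gap at its central step. The structure lemma for $p\nmid g$ is false, and the summation built on it cannot produce the leading term.

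Take $d=2$ with basis $\sigma,\tau$, and $g=\sigma-1$. A character $\chi$ of exact order $p^m$ can have $\chi(\sigma)$ of any order $p^{m-k}$ with $0\le k\le m$. For $k<m$ this gives $\ord_p(\chi(g))=p^k/(p^{m-1}(p-1))$. So among characters of a fixed exact order the numerator takes unboundedly many values, and no finite partition of $\widehat{\Gamma}$ makes it constant.

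Consistently, $\sum_{\chi\in\widehat{\Gamma_n},\,\chi(g)\neq 0}\ord_p(\chi(g))=p^n\sum_{a=1}^n 1=np^n$, so $\lambda=l_0(\sigma-1)=1$. Your summation, by contrast, only produces terms $p^{(r-1)n}$ from $r$-dimensional pieces ($r\ge 2$) and a multiple of $n$ from one-dimensional pieces. It never produces $np^{(d-1)n}$ when $d\ge 2$, so it would force $\lambda=0$ for every $f$, contradicting Theorem~\ref{thm:Cuo-Mon}.

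The missing idea is that $\ord_p(\chi(g))$ is governed not by the order of $\chi$ but by the orders of $\chi(\gamma)$ for finitely many $\gamma\in\Gamma$. The needed description is piecewise in terms of functions like $\chi\mapsto\ord_p(\chi(\gamma)-1)$. The $np^{(d-1)n}$ term then comes from the identity $\sum_{\chi\in\widehat{\Gamma_n},\,\chi(\gamma)\neq 1}\ord_p(\chi(\gamma)-1)=np^{(d-1)n}$ for $\gamma\in\Gamma\setminus\Gamma^p$. This is exactly why $l_0$ counts the factors $\gamma-1$ of $\bar g$.

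Your pieces are also of the wrong kind. Take $p$ odd and $g=\sigma+\tau-2$, and restrict to the line $L_u=\{\chi:\chi(\tau)=\chi(\sigma)^u\}$ with $\chi(\sigma)$ of order $p^m$. Then $\ord_p(\chi(g))=1/(p^{m-1}(p-1))$ if $u\not\equiv -1\pmod p$, and $2/(p^{m-1}(p-1))$ if $u\equiv -1\pmod p$.

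For all but finitely many $u$, the line $L_u$ meets each of finitely many proper torsion cosets in a finite set. Hence every Boolean combination of those cosets contains, or misses, the large-order points of all such generic lines simultaneously. A correct stratification therefore needs order-comparison sets such as $\{\chi:\#\langle\chi(\sigma\tau)\rangle<\#\langle\chi(\sigma)\rangle\}$, which are not Boolean combinations of torsion cosets.

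This also undermines your treatment of the first term. There you assume without justification that every semi-algebraic set is such a Boolean combination. The paper deliberately does not state Monsky's definition, so the counting claim would have to be proved for the class actually used.

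Finally, the ``lowest-degree monomial of $\bar g$'' is the wrong invariant. The elements $\chi(\sigma_j)-1$ have different valuations (and valuation $\infty$, not $0$, when $\chi(\sigma_j)=1$). What is needed is a weighted, Newton-polygon-type degree, and cancellations among terms of equal weight are precisely what create these extra strata.
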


Note that Serre's isomorphism also gives rise to an isomorphism
\[
\F_p[[\Gamma]] := \varprojlim_n \F_p[\Gamma_n] 
\simeq \F_p[[T_1, \dots, T_d]].
\]
In particular, both $\Z_p[[\Gamma]]$ and $\F_p[[\Gamma]]$ are noetherian unique factorization domains.

\begin{defn}[{\cite[Definitions 1.1 and 1.2]{CM81}}]
For $f \in \F_p[[\Gamma]] \setminus \{0\}$, we define a non-negative integer $l_0(f)$ as
\[
l_0(f) = \sum_P \ord_P(f)
\]
where $P$ runs over the prime ideals of $\F_p[[\Gamma]]$ of the form $(\gamma - 1)$ with $\gamma \in \Gamma \setminus \Gamma^p$ and $\ord_P(-)$ denotes the exponent of (a generator of) $P$ in the prime decomposition of $f$.

For $f \in \Z_p[[\Gamma]] \setminus \{0\}$, we define non-negative integers $l_0(f)$ and $m_0(f)$ by
\[
p^{-m_0(f)} f \in \Z_p[[\Gamma]] \setminus p \Z_p[[\Gamma]]
\]
and $l_0(f) = l_0(\ol{p^{-m_0(f)} f})$, where $\ol{(-)}$ denotes the reduction to $\F_p[[\Gamma]]$.
\end{defn}

When $d = 1$, the $l_0$- and $m_0$-invariants of $f \in \Z_p[[\Gamma]]$ coincide with the usual $\lambda$- and $\mu$-invariants, respectively.

Using these $l_0$- and $m_0$-invariants, we can determine the leading coefficients in Theorem \ref{thm:Mon}:

\begin{thm}[{Cuoco--Monsky \cite[Theorem 1.7]{CM81}}]\label{thm:Cuo-Mon}
Let $f \in \Z_p[[\Gamma]] \setminus \{0\}$.
In Theorem \ref{thm:Mon}, if $S = \widehat{\Gamma}$, we have
\[
\lambda = l_0(f)
\AND
\mu = m_0(f),
\]
which are in particular non-negative integers.
\end{thm}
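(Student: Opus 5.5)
This statement is quoted from Cuoco--Monsky \cite[Theorem 1.7]{CM81}, so within this paper it would simply be cited. If I had to prove it, my plan is to reduce to the one-variable situation by slicing $\widehat{\Gamma_n}$ along cyclic directions.

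The first step is to split off the $m_0$-part. Write $f = p^{m_0(f)} g$ with $g \notin p\Z_p[[\Gamma]]$. For every $\chi \in \widehat{\Gamma_n}$ with $\chi(f) \neq 0$ we have $\ord_p(\chi(f)) = m_0(f) + \ord_p(\chi(g))$. Moreover $\chi(g) = 0$ for only $O(p^{(d-1)n})$ characters $\chi \in \widehat{\Gamma_n}$; this can be checked via Weierstrass preparation in a suitable variable. The term $m_0(f)$ therefore contributes $m_0(f)\, p^{dn} + O(p^{(d-1)n})$, which gives $\mu = m_0(f)$ once we show the sum for $g$ has zero $p^{dn}$-coefficient. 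This reduces everything to the case $m_0 = 0$, where the claim becomes
\[
\sum_{\chi \in \widehat{\Gamma_n}} \ord_p(\chi(g)) = l_0(\bar g)\, n\, p^{(d-1)n} + O(p^{(d-1)n}).
\]
Since Theorem \ref{thm:Mon} already supplies the polynomial form, it suffices to identify the leading asymptotics.

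The second step is to estimate valuations. For $\chi$ of exact order $p^k$, $\chi(g)$ lies in $\Z_p[\zeta_{p^k}]$, whose uniformizer $\pi_k = \zeta_{p^k} - 1$ has valuation $1/\varphi(p^k)$. Hence $\ord_p(\chi(g))$ is governed by the $\pi_k$-adic order of $\bar g$ evaluated along $\chi$. I would show that if $\bar g$ is not divisible by any $\gamma - 1$ with $\gamma$ in the "direction" of $\ker\chi$, then $\ord_p(\chi(g))$ is bounded by $C/\varphi(p^k)$ for most $\chi$. Summing over the $\asymp p^{dk}$ characters of exact order $p^k$ then gives $O(p^{(d-1)k})$ per level, and $O(p^{(d-1)n})$ in total. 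The factors $(\gamma - 1)$ with $\gamma \in \Gamma \setminus \Gamma^p$ behave differently. Such a factor vanishes on $\widehat{\Gamma/\overline{\langle\gamma\rangle}}$, and near that subgroup its valuations sum exactly as in the one-variable case, like $\lambda$ for $T$. Concretely, $\sum_{\chi} \ord_p(\chi(\gamma) - 1)$ over $\widehat{\Gamma_n}$ with $\chi(\gamma) \neq 1$ equals $p^{(d-1)n}$ times $\sum_{\zeta \neq 1,\, \zeta^{p^n} = 1} \ord_p(\zeta - 1) = n\, p^{(d-1)n}$. Each such prime factor thus contributes exactly $n\, p^{(d-1)n}$ plus lower-order terms, which produces $\lambda = l_0(g)$.

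The main obstacle is the second step: controlling the contribution of the cofactor $h$ with $\bar g = \prod (\gamma_i - 1)^{e_i} \cdot \bar h$, which is coprime to all such primes, and showing it is $O(p^{(d-1)n})$. The difficulty is that $\bar h$ may still vanish to high order at many characters, for example along non-linear curves. I would try the following. For each $\chi$, restrict $h$ to the one-parameter subgroup determined by $\chi$ via $\Z_p[[\Gamma]] \to \Z_p[[\Gamma/\ker\chi']]$ for suitable rank-one quotients. Then apply the one-variable Weierstrass bound: $\ord_p(\chi(h)) \le \lambda(\text{image})/\varphi(p^k)$ once $p^k$ exceeds the $\lambda$-invariant. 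A Monsky-type counting argument would then show that the image has bounded $\lambda$ for all but a negligible proportion of directions, precisely because $\bar h$ has no linear factor $\gamma - 1$.
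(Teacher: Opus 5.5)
You are right that the paper gives no argument here: the statement is imported from Cuoco--Monsky \cite[Theorem 1.7]{CM81} and used as a black box. As a self-contained proof, however, your sketch has a genuine gap, and it sits exactly at the step you call the main obstacle.

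Step 1 and the computation $\sum_{\chi}\ord_p(\chi(\gamma)-1)=np^{(d-1)n}$ are fine. The problem is the claim that the cofactor $\bar h$, coprime to every $\gamma-1$, contributes only $O(p^{(d-1)n})$. That claim is the real content of the theorem, and you only announce it.

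Your proposed mechanism is a uniform bound on $\lambda(h_H)$ for the images $h_H\in\Z_p[[\Gamma/H]]$ over rank-one quotients. It does work when $d=2$. There the kernel of $\F_p[[\Gamma]]\to\F_p[[\Gamma/H]]$ is the principal prime $(\gamma-1)$ with $H=\overline{\lrang{\gamma}}$. So $\lambda(h_H)=\dim_{\F_p}\F_p[[\Gamma]]/(\bar h,\gamma-1)$ is finite, since $\bar h\notin(\gamma-1)$. It is also locally constant in $\gamma$ (by Nakayama), hence bounded by compactness of the space of directions.

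For $d\geq 3$ the mechanism breaks down, because the kernel has height $d-1$ and $\bar h$ can lie in it without having any linear factor. For example, take $p$ odd, $\bar h=(\sigma_1-1)-(\sigma_2-1)^2$ and $H=\overline{\lrang{\sigma_1,\sigma_2}}$; then $\bar h$ dies in $\F_p[[\Gamma/H]]$.

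Now let $\chi$ have exact order $p^k$, with $\chi(\sigma_1)$ and $\chi(\sigma_2)$ of order $p^j$, where $1\leq j\leq k$.
\begin{itemize}
\item Every lift $H$ of $\chi$ has $\lambda(h_H)=p^{k-j}$.
\item Hence $\ord_p\chi(h)=1/\varphi(p^j)$, rather than $O(1/\varphi(p^k))$.
\item When $\chi(\sigma_1)=\chi(\sigma_2)=1$, the one-variable bound gives nothing at all.
\end{itemize}
Summed over $j\leq k\leq n$, these exceptional characters contribute a total of order $p^{(d-1)n}$. That is the same order as the error term you are trying to establish. So ``bounded $\lambda$ outside a negligible proportion of directions'' is not enough. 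You need a weighted count over neighbourhoods of the bad locus, in practice an induction on $d$ that treats nearly degenerate characters as characters of lower-rank quotients. That argument is what is missing.

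There is also a smaller gap in how you combine the factors. The factorization $\bar g=\prod_i(\gamma_i-1)^{e_i}\bar h$ exists only in $\F_p[[\Gamma]]$. Writing $g=\prod_i(\gamma_i-1)^{e_i}h+pk$, you get
\[
\ord_p\chi(g)=\sum_i e_i\ord_p(\chi(\gamma_i)-1)+\ord_p\chi(h)
\]
only when the right-hand side is $<1$. On the remaining characters the mod-$p$ data determine nothing. A typical case is all of $\widehat{\Gamma/\overline{\lrang{\gamma_i}}}$, where $\chi(g)\in p\Z_p[\Imag\chi]$ and the valuation can be arbitrarily large. So your assertion that each $\gamma_i-1$ contributes ``exactly'' $np^{(d-1)n}$ needs a separate bound on those characters. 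One way is to apply the $(d-1)$-variable case of the theorem to the image of $g$ in $\Z_p[[\Gamma/\overline{\lrang{\gamma_i}}]]$, which gives $O(p^{(d-1)n})$. This is a second place where induction on $d$ cannot be avoided.
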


\subsection{The $l_0$- and $m_0$-invariants of modules}\label{ss:cha}

To ease the notation, we set $\Lambda = \Z_p[[\Gamma]] \simeq \Z_p[[T_1, \dots, T_d]]$.

\begin{defn}\label{defn:CM_inv_mod}
For a finitely generated torsion $\Lambda$-module $M$, we define its characteristic ideal by
\[
\cha(M) = \prod_P P^{\length_{\Lambda_P}(M_P)},
\]
where $P$ runs over the height-one prime ideals of $\Lambda$, and $\Lambda_P$ and $M_P$ denote the localization of $\Lambda$ and $M$ at $P$, respectively.
Note that such prime ideals are nonzero and principal, so $\cha(M)$ is a nonzero principal ideal.

We then define non-negative integers $l_0(M)$ and $m_0(M)$ as the $l_0$- and $m_0$-invariants of any generator of $\cha(M)$.
\end{defn}

Note that characteristic ideals are usually defined via the structure theorem of $\Lambda$-modules up to pseudo-isomorphism, but our alternative definition is more convenient in this paper.

We will often use the following additivity with respect to exact sequences.

\begin{lem}
Let $0 \to M' \to M \to M'' \to 0$ be an exact sequence of finitely generated torsion $\Lambda$-modules.
Then we have $\cha(M) = \cha(M') \cha(M'')$.
In particular, we have
\[
l_0(M) = l_0(M') + l_0(M'')
\AND
m_0(M) = m_0(M') + m_0(M'').
\]
\end{lem}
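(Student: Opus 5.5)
The multiplicativity of characteristic ideals follows from additivity of length under localization. The two numerical identities then follow because $l_0$ and $m_0$ are additive on products of power series.

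\emph{Step 1: localize.} Fix a height-one prime $P$ of $\Lambda$. Localization is exact, so we get an exact sequence of $\Lambda_P$-modules
\[
0 \to M'_P \to M_P \to M''_P \to 0.
\]
Since $\Lambda$ is a noetherian UFD, $P$ is principal and $\Lambda_P$ is a discrete valuation ring. Each of $M'_P$, $M_P$, $M''_P$ is finitely generated and torsion over $\Lambda_P$, because torsion is preserved by localization. Hence each has finite length. Length is additive on short exact sequences of finite-length modules, so
\[
\length_{\Lambda_P}(M_P) = \length_{\Lambda_P}(M'_P) + \length_{\Lambda_P}(M''_P).
\]

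\emph{Step 2: only finitely many primes contribute.} $M_P \neq 0$ only for the finitely many height-one primes containing the annihilator of $M$, which is a nonzero ideal since $M$ is finitely generated torsion. The same holds for $M'$ and $M''$. So the products in Definition \ref{defn:CM_inv_mod} are finite, and taking the product over $P$ of the equality in Step 1 gives $\cha(M) = \cha(M')\cha(M'')$.

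\emph{Step 3: additivity of $l_0$ and $m_0$.} By Step 2, if $f'$ and $f''$ generate $\cha(M')$ and $\cha(M'')$, then $f = f'f''$ generates $\cha(M)$. It remains to show that $l_0$ and $m_0$ are additive on products.

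For $m_0$, write $f' = p^{a}g'$ and $f'' = p^{b}g''$ with $g', g'' \notin p\Lambda$. Since $p\Lambda$ is prime ($\Lambda/p\Lambda \simeq \F_p[[\Gamma]]$ is a domain), $g'g'' \notin p\Lambda$. Hence $m_0(f) = a + b = m_0(f') + m_0(f'')$.

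For $l_0$, this same computation shows $\ol{p^{-m_0(f)}f} = \ol{g'}\,\ol{g''}$ in $\F_p[[\Gamma]]$. Since $\F_p[[\Gamma]]$ is a UFD, each $\ord_P$ is additive on products, and therefore so is the sum defining $l_0$.

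We should also check that $l_0$ and $m_0$ do not depend on the choice of generator. A unit of $\Lambda$ is not divisible by $p$, and it reduces to a unit of $\F_p[[\Gamma]]$, so multiplying by it changes neither invariant.

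There is no real obstacle here. The one point needing care is Step 1, where we use that $\Lambda_P$ is a DVR and that the localized torsion modules have finite length.
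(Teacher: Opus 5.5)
Your proof is correct and follows the same route as the paper: exactness of localization at each height-one prime plus additivity of length over the DVR $\Lambda_P$. The paper states only that one-line argument; your version also spells out the finiteness of the product and the additivity of $l_0$ and $m_0$ on products of generators, which the paper leaves implicit.
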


\begin{proof}
This follows from the facts that localization is exact and that length is additive with respect to exact sequences.
\end{proof}

The following well-known proposition shows that the characteristic ideal coincides with the Fitting ideal if the projective dimension of $M$ is at most one.

\begin{prop}\label{prop:cha_Fitt}
Let $\varphi$ be an injective endomorphism of a finitely generated free $\Lambda$-module.
Let $M = \Cok(\varphi)$ be its cokernel.
Then we have $\cha(M) = (\det \varphi)$.
In particular, we have $l_0(M) = l_0(\det \varphi)$ and $m_0(M) = m_0(\det \varphi)$.
\end{prop}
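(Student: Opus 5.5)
The plan is to compute $\cha(M)$ locally at each height-one prime $P$ of $\Lambda$ and compare it with the $P$-adic valuation of $\det \varphi$. Write $\varphi: \Lambda^r \to \Lambda^r$; an injective endomorphism of a finitely generated free module is given by a square matrix $A$. Since $\Lambda$ is a domain and $\varphi$ is injective, $\det \varphi \neq 0$: injectivity persists after tensoring with the fraction field $Q$ (localization is exact), and an injective endomorphism of $Q^r$ has nonzero determinant. In particular $M \otimes Q = 0$, so $M$ is finitely generated torsion and $\cha(M)$ is defined.

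Fix a height-one prime $P$. Because $\Lambda$ is a noetherian UFD, $P = (\pi)$ is principal and $\Lambda_P$ is a discrete valuation ring with uniformizer $\pi$. Localization is exact, so $M_P = \Cok(\varphi_P)$, where $\varphi_P: \Lambda_P^r \to \Lambda_P^r$ is again injective and given by the same matrix $A$. Over the DVR $\Lambda_P$ we apply the Smith normal form: $A = U D V$ with $U, V \in GL_r(\Lambda_P)$ and $D = \mathrm{diag}(\pi^{e_1}, \dots, \pi^{e_r})$, where every $e_i$ is finite by injectivity. Then
\[
M_P \simeq \bigoplus_i \Lambda_P/(\pi^{e_i}),
\]
so $\length_{\Lambda_P}(M_P) = \sum_i e_i = \ord_P(\det D) = \ord_P(\det A)$, since $\det U$ and $\det V$ are units.

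Hence $\cha(M) = \prod_P P^{\ord_P(\det \varphi)}$. By unique factorization in $\Lambda$, this ideal equals $(\det \varphi)$: the only question is a unit factor, and an element of $\Lambda$ with no height-one prime divisors is a unit. The "in particular" part follows immediately from Definition \ref{defn:CM_inv_mod}, because $\det\varphi$ is a generator of $\cha(M)$.

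No step here is a serious obstacle. The points that need care are checking that $\det\varphi \neq 0$, so that $M$ is torsion, and noting that localization at height-one primes in a noetherian UFD yields DVRs, so the elementary divisor theorem is available.
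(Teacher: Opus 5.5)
Your proposal is correct and follows the paper's route. You localize at each height-one prime $P$, apply elementary divisors over the discrete valuation ring $\Lambda_P$ to get $\length_{\Lambda_P}(M_P) = \ord_P(\det \varphi)$, and conclude $\cha(M) = (\det \varphi)$ by unique factorization; your check that $\det \varphi \neq 0$, so that $M$ is torsion and $\cha(M)$ is defined, is a detail the paper leaves implicit.
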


\begin{proof}
Let $P$ be any height-one prime ideal of $\Lambda$.
Then $M_P$ is the cokernel of the endomorphism of the finitely generated free $\Lambda_P$-module induced by $\varphi$ via base change.
Since $\Lambda_P$ is a discrete valuation ring, we can apply the elementary divisor theory and obtain
\[
\length_{\Lambda_P}(M_P) = \ord_P(\det \varphi).
\]
Therefore,
\[
\cha(M) = \prod_P P^{\length_{\Lambda_P}(M_P)}
= \prod_P P^{\ord_P(\det \varphi)}
= (\det \varphi)
\]
as claimed.
\end{proof}

For later use, we compute the $l_0$- and $m_0$-invariants of an explicit module.

\begin{lem}\label{lem:l0_easy}
Let $I$ be a non-trivial closed subgroup of $\Gamma \simeq \Z_p^d$.
Then we have $m_0(\Z_p[[\Gamma/I]]) = 0$ and 
\[
l_0(\Z_p[[\Gamma/I]])
= \begin{cases} \# (\Gamma/I)_{\tors} & \text{if $\rank_{\Z_p} I = 1$}, \\ 0 & \text{if $\rank_{\Z_p} I \geq 2$}, \end{cases}
\]
where $(\Gamma/I)_{\tors}$ denotes the torsion subgroup of $\Gamma/I$.
\end{lem}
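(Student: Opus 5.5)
We compute the characteristic ideal of $M = \Z_p[[\Gamma/I]]$ by reducing to a cyclic presentation. The first step is to choose adapted coordinates. By the elementary divisor theorem for the finitely generated $\Z_p$-module $\Gamma$ and its closed subgroup $I$, there is a basis $\sigma_1, \dots, \sigma_d$ of $\Gamma$ and integers $0 \leq e_1 \leq \dots \leq e_r$ with $r = \rank_{\Z_p} I \geq 1$ such that $I$ is topologically generated by $\sigma_1^{p^{e_1}}, \dots, \sigma_r^{p^{e_r}}$. Then $(\Gamma/I)_{\tors} \simeq \prod_{i=1}^r \Z/p^{e_i}\Z$, so $\#(\Gamma/I)_{\tors} = p^{e_1 + \dots + e_r}$. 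Via Serre's isomorphism we have
\[
M \simeq \Lambda/(\omega_1, \dots, \omega_r), \qquad \omega_i = (1+T_i)^{p^{e_i}} - 1.
\]

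\textbf{Case $r \geq 2$.} Here $\omega_1$ and $\omega_2$ are coprime: they involve disjoint variables, and each is a product of distinct irreducibles in its own variable. Hence no height-one prime $P$ contains the ideal $(\omega_1, \dots, \omega_r)$, so $M_P = 0$ for every such $P$. Therefore $\cha(M) = \Lambda$, and both invariants vanish. To justify that no height-one prime contains both, we can argue that $\Lambda/(\omega_1,\omega_2)$ has dimension $d-1$, since $\omega_1,\omega_2$ is a regular sequence; a height-one prime containing both would force dimension $d-1 \geq d-1+\ldots$, which is a contradiction. Alternatively, $P = (g)$ with $g \mid \omega_1$ and $g \mid \omega_2$ contradicts coprimality in the UFD $\Lambda$.

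\textbf{Case $r = 1$.} Now $M = \Lambda/(\omega_1)$ is the cokernel of the injective multiplication by $\omega_1$ on $\Lambda$. By Proposition \ref{prop:cha_Fitt}, $\cha(M) = (\omega_1)$. Since $\omega_1 \not\equiv 0 \pmod p$, we get $m_0 = 0$. For $l_0$, reduce modulo $p$: in $\F_p[[\Gamma]]$ we have $\ol{\omega_1} = (\sigma_1 - 1)^{p^{e_1}}$. Because $\sigma_1 \in \Gamma \setminus \Gamma^p$, the element $\sigma_1 - 1$ generates a prime ideal $P$ of the required form, namely $T_1$. Hence $l_0(\omega_1) = \ord_P(\ol{\omega_1}) = p^{e_1} = \#(\Gamma/I)_{\tors}$. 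We must also check that no other prime $(\gamma - 1)$ divides $T_1^{p^{e_1}}$, which is clear since the prime factorization contains only $T_1$.

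The only mildly delicate point is the adapted-basis step, i.e.\ applying the elementary divisor theorem to the closed subgroup $I$ of the free $\Z_p$-module $\Gamma$. This is standard because $\Z_p$ is a PID and closed subgroups are $\Z_p$-submodules.
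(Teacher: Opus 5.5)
Your proof is correct and follows essentially the same route as the paper: an adapted basis of $\Gamma$; pseudo-nullity of $\Z_p[[\Gamma/I]]$ when $\rank_{\Z_p} I \geq 2$, which the paper simply asserts and you justify via coprimality of $\omega_1, \omega_2$; and, in rank one, $\cha = (\omega_1)$ with $\ol{\omega_1} = (\sigma_1 - 1)^{p^{e_1}}$. The only blemish is your garbled dimension sentence, which should read: a height-one prime $P \supset (\omega_1, \omega_2)$ would give $\dim \Lambda/(\omega_1,\omega_2) \geq \dim \Lambda/P = d > d-1$, a contradiction.
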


\begin{proof}
If $\rank_{\Z_p} I \geq 2$, then the module $\Z_p[[\Gamma/I]]$ is pseudo-null as a $\Z_p[[\Gamma]]$-module and we obtain $\cha(\Z_p[[\Gamma/I]]) = (1)$.
Suppose $\rank_{\Z_p} I = 1$ and set $p^m = \# (\Gamma/I)_{\tors}$.
We can take a basis $\sigma_1, \dots, \sigma_d$ of $\Gamma$ such that $\sigma_d^{p^m}$ is a basis of $I$. 
Then $\Z_p[[\Gamma/I]] \simeq \Z_p[[\Gamma]]/(\sigma_d^{p^m} - 1)$, so
\[
\cha(\Z_p[[\Gamma/I]]) = (\sigma_d^{p^m} - 1).
\]
After projection to the ring $\F_p[[\Gamma]]$, we have $\sigma_d^{p^m} - 1 = (\sigma_d - 1)^{p^m}$.
Therefore, we have $m_0(\sigma_d^{p^m} - 1) = 0$ and $l_0(\sigma_d^{p^m} - 1) = p^m$.
This completes the proof.
\end{proof}

\begin{rem}
When $d = 1$, the $l_0$- and $m_0$-invariants coincide with the usual $\lambda$- and $\mu$-invariants.
It is known that we have $\mu(M) = 0$ if and only if $M$ is finitely generated over $\Z_p$, and $\lambda(M)$ coincides with the $\Z_p$-rank of $M$.
These descriptions immediately imply Lemma \ref{lem:l0_easy} in the case $d = 1$.
\end{rem}

\section{Preliminaries on graphs}\label{sec:prelim}

The first two subsections are mainly devoted to a brief review of the basics of graph theory, following the author's article \cite[Sections 2 and 3]{Kata_31}.
In Subsection \ref{ss:Pic_der}, we will show formulas for the $l_0$- and $m_0$-invariants of the Picard groups and Jacobian groups that play a key role in the proofs of the main theorems.

\subsection{Graphs and Galois coverings}\label{ss:pre_graphs}

We begin with a formal definition of graphs.

\begin{defn}[{\cite[Definition 2.1]{Kata_31}}]
A graph $X$ is a tuple $(V_X, \bE_X, \ol{\cdot}, s, t)$, where 
\begin{itemize}
\item
$V_X$ and $\bE_X$ are finite sets (vertices and edges respectively),
\item
$\bE_X \to \bE_X, e \mapsto \ol{e}$ is an involution without fixed points, and 
\item
$s, t: \bE_X \to V_X$ are maps satisfying $s(\ol{e}) = t(e)$ and $t(\ol{e}) = s(e)$ for any $e \in \bE_X$.
\end{itemize}
\end{defn}

For each edge $e \in \bE$, the edge $\ol{e}$ is regarded as the opposite of $e$.
Although each edge $e \in \bE_X$ has a direction, we often identify two edges $e, \ol{e}$, making $X$ an undirected graph.

We have a natural notion of morphisms between graphs (\cite[Definition 2.3]{Kata_31}).
Formally, a morphism $f: Y \to X$ consists of maps $f_V: V_Y \to V_X$ and $f_{\bE}: \bE_Y \to \bE_X$ that are compatible with the maps $\ol{\cdot}, s, t$.

\begin{defn}[{\cite[Definition 2.5]{Kata_31}}]
Let $X$ be a graph and $\Gamma$ a finite group.
A $\Gamma$-covering $Y/X$ consists of a morphism $f: Y \to X$ of graphs and an action of $\Gamma$ on $Y$ over $X$ such that
\begin{itemize}
\item
for any $v \in V_X$, the group $\Gamma$ acts on $f_V^{-1}(v)$ transitively, and
\item
for any $e \in \bE_X$, the group $\Gamma$ acts on $f_{\bE}^{-1}(e)$ transitively and freely.
\end{itemize}
In this case, we say that $Y/X$ is a Galois covering with Galois group $\Gamma = \Gal(Y/X)$.
We define the degree of $Y/X$ by $[Y: X] = \# \Gamma$, which is equal to $\# f_{\bE}^{-1}(e)$ for any $e \in \bE_X$.
\end{defn}

\begin{defn}\label{defn:inertia}
Let $\Gamma$ be a finite group and $Y/X$ a $\Gamma$-covering.
For each $v \in V_X$, choosing a vertex $w$ of $Y$ lying above $v$ (i.e., a vertex that is sent to $v$ by the given morphism $Y \to X$), we define the inertia group $I_v(Y/X) \subset \Gamma$ to be the stabilizer of $w$.
Note that $I_v(Y/X)$ depends on the choice of $w$, so it is defined only up to conjugate.
We then define the ramification index by $m_v(Y/X) = \# I_v(Y/X)$, which is independent of the choice of $w$.
We say $v$ is unramified in $Y/X$ if $m_v(Y/X) = 1$.
\end{defn}

We now review how to construct Galois coverings.

\begin{defn}[{\cite[Definition 2.9]{Kata_31}}]\label{defn:der_gr}
Let $X$ be a graph and $\Gamma$ a finite group.
\begin{itemize}
\item
Let $\alpha: \bE_X \to \Gamma$ be a voltage assignment, which is by definition a map such that
\[
\alpha(\ol{e}) = \alpha(e)^{-1}
\]
holds for any $e \in \bE_X$ (\cite[Definition 2.7]{Kata_31}).
\item
Let $\cI = (I_v)_{v \in V_X}$ be a family of subgroups of $\Gamma$ indexed by the set of vertices of $X$.
\end{itemize}
Then we construct the derived graph $X(\Gamma, \alpha, \cI)$ by setting
\[
V_{X(\Gamma, \alpha, \cI)} = \coprod_{v \in V_X} (\Gamma/I_v \times \{v\}),
\quad
\bE_{X(\Gamma, \alpha, \cI)} = \Gamma \times \bE_X
\]
and
\[
\ol{(\gamma, e)} = (\gamma \alpha(e), \ol{e}),
\quad
s((\gamma, e)) = (\gamma I_{s(e)}, s(e)),
\quad
t((\gamma, e)) = (\gamma \alpha(e) I_{t(e)}, t(e))
\]
for any $(\gamma, e) \in \bE_{X(\Gamma, \alpha, \cI)}$.
\end{defn}

Then we have a natural morphism $f: X(\Gamma, \alpha, \cI) \to X$ (the projections to the second components) and a natural action of $\Gamma$ on $X(\Gamma, \alpha, \cI)$ (on the first components).
These structures make $X(\Gamma, \alpha, \cI)/X$ into a $\Gamma$-covering.
For each $v \in V_X$, we have $I_v(X(\Gamma, \alpha, \cI)/X) = I_v$ by choosing $(1_{\Gamma} I_v, v)$ as a vertex lying above $v$ (here $1_{\Gamma}$ denotes the identity element of $\Gamma$).
Conversely, any $\Gamma$-covering can be constructed in this manner, up to isomorphism (\cite[Proposition 2.11]{Kata_31}).

We now discuss the case where $\Gamma$ is a profinite group.
By definition, a $\Gamma$-covering $X_{\infty}/X$ of graphs consists of a family of graphs
\[
X_{\infty} = (X_U)_U,
\]
where $U$ runs over the open normal subgroups of $\Gamma$, such that $X_U/X$ is a $\Gamma/U$-covering and satisfies a certain compatibility condition (\cite[Definition 2.6]{Kata_31}).
For each $v \in V_X$, we define the inertia group by
\[
I_v(X_{\infty}/X) = \varprojlim_U I_v(X_U/X) \subset \Gamma,
\]
which depends on the choice of a compatible family of vertices lying above $v$.
Finally, as in Definition \ref{defn:der_gr}, given
\begin{itemize}
\item
a voltage assignment $\alpha: \bE_X \to \Gamma$ and
\item
a family $\cI = (I_v)_{v \in V_X}$ of closed subgroups of $\Gamma$,
\end{itemize}
we construct a $\Gamma$-covering of $X$ as
\[
X(\Gamma, \alpha, \cI) = (X(\Gamma/U, \alpha_U, \cI_U))_U
\]
(\cite[Definition 2.10]{Kata_31}).
Here, $\alpha_U$ and $\cI_U$ are naturally induced by $\alpha$ and $\cI$ via the projection map $\Gamma \to \Gamma/U$ (\cite[Definition 2.8]{Kata_31}).
We have $I_v(X(\Gamma, \alpha, \cI)/X) = I_v$ for each $v \in V_X$.
Conversely, any $\Gamma$-covering can be constructed in this manner (\cite[Proposition 2.12]{Kata_31}).

Suppose $\Gamma \simeq \Z_p^d$.
Given a voltage assignment $\alpha$ and a family $\cI = (I_v)_{v \in V_X}$, we write $\alpha_n$ and $\cI_n = (I_{v, n})_{v \in V_X}$ for $\alpha_{\Gamma^{p^n}}$ and $\cI_{\Gamma^{p^n}}$.
We then set $X_n = X(\Gamma_n, \alpha_n, \cI_n)$ and obtain a $\Z_p^d$-covering
\[
X = X_0 \leftarrow X_1 \leftarrow X_2 \leftarrow \cdots.
\]
Any $\Z_p^d$-covering can be constructed in this manner.

\subsection{Picard groups and Jacobian groups}\label{ss:PJ}

\begin{defn}[{\cite[Definition 2.2]{Kata_31}}]
For a graph $X$, we define its divisor group as the free $\Z$-module on the set $V_X$, that is,
\[
\Div X = \bigoplus_{v \in V_X} \Z v.
\]
We then define the Laplacian operator $\cL_X$ as the endomorphism of $\Div X$ such that
\[
\cL_X(v) = \sum_{\substack{e \in \bE_X \\ s(e) = v}} (s(e) - t(e))
\]
for each $v \in V_X$.
The Picard group $\Pic X$ is defined by
\[
\Pic X = \Coker(\cL_X) = \Div X / \Imag(\cL_X).
\]

Suppose that $X$ is connected.
We define the degree map $\deg_X: \Div X \to \Z$ as the $\Z$-homomorphism such that $\deg_X(v) = 1$ for each $v \in V_X$.
Then it is easy to see that $\deg_X \circ \cL_X$ is the zero map.
We now define the Jacobian group by
\[
\Jac X = \Ker(\deg_X)/\Imag(\cL_X).
\]
We also define a $\Z$-homomorphism $\iota_X: \Z \to \Div X$ by $\iota_X(1) = \sum_{v \in V_X} v$.
\end{defn}

By definition, we have an exact sequence
\[
0 \to \Jac X \to \Pic X \to \Z \to 0.
\]
It is also easy to see that the sequence
\[
0 \to \Z \overset{\iota_X}{\to} \Div X \overset{\cL_X}{\to} \Div X \overset{\deg_X}{\to} \Z \to 0
\]
is a complex.
Moreover, this complex is exact except for the $\Div X$ on the right, where the homology group is by definition the Jacobian group $\Jac X$ of $X$.

\begin{thm}[Kirchhoff's matrix-tree theorem]\label{thm:Kirch}
For a connected graph $X$, its Jacobian group $\Jac X$ is a finite abelian group whose order is equal to the complexity $\kappa_X$, which is by definition the number of spanning trees of $X$.
\end{thm}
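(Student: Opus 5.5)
We plan to reduce the statement to a determinant computation and then evaluate that determinant combinatorially via the Cauchy--Binet formula.

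\emph{Reduction to a reduced Laplacian.} Fix a vertex $v_0 \in V_X$ and set $V' = V_X \setminus \{v_0\}$. The kernel $\Ker(\deg_X)$ is free with basis $\{v - v_0\}_{v \in V'}$. Forgetting the $v_0$-coordinate gives an isomorphism $\pi: \Ker(\deg_X) \to \bigoplus_{v \in V'} \Z v$, since the $v_0$-coordinate of a degree-zero divisor is determined by the others. Because the complex above shows $\cL_X \circ \iota_X = 0$, we have $\cL_X(v_0) = -\sum_{v \in V'} \cL_X(v)$, so $\Imag(\cL_X)$ is generated by $\cL_X(v)$ for $v \in V'$. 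The images $\pi(\cL_X(v))$ are exactly the columns of the reduced Laplacian $L_0$, obtained from the matrix of $\cL_X$ by deleting the row and column indexed by $v_0$. Hence $\Jac X \simeq \Cok(L_0)$. By elementary divisor theory, this group is finite if and only if $\det L_0 \neq 0$, and in that case its order is $|\det L_0|$.

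\emph{Factorization of the Laplacian.} For each pair $\{e, \ol{e}\}$ with $s(e) \neq t(e)$, choose one orientation $e$. Loops contribute $s(e) - t(e) = 0$ to $\cL_X$, so they can be discarded. Let $B$ be the $V_X \times E$ incidence matrix whose column for $e$ is $s(e) - t(e)$. Then the matrix of $\cL_X$ equals $B B^{T}$, because each unoriented non-loop edge contributes the same rank-one term whichever orientation is chosen. Deleting the row of $v_0$ gives $B_0$ with $L_0 = B_0 B_0^{T}$.

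\emph{Cauchy--Binet.} Applying Cauchy--Binet gives
\[
\det L_0 = \sum_{S} \det(B_{0,S})^2,
\]
where $S$ runs over sets of $\# V' = \#V_X - 1$ edges and $B_{0,S}$ is the corresponding square submatrix. It then remains to show that $\det(B_{0,S}) = \pm 1$ if $S$ is the edge set of a spanning tree, and $0$ otherwise.
\begin{itemize}
\item If $S$ is not a spanning tree, then, since it has $\#V_X - 1$ edges, it contains a cycle. The signed sum of the columns along that cycle vanishes, so $\det(B_{0,S}) = 0$.
\item If $S$ is a spanning tree, we induct on the number of vertices by removing a leaf $w \neq v_0$ (a tree with at least two vertices has at least two leaves). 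The row of $w$ has a single nonzero entry $\pm 1$; expanding along that row gives $\pm 1$ times the determinant for the smaller tree.
\end{itemize}
Thus $\det L_0 = \kappa_X$. Connectedness of $X$ guarantees that a spanning tree exists, so $\kappa_X \geq 1$. Therefore $\Jac X$ is finite of order $\kappa_X$.

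The main obstacle is the careful bookkeeping in the reduction step: identifying $\Jac X$ with $\Cok(L_0)$, including why deleting the row and the column of $v_0$ loses nothing, and handling loops and multi-edges correctly in $\cL_X = B B^{T}$. The Cauchy--Binet step itself is routine once these are settled.
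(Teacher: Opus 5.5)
Your proof is correct. The paper gives no proof of this statement. It quotes Kirchhoff's theorem as a classical fact and uses it only as a black box, for instance in Proposition~\ref{prop:Jac_compa} to turn $\#\Jac Y/\#\Jac X$ into $\kappa_Y/\kappa_X$. So there is no competing route in the paper. What you supply is the standard reduced-Laplacian plus Cauchy--Binet argument.

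What it adds over the citation is a check that the classical theorem really applies to the paper's formalism. There, edges come in pairs $\{e,\ol{e}\}$, loops and multi-edges are allowed, and $\cL_X$ sums over both orientations. Your bookkeeping handles all of this correctly:
\begin{itemize}
\item A loop $e$ at $v$ and its opposite $\ol{e}$ each contribute $v-v=0$.
\item Each non-loop pair $\{e,\ol{e}\}$ contributes exactly the rank-one matrix $\beta_e\beta_e^{T}$ with $\beta_e=s(e)-t(e)$, independent of orientation, so the matrix of $\cL_X$ is $BB^{T}$.
\item Parallel edges give separate columns of $B$, matching the convention that $\kappa_X$ counts spanning trees as edge subsets.
\end{itemize}

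A few small points you could make explicit:
\begin{itemize}
\item The identity $\cL_X\circ\iota_X=0$ follows directly from $\sum_{e\in\bE_X}(s(e)-t(e))=0$, because the involution $e\mapsto\ol{e}$ interchanges $s$ and $t$.
\item The case $\#V_X=1$ is covered by the convention that the empty determinant is $1$, consistent with $\kappa_X=1$.
\item In the non-tree case a cycle may have length $2$ (two parallel edges). There the two columns are $\pm$ each other, and your signed-sum argument covers this.
\end{itemize}
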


Let us observe functorialities.

\begin{prop}\label{prop:functo}
Let $\Gamma$ be a finite group.
Let $Y/X$ be a $\Gamma$-covering of connected graphs.
\begin{itemize}
\item[(1)]
We define homomorphisms $f_*, f_{\sfr}: \Div Y \to \Div X$ by
\[
f_*(w) = f_V(w)
\AND
f_{\sfr}(w) = \# I_{f_V(w)}(Y/X) f_V(w)
\]
for $w \in V_Y$.
Then the following diagram is commutative.
\[
\xymatrix{
	0 \ar[r] & \Z \ar[r]^-{\iota_Y} \ar[d]_{\# \Gamma} & \Div Y \ar[r]^{\cL_Y} \ar[d]_{f_{\sfr}} & \Div Y \ar[r]^-{\deg_Y} \ar[d]_{f_*} & \Z \ar[r] \ar[d]_{\id} & 0\\
	0 \ar[r] & \Z \ar[r]_-{\iota_X} & \Div X \ar[r]_{\cL_X} & \Div X \ar[r]_-{\deg_X} & \Z \ar[r] & 0.
}
\]
\item[(2)]
We define homomorphisms $f^*, f^{\sfr}: \Div X \to \Div Y$ by
\[
f^*(v) = \sum_{w \in f_V^{-1}(v)} w
\AND
f^{\sfr}(v) = \# I_v(Y/X) \sum_{w \in f_V^{-1}(v)} w
\]
for $v \in V_X$.
Then the following diagram is commutative.
\[
\xymatrix{
	0 \ar[r] & \Z \ar[r]^-{\iota_Y} & \Div Y \ar[r]^{\cL_Y} & \Div Y \ar[r]^-{\deg_Y} & \Z \ar[r] & 0\\
	0 \ar[r] & \Z \ar[r]_-{\iota_X} \ar[u]_{\id} & \Div X \ar[r]_{\cL_X} \ar[u]_{f^*} & \Div X \ar[r]_-{\deg_X} \ar[u]_{f^{\sfr}} & \Z \ar[r] \ar[u]_{\# \Gamma} & 0.
}
\]
\end{itemize}
Even if $Y$ or $X$ is not connected, the homomorphisms $f_*, f_{\sfr}, f^*, f^{\sfr}$ are still defined, and the middle squares involving $\cL_Y$ and $\cL_X$ remain commutative.
\end{prop}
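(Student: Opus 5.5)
We verify each square by evaluating both composites on a basis element. The whole argument reduces to two counting facts about a $\Gamma$-covering $f\colon Y\to X$. Fix $v\in V_X$ and $w\in f_V^{-1}(v)$. First, by the orbit--stabilizer theorem, $\# f_V^{-1}(v) = \#\Gamma/\# I_v(Y/X)$. Second, $f_{\bE}$ restricts to a bijection from $\{e'\in\bE_Y : s(e')=w\}$ onto $\{e\in\bE_X : s(e)=v, \ \text{lifting } w\}$ counted with multiplicity $\# I_v(Y/X)$. More precisely, for each $e\in\bE_X$ with $s(e)=v$, the $\#\Gamma$ lifts of $e$ are permuted freely and transitively by $\Gamma$, and their sources are distributed evenly over $f_V^{-1}(v)$. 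Hence exactly $\#\Gamma/\# f_V^{-1}(v) = m_v$ of these lifts start at $w$. We get this second fact cleanly from the derived-graph model $X(\Gamma,\alpha,\cI)$: the edges $(\gamma,e)$ with source $(\gamma_0 I_v, v)$ are exactly those with $\gamma\in\gamma_0 I_v$.

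For (1), we compute $f_*\cL_Y(w) = \sum_{s(e')=w}(v - f_V(t(e')))$. Grouping the edges by their image $e = f_{\bE}(e')$ and using the second fact, this equals $m_v\sum_{s(e)=v}(v-t(e)) = m_v\,\cL_X(v) = \cL_X f_{\sfr}(w)$. For the left square, $f_{\sfr}\iota_Y(1) = \sum_v \# f_V^{-1}(v)\, m_v\, v = \#\Gamma\,\iota_X(1)$ by the first fact. The right square holds because $\deg_X f_* = \deg_Y$ is immediate.

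For (2), we compute $\cL_Y f^*(v) = \sum_{w\mapsto v}\sum_{s(e')=w}(w - t(e'))$. This is the sum over all $\#\Gamma$ lifts $e'$ of the edges $e$ with $s(e)=v$. We instead evaluate $f^{\sfr}\cL_X(v) = m_v\sum_{s(e)=v}\bigl(f^*(v) - (m_{t(e)}/m_v)\cdot(\text{stuff})\bigr)$; more carefully,
\[
f^{\sfr}\cL_X(v)=\sum_{s(e)=v}\bigl(m_v f^*(v) - m_{t(e)} f^*(t(e))\bigr).
\]
For a fixed $e$, as $e'$ runs over its $\#\Gamma$ lifts, the source $s(e')$ hits each vertex above $v$ exactly $m_v$ times, and the target $t(e')$ hits each vertex above $t(e)$ exactly $m_{t(e)}$ times; this is the second fact applied to $\ol{e}$. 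So the two expressions agree. The left square, $f^*\iota_X = \iota_Y$, is immediate. For the right square, $\deg_Y f^{\sfr}(v) = m_v\,\# f_V^{-1}(v) = \#\Gamma$.

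None of these computations uses connectivity; connectivity is needed only for the rows to be defined as stated, with $\deg$ and $\iota$. So the middle squares commute in general. The only real content, and the step I expect to need the most care, is the uniform-distribution claim about lifts of edges. I would prove it via transitivity and freeness of the $\Gamma$-action on $f_{\bE}^{-1}(e)$ together with $\Gamma$-equivariance of $s$ and $t$.
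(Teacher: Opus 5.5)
Your proof is correct and takes essentially the paper's approach. The paper only says both diagrams ``can be checked directly'' and cites the literature for (1); your two counting facts are exactly what that check requires: the orbit--stabilizer count $\# f_V^{-1}(v)=\#\Gamma/\# I_v(Y/X)$, and the fact that exactly $\# I_v(Y/X)$ lifts of each edge $e$ with $s(e)=v$ start at each vertex above $v$. The paper also notes a shortcut you could use: (2) is the $\Z$-linear transpose of (1) in the vertex bases, because $\cL_Y$ and $\cL_X$ are symmetric and $f^*$, $f^{\sfr}$, $\iota$ are the transposes of $f_*$, $f_{\sfr}$, $\deg$, so your second edge-counting computation can be replaced by this duality.
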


\begin{proof}
Both can be checked directly, and indeed they may be regarded as the $\Z$-linear duals of each other.
For (1), see \cite[Proposition 3.4]{GV24} or \cite[Proposition 3.1]{Kata_31}.
\end{proof}

\begin{defn}[{\cite[Definition 3.4]{Kata_31}}]\label{defn:inf_PJ}
Let $\Gamma$ be a profinite group and $X_{\infty} = (X_U)_U/X$ a $\Gamma$-covering of graphs.
We define a $\Z_p[[\Gamma]]$-module $\Pic_{\Z_p} X_{\infty}$ as the projective limit
\[
\Pic_{\Z_p} X_{\infty} = \varprojlim_U (\Z_p \otimes_{\Z} \Pic X_U)
\]
with respect to the maps induced by $f_*$ in Proposition \ref{prop:functo}(1).
Assuming that each $X_U$ is connected, we define a $\Z_p[[\Gamma]]$-module $\Jac_{\Z_p} X_{\infty}$ by
\[
\Jac_{\Z_p} X_{\infty} = \varprojlim_U (\Z_p \otimes_{\Z} \Jac X_U)
\]
in the same way
\end{defn}

\subsection{Picard groups of derived graphs}\label{ss:Pic_der}

To study the structure of the Picard group, it is convenient to use the formalism of derived graphs.
Let $X$ be a graph and $\Gamma$ a profinite group.
Let $\alpha: \bE_X \to \Gamma$ a voltage assignment and $\cI = (I_v)_{v \in V_X}$ a family of closed subgroups of $\Gamma$.

\begin{defn}\label{defn:Lap_vol}
We define a $\Z[\Gamma]$-endomorphism $\cL_{\alpha}$ of $\bigoplus_{v \in V_X} \Z[\Gamma] v$ by
\[
\cL_{\alpha}(v) = \sum_{\substack{e \in \bE_X \\ s(e) = v}} (s(e) - \alpha(e) t(e)).
\]
\end{defn}

We can describe the Laplacian operators of the derived graphs by using $\cL_{\alpha}$ as follows.

\begin{lem}\label{lem:Lap_two}
Suppose $\Gamma$ is finite and let $Y = X(\Gamma, \alpha, \cI)$ be the derived graph.
Note that we have a natural $\Z[\Gamma]$-isomorphism
\[
\Div Y \simeq \bigoplus_{v \in V_X} \Z[\Gamma/I_v] v.
\]
Then for each $v \in V_X$, we have
\[
\cL_Y(v) = \nu_{I_v} \ol{\cL_{\alpha}(v)},
\]
where $\ol{\cL_{\alpha}(v)} \in \Div Y$ denotes the reduction of $\cL_{\alpha}(v)$ and $\nu_{I_v}$ denotes the norm element of $I_v$.
\end{lem}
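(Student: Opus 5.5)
Since $\Div Y$ is free with basis the vertices and $\cL_Y$ is additive, it suffices to compute $\cL_Y$ on the generator of the summand $\Z[\Gamma/I_v] v$. Under the isomorphism $\Div Y \simeq \bigoplus_{v} \Z[\Gamma/I_v] v$, the vertex $(\gamma I_v, v)$ corresponds to $\gamma \cdot v$, and in particular $(1_\Gamma I_v, v)$ corresponds to $v$. The plan is to evaluate the defining formula
\[
\cL_Y(w) = \sum_{\substack{\tilde e \in \bE_Y \\ s(\tilde e) = w}} (s(\tilde e) - t(\tilde e))
\]
at $w = (1_\Gamma I_v, v)$ and compare it with $\ol{\cL_\alpha(v)}$.

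First we list the edges of $Y$ starting at $w$. By Definition \ref{defn:der_gr}, an edge $(\gamma, e) \in \Gamma \times \bE_X$ satisfies $s((\gamma, e)) = w$ exactly when $s(e) = v$ and $\gamma I_v = I_v$, that is, when $\gamma \in I_v$. So the edges starting at $w$ are the pairs $(\gamma, e)$ with $\gamma \in I_v$ and $s(e) = v$, and each such pair occurs exactly once.

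Next we compute the contribution of each such edge. We have $s((\gamma, e)) = w$, which corresponds to $v = \gamma v$ in $\Z[\Gamma/I_v] v$ because $\gamma \in I_v$. We also have $t((\gamma, e)) = (\gamma \alpha(e) I_{t(e)}, t(e))$, which corresponds to $\gamma \alpha(e) \, t(e)$ in $\Z[\Gamma/I_{t(e)}] t(e)$. Hence the edge $(\gamma, e)$ contributes $\gamma \cdot \ol{(s(e) - \alpha(e) t(e))}$, where the bar is reduction from $\bigoplus \Z[\Gamma] u$ to $\bigoplus \Z[\Gamma/I_u] u$. This reduction is a $\Z[\Gamma]$-module map, so we may pull $\gamma$ out of it.

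Summing over $\gamma \in I_v$ and over $e$ with $s(e) = v$ gives
\[
\sum_{\gamma \in I_v} \gamma \cdot \ol{\cL_\alpha(v)} = \nu_{I_v} \ol{\cL_\alpha(v)},
\]
which is the claim. The only point needing care is the bookkeeping in the first step: the bijection between edges at $w$ and $I_v \times \{e : s(e) = v\}$, together with checking that $\gamma$ acts trivially on the $s$-term. Neither step is a real obstacle.
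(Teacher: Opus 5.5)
Your proposal is correct and matches the paper's own direct computation. In both, the edges of $Y$ with source $(1_\Gamma I_v, v)$ are identified with the pairs $(\gamma, e)$ with $\gamma \in I_v$ and $s(e) = v$, the factor $\gamma$ is pulled out of each term $s - t$, and the sum over $I_v$ collects into the norm element $\nu_{I_v}$.
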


\begin{proof}
By direct computation, we have
\begin{align*}
\cL_Y(v) 
& = \sum_{\substack{(\gamma, e) \in \bE_Y \\ s((\gamma, e)) = (1_{\Gamma} I_v, v)}} (s((\gamma, e)) - t((\gamma, e)))\\
& = \sum_{\substack{e \in \bE_X \\ s(e) = v}} \sum_{\gamma \in I_v} ((\gamma I_v, s(e)) - (\gamma \alpha(e) I_{t(e)}, t(e)))\\
& = \nu_{I_v} \sum_{\substack{e \in \bE_X \\ s(e) = v}} ((1_{\Gamma} I_v, s(e)) - (\alpha(e) I_{t(e)}, t(e)))
 = \nu_{I_v} \ol{\cL_{\alpha}(v)}
\end{align*}
as claimed.
\end{proof}

In the rest of this subsection, we consider the case where $\Gamma \simeq \Z_p^d$ with $d \geq 1$ and the derived graph $X_{\infty} = X(\Gamma, \alpha, \cI)$.
We introduce some more notation.

\begin{defn}\label{defn:V0Vur_vol}
Suppose $\Gamma \simeq \Z_p^d$.
We set
\[
V^{\unr} := \{v \in V_X \mid I_v = \{1_{\Gamma}\}\}.
\]
We then define an endomorphism $\cL_{\alpha, V^{\unr}}$ of $\bigoplus_{v \in V^{\unr}} \Z[\Gamma] v$ as the $V^{\unr}$-component of $\cL_{\alpha}$, that is, the composite map
\[
\bigoplus_{v \in V^{\unr}} \Z[\Gamma] v
\hookrightarrow \bigoplus_{v \in V_X} \Z[\Gamma] v
\overset{\cL_{\alpha}}{\to} \bigoplus_{v \in V_X} \Z[\Gamma] v
\twoheadrightarrow \bigoplus_{v \in V^{\unr}} \Z[\Gamma] v,
\]
where the first and the last maps are the natural inclusion and the projection maps, respectively.
We then define a $\Z_p[[\Gamma]]$-module $\Pic_{\Z_p}^{\unr} X_{\infty}$ as the cokernel of the endomorphism of $\bigoplus_{v \in V^{\unr}} \Z_p[[\Gamma]] v$ induced by $\cL_{\alpha, V^{\unr}}$ via base change.
\end{defn}

\begin{prop}\label{prop:compa_Pic_vol}
Suppose $\Gamma \simeq \Z_p^d$.
Then $\Pic_{\Z_p} X_{\infty}$ and $\Pic^{\unr}_{\Z_p} X_{\infty}$ are both finitely generated torsion $\Z_p[[\Gamma]]$-modules and we have
\[
\cha(\Pic_{\Z_p} X_{\infty})
= \cha(\Pic^{\unr}_{\Z_p} X_{\infty}) \prod_{v \in V_X \setminus V^{\unr}} \cha(\Z_p[[\Gamma/I_v]]).
\]
In particular, we obtain
\begin{align*}
l_0(\Pic_{\Z_p} X_{\infty})
& = l_0(\Pic^{\unr}_{\Z_p} X_{\infty}) + \sum_{v \in V_X \setminus V^{\unr}} l_0(\Z_p[[\Gamma/I_v]])\\
& = l_0(\det \cL_{\alpha, V^{\unr}}) + \sum_{\substack{v \in V_X \\ \rank_{\Z_p} I_v = 1}} \#(\Gamma/I_v)_{\tors}
\end{align*}
and
\[
m_0(\Pic_{\Z_p} X_{\infty}) 
= m_0(\Pic^{\unr}_{\Z_p} X_{\infty})
= m_0(\det \cL_{\alpha, V^{\unr}}).
\]
\end{prop}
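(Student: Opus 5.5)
We first identify $\Pic_{\Z_p} X_{\infty}$ with a cokernel over $\Lambda = \Z_p[[\Gamma]]$, and then split it using the block structure of the Laplacian with respect to $V^{\unr}$ and $V_X \setminus V^{\unr}$.

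\textbf{Step 1: a presentation of $\Pic_{\Z_p} X_{\infty}$.} By Lemma \ref{lem:Lap_two}, for each $n$ we have
\[
\Z_p \otimes \Pic X_n \simeq \Cok\Bigl(\bigoplus_{v} \Z_p[\Gamma_n/I_{v,n}] v \xrightarrow{\ \nu_{I_{v,n}} \ol{\cL_{\alpha}}\ } \bigoplus_{v} \Z_p[\Gamma_n/I_{v,n}] v\Bigr).
\]
The transition maps $f_*$ are the natural projections. Each term is a finitely generated, hence compact, $\Z_p$-module, so $\varprojlim$ is exact on these sequences. This gives
\[
\Pic_{\Z_p} X_{\infty} \simeq \Cok\Bigl(\bigoplus_v \Z_p[[\Gamma/I_v]] v \xrightarrow{\ \Phi\ } \bigoplus_v \Z_p[[\Gamma/I_v]] v\Bigr),
\]
where $\Phi(v) = \ol{\cL_\alpha(v)}$ for $v \in V^{\unr}$. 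For ramified $v$, we need the limit of $\nu_{I_{v,n}}\ol{\cL_\alpha(v)}$ under the $f_*$-compatible system. The main obstacle is controlling these norm elements for ramified $v$.

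\textbf{Step 2: the ramified generators.} The maps $f_*$ send $\nu_{I_{v,n+1}}$ to $[I_{v,n+1}:I_{v,n}]\,\nu_{I_{v,n}}$ once the target $\Z_p[\Gamma_n/I_{v,n}]$ is reached, and $\#I_{v,n} \to \infty$ when $I_v \neq 1$. So the image of $v$ in the limit should be zero, or at least lie in $p^k$ times the module for every $k$. I would argue at finite level instead. The submodule generated by the $\cL_{X_n}(w)$, with $w$ over ramified $v$, equals $\#I_{v,n}$ times the span of the images $\ol{\cL_\alpha(v)}$ in $\Div X_n$. Hence in $\varprojlim$ the relations coming from ramified vertices vanish, up to an error killed by $\#I_{v,n}$, which tends to $0$ $p$-adically.

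This yields an exact sequence
\[
0 \to \bigoplus_{v \notin V^{\unr}} \Z_p[[\Gamma/I_v]] \to \Pic_{\Z_p} X_{\infty} \to \Pic^{\unr}_{\Z_p} X_{\infty} \to 0,
\]
which comes from the block-triangular shape of the relation matrix. The surjection is the projection to the $V^{\unr}$-components, and its kernel is generated by the ramified vertices, which are now free of relations. The precise statement is that the relation module is spanned by the $\ol{\cL_\alpha(v)}$ with $v\in V^{\unr}$ only. Their images in the ramified coordinates then give the off-diagonal block, which does not affect the quotient structure. Exactness on the left needs care. An alternative is to tolerate a kernel that is pseudo-null or finite, which does not change $\cha$.

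\textbf{Step 3: torsion and the invariants.} The module $\Pic^{\unr}_{\Z_p} X_{\infty}$ is the cokernel of $\cL_{\alpha,V^{\unr}}$ on a free module. Its determinant is nonzero: under the augmentation-free character evaluation it becomes a weighted Laplacian with some loss. More concretely, modulo $(\gamma-1)$, using $I_v \neq 1$ for some $v$ or connectivity, the matrix is nonsingular. So Proposition \ref{prop:cha_Fitt} gives $\cha(\Pic^{\unr}_{\Z_p} X_{\infty}) = (\det\cL_{\alpha,V^{\unr}})$. In the fully unramified case, $\det\cL_\alpha$ vanishes at the trivial character, and one must instead check that it is nonzero as a power series via a nontrivial character, using connectivity of the $X_n$.

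Each $\Z_p[[\Gamma/I_v]]$ with $I_v\ne1$ is torsion. Additivity of $\cha$ then gives the product formula. Lemma \ref{lem:l0_easy} evaluates $l_0$ and $m_0$ of $\Z_p[[\Gamma/I_v]]$, which yields the stated formulas for $l_0(\Pic_{\Z_p} X_{\infty})$ and $m_0(\Pic_{\Z_p} X_{\infty})$.
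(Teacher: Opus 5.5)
Your plan has the same shape as the paper's proof: a limit presentation in which only unramified vertices carry relations, the snake lemma against the projection to the $V^{\unr}$-components, then Proposition \ref{prop:cha_Fitt}, Lemma \ref{lem:l0_easy} and additivity. There is a genuine gap, though: the central step, that presentation itself, is not established.

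In Step 1 you use $f_*$ as the transition map on both the source and the target of $\cL_{X_n}$. That is not a morphism of complexes. Proposition \ref{prop:functo}(1) gives $f_* \circ \cL_{X_{n+1}} = \cL_{X_n} \circ f_{\sfr}$, and $f_{\sfr}$ differs from $f_*$ by the factor $\# I_{v,n+1}/\# I_{v,n}$ on the component of $v$. So there is no inverse system of exact sequences to which exactness of $\varprojlim$ applies. Likewise, ``the limit of $\nu_{I_{v,n}} \ol{\cL_{\alpha}(v)}$'' is undefined, since these elements are compatible only up to those factors.

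The finite-level claim in Step 2 is false. The relations over a ramified $v$ span $\Z_p[\Gamma_n]\, \nu_{I_{v,n}} \ol{\cL_{\alpha}(v)}$, and $\nu_{I_{v,n}}$ acts as the scalar $\# I_{v,n}$ only on components $u$ with $I_{v,n} \subset I_{u,n}$. For instance, in Example \ref{eg:dum1} the $v_2$-component of $\cL_{X_n}(v_1)$ is $-\nu_{I_{v_1,n}}$ viewed in $\Z[\Gamma_n/I_{v_2,n}]$, on which $I_{v_1,n}$ acts freely. It is a sum of $p^n$ distinct basis vectors and is not even divisible by $p$. The divisibility you want is a property of the transition maps, not of the relations at a fixed level. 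So ``vanish, up to an error killed by $\# I_{v,n}$'' remains an assertion.

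The missing idea is to take the limit of the source along $f_{\sfr}$, as the paper does. The sequences $0 \to \Z_p \to \Div X_n \to \Div X_n \to \Z_p \otimes \Pic X_n \to 0$ form an inverse system with transition maps multiplication by $p^d$, $f_{\sfr}$, $f_*$, and the induced map. All terms are finitely generated $\Z_p$-modules, so $\varprojlim$ is exact. On the component of a ramified $v$, $f_{\sfr}$ is $p^{\rank I_v}$ times the projection for $n \gg 0$. Hence that component of the limit lies in $\bigcap_k p^k \Z_p[\Gamma_n/I_{v,n}] = 0$, and similarly the limit of the $\Z_p$'s is $0$. This yields $0 \to \bigoplus_{v \in V^{\unr}} \Lambda \to \bigoplus_{v \in V_X} \Z_p[[\Gamma/I_v]] \to \Pic_{\Z_p} X_{\infty} \to 0$, with injectivity on the left for free. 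A rank count shows $\Pic_{\Z_p} X_{\infty}$ is torsion. Hence so is its quotient $\Pic^{\unr}_{\Z_p} X_{\infty}$, so $\cL_{\alpha, V^{\unr}}$ is injective, and the snake lemma gives the short exact sequence with no left-exactness issue.

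Your Step 3 is a legitimate alternative source of that injectivity. At the trivial character, $\cL_{\alpha, V^{\unr}}$ becomes the principal $V^{\unr}$-submatrix of the Laplacian of the connected graph $X$, which is nonsingular when $V^{\unr} \neq V_X$. In the fully unramified case, connectivity of the $X_n$ forces $\chi(\det \cL_{\alpha}) \neq 0$ for nontrivial $\chi$. Once the presentation is correct, this does close left exactness via the snake lemma. You never connect it to Step 2, however, and the pseudo-null fallback you offer instead is neither justified nor needed. With Steps 1--2 repaired as above, the rest of your argument goes through.
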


\begin{proof}
In essence, we generalize the argument in \cite[Proof of Theorem 4.3]{Kata_31} (where our $\Pic_{\Z_p}^{\unr} X_{\infty}$ is denoted by $\Pic_{\Z_p}' X_{\infty}$).
Since $\Div X_n \simeq \bigoplus_{v \in V_X} \Z[\Gamma_n/I_{v, n}] v$, the definition of the Picard group yields an exact sequence
\[
0 \to \Z_p \to \bigoplus_{v \in V_X} \Z_p[\Gamma_n/I_{v, n}] v \overset{\cL_{X_n}}{\to} \bigoplus_{v \in V_X} \Z_p[\Gamma_n/I_{v, n}] v \to \Z_p \otimes_{\Z} \Pic X_n \to 0.
\]
Note that the component of $\cL_{X_n}$ on $\bigoplus_{v \in V^{\unr}} \Z_p[\Gamma_n]$ coincides with $\cL_{\alpha_n, V^{\unr}}$ by Lemma \ref{lem:Lap_two}.
By the description of the transition homomorphisms in Proposition \ref{prop:functo}(1), taking projective limits yields an exact sequence
\[
0 \to \bigoplus_{v \in V^{\unr}} \Z_p[[\Gamma]] v \overset{\cL_{X_{\infty}}}{\to} \bigoplus_{v \in V_X} \Z_p[[\Gamma/I_v]] v \to \Pic_{\Z_p} X_{\infty} \to 0,
\]
where $\cL_{X_{\infty}}$ denotes the induced homomorphism (see \cite[Proposition 3.5(1)]{Kata_31}).
This shows that $\Pic_{\Z_p} X_{\infty}$ is finitely generated and torsion over $\Z_p[[\Gamma]]$.
Moreover, the composition of $\cL_{X_{\infty}}$ with the projection to the $V^{\unr}$-component coincides with $\cL_{\alpha, V^{\unr}}$.
Then the snake lemma yields an exact sequence
\[
0 \to \bigoplus_{v \in V_X \setminus V^{\unr}} \Z_p[[\Gamma/I_v]] v
\to \Pic_{\Z_p} X_{\infty} 
\to \Pic_{\Z_p}^{\unr} X_{\infty}
\to 0.
\]
This implies the formula for the characteristic ideals.
The rest of the statement follows from Proposition \ref{prop:cha_Fitt} and Lemma \ref{lem:l0_easy}.
\end{proof}

\begin{lem}\label{lem:PJ_compa}
Suppose $\Gamma \simeq \Z_p^d$.
Suppose each $X_n$ is connected.
Then we have
\[
\cha(\Pic_{\Z_p} X_{\infty}) = \cha(\Jac_{\Z_p} X_{\infty}) \cha(\Z_p).
\]
In particular, we obtain
\[
l_0(\Pic_{\Z_p} X_{\infty}) = l_0(\Jac_{\Z_p} X_{\infty}) + \delta_{d, 1}
\]
and $m_0(\Pic_{\Z_p} X_{\infty}) = m_0(\Jac_{\Z_p} X_{\infty})$.
\end{lem}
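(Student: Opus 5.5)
The plan is to pass to the limit in the exact sequences $0 \to \Jac X_n \to \Pic X_n \to \Z \to 0$. After tensoring with $\Z_p$, this gives for each $n$ an exact sequence
\[
0 \to \Z_p \otimes_{\Z} \Jac X_n \to \Z_p \otimes_{\Z} \Pic X_n \xrightarrow{\deg_{X_n}} \Z_p \to 0,
\]
since $\Z_p$ is flat over $\Z$. By Proposition \ref{prop:functo}(1), the transition maps $f_*$ are compatible with the degree maps, and the induced map on the right-hand $\Z_p$ is the identity. So these sequences form an inverse system. Here $\Gamma$ acts on the quotient $\Z_p$ trivially, because $\deg$ is $\Gamma$-invariant.

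The terms $\Z_p \otimes \Jac X_n$ are finite by Theorem \ref{thm:Kirch}, so they are compact. Hence the system satisfies Mittag-Leffler and $\varprojlim^1$ vanishes. Taking projective limits therefore gives an exact sequence of $\Z_p[[\Gamma]]$-modules
\[
0 \to \Jac_{\Z_p} X_{\infty} \to \Pic_{\Z_p} X_{\infty} \to \Z_p \to 0,
\]
with $\Z_p$ carrying the trivial action, i.e.\ $\Z_p \simeq \Z_p[[\Gamma]]/(\gamma - 1 : \gamma \in \Gamma)$. Proposition \ref{prop:compa_Pic_vol} says $\Pic_{\Z_p} X_{\infty}$ is finitely generated torsion, so the submodule $\Jac_{\Z_p} X_{\infty}$ is as well, since $\Lambda$ is noetherian. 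The module $\Z_p$ is clearly torsion too. Additivity of $\cha$ (the Lemma in Subsection \ref{ss:cha}) then yields $\cha(\Pic_{\Z_p} X_\infty) = \cha(\Jac_{\Z_p} X_\infty)\cha(\Z_p)$.

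It remains to compute $\cha(\Z_p)$, which I would do using Lemma \ref{lem:l0_easy} with $I = \Gamma$. In that case $\Z_p[[\Gamma/\Gamma]] = \Z_p$ and $(\Gamma/I)_{\tors}$ is trivial. So $m_0(\Z_p) = 0$, and $l_0(\Z_p)$ equals $1$ if $d = 1$ and $0$ if $d \geq 2$, i.e.\ $l_0(\Z_p) = \delta_{d,1}$. This can also be checked directly. For $d = 1$, $\cha(\Z_p) = (T)$, and $l_0(T) = 1$. For $d \geq 2$, $\Z_p$ is pseudo-null, being supported at the height-$d$ prime $(T_1,\dots,T_d)$, so $\cha(\Z_p) = (1)$. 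Adding $l_0$ and $m_0$ across the product of characteristic ideals gives the stated formulas.

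The only step needing care is the exactness of the limit on the left and the surjectivity onto $\Z_p$. Surjectivity is immediate, because $\deg_{X_n}$ is surjective at every level and the transition map on $\Z_p$ is the identity. So the main point is the Mittag-Leffler argument, which is guaranteed by the finiteness of the Jacobians of the connected graphs $X_n$.
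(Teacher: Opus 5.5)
Your proposal is correct and follows the paper's own route. The paper also uses the exact sequence $0 \to \Jac_{\Z_p} X_{\infty} \to \Pic_{\Z_p} X_{\infty} \to \Z_p \to 0$, gets finite generation and torsion from noetherianity together with Proposition \ref{prop:compa_Pic_vol}, applies additivity of $\cha$, and finishes with Lemma \ref{lem:l0_easy} for $I = \Gamma$. The one difference is that you explicitly justify exactness of the limit sequence by a Mittag-Leffler argument using the finiteness of $\Jac X_n$, whereas the paper simply says the sequence holds ``by definition.''
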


\begin{proof}
By definition we have an exact sequence
\[
0 \to \Jac_{\Z_p} X_{\infty} \to \Pic_{\Z_p} X_{\infty} \to \Z_p \to 0.
\]
Since $\Z_p[[\Gamma]]$ is noetherian, we deduce from Proposition \ref{prop:compa_Pic_vol} that $\Jac_{\Z_p} X_{\infty}$ is also finitely generated and torsion.
Then this sequence shows the formula for the characteristic ideals.
The rest of the statement follows from Lemma \ref{lem:l0_easy} applied to $I = \Gamma$.
\end{proof}

\section{Iwasawa-type asymptotic formula}\label{sec:proof}

In this section we prove Theorem \ref{thm:main1}.
In fact, we prove the following equivalent formulation in terms of derived graphs, which is more suitable for explicit computations and constructions.
The equivalence follows easily from Proposition \ref{prop:compa_Pic_vol} and Lemma \ref{lem:PJ_compa}.

\begin{thm}\label{thm:main1_r}
Let $X$ be a graph and $\Gamma \simeq \Z_p^d$.
Let $\alpha: \bE_X \to \Gamma$ be a voltage assignment and $\cI = (I_v)_{v \in V_X}$ a family of closed subgroups of $\Gamma$.
Let $X_{\infty}/X$ be the derived $\Z_p^d$-covering and suppose that each $X_n$ is connected.
We define
\[
\lambda = l_0(\det \cL_{\alpha, V^{\unr}}) + \sum_{\substack{v \in V_X \\ \rank_{\Z_p} I_v = 1}} \#(\Gamma/I_v)_{\tors} - \delta_{d, 1}
\]
and $\mu = m_0(\det \cL_{\alpha, V^{\unr}})$.
Then there exist rational numbers
\[
\lambda_1, \dots, \lambda_{d-1}, \mu_1, \dots, \mu_{d-1}, \nu
\]
such that
\[
\ord_p(\kappa_{X_n})
= (\lambda n + \mu p^n) p^{(d-1)n} + \sum_{i=1}^{d-1} (\lambda_i n + \mu_i p^n) p^{(d-1-i)n} + \nu
\]
holds for $n \gg 0$.
\end{thm}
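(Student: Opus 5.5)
We will compute $\ord_p(\kappa_{X_n})$ character by character, using the exact sequence from the proof of Proposition \ref{prop:compa_Pic_vol} at finite level. Then we apply Monsky's Theorem \ref{thm:Mon} on semi-algebraic sets and identify the leading terms by Theorem \ref{thm:Cuo-Mon}.

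\textbf{Step 1: factor the order of the torsion of the Picard group.} Since $X_n$ is connected, $\kappa_{X_n}=\#\Jac X_n=\#(\Pic X_n)_{\tors}$. We work over a finite extension $\mathcal{O}$ of $\Z_p$ containing the $p^n$-th roots of unity; lengths only scale by the ramification index. We decompose $\ol{\Q_p}\otimes\Div X_n=\bigoplus_{v}\ol{\Q_p}[\Gamma_n/I_{v,n}]$ into $\chi$-isotypic parts for $\chi\in\widehat{\Gamma_n}$. The $\chi$-part of $\bigoplus_v \Z_p[\Gamma_n/I_{v,n}]$ consists of the $v$ with $\chi\in\widehat{\Gamma/I_v}$. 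By Lemma \ref{lem:Lap_two}, $\cL_{X_n}$ acts on it by the matrix $\chi(\cL_\alpha)$ restricted to $V_\chi:=\{v:\chi|_{I_v}=1\}$, with the $v$-th column multiplied by $\chi(\nu_{I_{v,n}})=\#I_{v,n}$.

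Kirchhoff's theorem in its "product of nonzero eigenvalues" form suggests
\[
\ord_p(\kappa_{X_n}) = \sum_{\chi\ne 1}\ord_p\det(\chi(\cL_\alpha)|_{V_\chi}) + \sum_{\chi\ne1}\sum_{v\in V_\chi}\ord_p\#I_{v,n} + (\text{trivial-character and } \#V\text{-type corrections}) .
\]
To make this rigorous, we pass to $\Z_p$-lattices. The standard way is the index comparison $[\,\bigoplus_\chi M_\chi : M\,]$ for $M=\bigoplus_v \Z_p[\Gamma_n/I_{v,n}]$, which cancels between source and target of $\cL_{X_n}$ since both are the same lattice. This gives $\#(\Z_p\otimes\Pic X_n)_{\tors}=\prod_{\chi\neq 1}|\det|^{-1}\cdot(\text{trivial part})$. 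The trivial-character contribution is the reduced determinant for the graph $X$ with the weights $\#I_{v,n}$. Its valuation is a polynomial in $n$ for $n\gg0$, because $\#I_{v,n}$ is $p^{n\cdot\rank I_v+c}$ eventually.

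\textbf{Step 2: stratify by which vertices are "active".} For each subset $W\subset V_X$, let $S_W=\{\chi\in\widehat\Gamma: V_\chi=W\}$. This set is a Boolean combination of the sets $\widehat{\Gamma/I_v}$, hence semi-algebraic. On $S_W$, the determinant $\det(\chi(\cL_\alpha)|_W)=\chi(f_W)$ with $f_W=\det(\cL_{\alpha,W})\in\Z_p[[\Gamma]]$, and it is nonzero for $\chi\ne1$ by connectivity. Theorem \ref{thm:Mon} applied to $(f_W,S_W)$ gives a polynomial in $n,p^n$ of the required shape. The term $\sum_{\chi\in S_W\cap\widehat{\Gamma_n}}\sum_{v\in W}\ord_p\#I_{v,n}$ is a count of characters times a polynomial in $n$. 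The count $\#(\widehat{\Gamma/H}\cap\widehat{\Gamma_n})$ is eventually $c\,p^{n\cdot\rank(\Gamma/H)}$, so inclusion–exclusion makes this term a polynomial in $n$ and $p^n$ of degree at most $p^{dn}$ times $n$. Summing over $W$ yields the polynomial form of the statement.

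\textbf{Step 3: identify leading coefficients (the main obstacle).} The $n p^{(d-1)n}$ and $p^{dn}$ coefficients must equal $\lambda$ and $\mu$. The stratum $W=V^{\unr}\cup\{v: I_v \text{ finite}\}$ is the only one of full density. Up to the finitely many vertices with finite nontrivial $I_v$, it agrees with $\widehat{\Gamma}$ minus a union of sets $\widehat{\Gamma/I_v}$ of lower rank. Replacing $S_W$ by $\widehat\Gamma$ changes the sum by contributions from lower-rank subgroups; Theorem \ref{thm:Cuo-Mon} applied to $f$ restricted to $\Z_p[[\Gamma/I_v]]$ then shows these contribute $O(np^{(d-2)n}+p^{(d-1)n})$ when $\rank I_v\ge 2$. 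For $\rank I_v=1$, they contribute order $np^{(d-1)n}$, and this must be balanced against the $\ord_p\#I_{v,n}\approx n$ summed over $\#\widehat{\Gamma/I_v}\cap\widehat{\Gamma_n}\approx\#(\Gamma/I_v)_{\tors}p^{(d-1)n}$ characters. That produces exactly $\sum_{\rank I_v=1}\#(\Gamma/I_v)_{\tors}$ in $\lambda$.

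The delicate point is handling vertices with finite nontrivial $I_v$, where $\det\cL_{\alpha,W}$ with $W\supsetneq V^{\unr}$ differs from $\det\cL_{\alpha,V^{\unr}}$. I would first try to show that these finitely many bounded-index vertices contribute only $O(p^{(d-1)n})$ after column scaling, by comparing the two determinants via the $\nu_{I_v}$ factors. Finally, $\delta_{d,1}$ comes from excluding the trivial character, which only matters at order $n$ when $d=1$. Theorem \ref{thm:Cuo-Mon} then gives $\lambda=l_0(\det\cL_{\alpha,V^{\unr}})+\dots$ and $\mu=m_0(\det\cL_{\alpha,V^{\unr}})$.
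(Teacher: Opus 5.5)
Your architecture matches the paper's. You decompose the Laplacian of $X_n$ by characters (via the eigenvalue form of Kirchhoff's theorem, where the paper uses Propositions~\ref{prop:Jac_compa} and~\ref{prop:det_L_compu}), stratify by $V^{\chi}$, apply Theorem~\ref{thm:Mon} on each semi-algebraic stratum, and read off $\lambda,\mu$ via Theorem~\ref{thm:Cuo-Mon}. Steps 1--2 are fine in outline. Step 3, however, is where $\lambda$ and $\mu$ are actually determined, and it is wrong in two places.

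First, the ``delicate point'' is vacuous. Since $\Gamma\simeq\Z_p^d$ is torsion-free, a finite $I_v$ is trivial, so the full-density stratum is exactly $\{\chi : V^{\chi}=V^{\unr}\}=\widehat{\Gamma}\setminus\bigcup_{v\notin V^{\unr}}\widehat{\Gamma/I_v}$. Your proposed remedy would actually be false if torsion were allowed: a finite nontrivial inertia group adds $\ord_p(\#I_v)$ for a positive proportion of characters, i.e.\ it contributes at the scale of $\mu$. This is the origin of $(\star)$ in Theorem~\ref{thm:main3}.

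Second, you claim that for $\rank_{\Z_p} I_v=1$ the Monsky sum over $\widehat{\Gamma/I_v}\cap\widehat{\Gamma_n}$ is of order $np^{(d-1)n}$ and must be ``balanced''. This is false, and if it were true the coefficient of $np^{(d-1)n}$ would not be the stated $\lambda$. Since $\Gamma/I_v$ has $\Z_p$-rank $d-1$, the Cuoco--Monsky asymptotics for that quotient give $O(np^{(d-2)n}+p^{(d-1)n})=O(p^{(d-1)n})$. The bound you wrote for rank $\geq 2$ already holds in rank one; you have an off-by-one in the ranks. Consequently, every stratum other than $V^{\unr}$, and the difference between the $V^{\unr}$-stratum and all of $\widehat{\Gamma}$, is absorbed into $O(p^{(d-1)n})$. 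The term $\sum_{\rank I_v=1}\#(\Gamma/I_v)_{\tors}$ then comes solely from the ramification sum $\sum_{\chi\in\widehat{\Gamma_n}}\sum_{v\in V^{\chi}}\ord_p(\#I_{v,n})=\sum_{v}\#(\Gamma/I_v)_{\tors}\bigl((\rank_{\Z_p}I_v)n-\ord_p\#(\Gamma/I_v)_{\tors}\bigr)p^{(d-\rank_{\Z_p}I_v)n}$, with nothing to cancel against.

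Third, $\delta_{d,1}$ does not come from excluding the trivial character. In the unramified case $\bm{1}(\det\cL_{\alpha,V^{\unr}})=\det\cL_X=0$, so that character is dropped from Monsky's sum anyway. It comes from the term $-\ord_p(\#\Gamma_n)=-dn$. In your framework you must actually compute the trivial block rather than call it a ``reduced determinant''. In the basis $\{f^*(v)\}_{v\in V_X}$ it is $\mathrm{diag}(\#I_{v,n})\cdot\cL_X$, whose product of nonzero eigenvalues is $\kappa_X\prod_{v}\#I_{v,n}\cdot\#V_{X_n}/\#\Gamma_n$. Dividing by $\#V_{X_n}$ as in Kirchhoff's theorem leaves exactly the correction $\sum_{v\in V_X}\ord_p(\#I_{v,n})-dn+\ord_p(\kappa_X)$, matching Proposition~\ref{prop:det_L_compu}. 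Without this computation, $\lambda$ is not determined when $d=1$.

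Finally, the lattice-index argument you sketch does not apply as stated, because $\cL_{X_n}$ is not injective. The eigenvalue form of Kirchhoff's theorem is already rigorous here, since the zero eigenvalue is simple and lies in the trivial block, so you can simply drop the lattice argument.
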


In Subsection \ref{ss:fin_cov}, we show Proposition \ref{prop:det_L_compu} on finite Galois coverings of connected graphs.
We will use it to prove Theorem \ref{thm:main1_r} in Subsection \ref{ss:pf_main1}.
In Subsection \ref{ss:examples_main1}, we present some numerical examples.

\subsection{Formulas for finite coverings}\label{ss:fin_cov}

Let $\Gamma$ be a finite group.
For a $\Gamma$-covering $Y/X$ of connected graphs, the divisor group $\Div Y$ is naturally a $\Z[\Gamma]$-module.
We write $(\Div Y)^{\Gamma}$ for its $\Gamma$-invariant part.
Since the Laplacian operator $\cL_Y$ is a $\Z[\Gamma]$-endomorphism of $\Div Y$, it induces an endomorphism $\cL_Y^{\nt}$ of $\Div Y/(\Div Y)^{\Gamma}$; here ``$\nt$'' stands for ``non-trivial.''
Note that $\Div Y/(\Div Y)^{\Gamma}$ is a finitely generated free $\Z$-module since so is $\Div Y$.
We write $\det \cL_Y^{\nt} \in \Z$ for the determinant.

\begin{prop}\label{prop:Jac_compa}
Let $Y/X$ be a $\Gamma$-covering of connected graphs.
Then we have
\[
\frac{\kappa_Y}{\kappa_X} = | \det \cL_Y^{\nt} | \cdot \frac{\prod_{v \in V_X} \# I_v}{\# \Gamma},
\]
where $I_v = I_v(Y/X) \subset \Gamma$ denotes the inertia group.
\end{prop}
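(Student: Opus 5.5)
We compare the Laplacian complex of $Y$ with its restriction to the $\Gamma$-invariant divisors, and identify that restriction with a twisted version of $\cL_X$. The main tool is a short exact sequence of two-term complexes
\[
0 \to (\Div Y)^{\Gamma} \to \Div Y \to \Div Y/(\Div Y)^{\Gamma} \to 0,
\]
each equipped with the endomorphism induced by $\cL_Y$.

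\emph{Step 1: the invariant part.} Let $f\colon Y \to X$ be the covering map. The group $(\Div Y)^{\Gamma}$ is free with basis $f^*(u)$, $u \in V_X$. By Proposition \ref{prop:functo}(2) we have $\cL_Y \circ f^* = f^{\sfr} \circ \cL_X$, and by definition $f^{\sfr}(u) = \# I_u \, f^*(u)$. Hence, in this basis, $\cL_Y|_{(\Div Y)^\Gamma}$ is given by the matrix $D\cL_X$, where $D = \mathrm{diag}(\# I_u)$. Set $Q = \Cok(D\cL_X)$.

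\emph{Step 2: the snake lemma.} Apply the snake lemma to the short exact sequence above. On the kernels, $\Ker(\cL_Y) = \Z\iota_Y$ and $\iota_Y = f^*\iota_X$ is invariant, so the map on kernels is an isomorphism. The kernel of $\cL_Y^{\nt}$ is torsion-free and has rank $0$ by a rank count, so it vanishes; in particular $\det \cL_Y^{\nt} \neq 0$. We obtain
\[
0 \to Q \to \Pic Y \to \Cok(\cL_Y^{\nt}) \to 0 ,
\]
where $\#\Cok(\cL_Y^{\nt}) = |\det\cL_Y^{\nt}|$. The degree map $\deg_Y\colon \Pic Y \to \Z$ restricts on $Q$ to $u \mapsto \deg_Y f^*(u) = \#\Gamma/\#I_u$. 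Its image is therefore $g\Z$ with $g = \gcd_u(\#\Gamma/\#I_u)$. Let $Q_0 = \Ker(\deg_Y|_Q)$. A second snake lemma, applied with the surjection $\deg_Y\colon \Pic Y \to \Z$, gives
\[
0 \to Q_0 \to \Jac Y \to \Cok(\cL_Y^{\nt}) \to \Z/g \to 0 .
\]
By Theorem \ref{thm:Kirch}, this yields $\kappa_Y = \#Q_0 \cdot |\det\cL_Y^{\nt}|/g$.

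\emph{Step 3: computing $\#Q_0$.} Since $D$ is injective, we have an exact sequence
\[
0 \to \Pic X \xrightarrow{D} Q \to \Z^{V_X}/D\Z^{V_X} \to 0,
\]
and the last term has order $\prod_u \# I_u$. Composing $D$ with the degree on $Q$ gives $\#\Gamma \cdot \deg_X$. So $\Jac X \subset Q_0$, and in the rank-one group $Q/\Jac X$ the image of $\Pic X/\Jac X \simeq \Z$ has index $\prod_u \# I_u$.

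Now compare degrees. The image of $\Pic X/\Jac X$ in $Q/Q_0 \simeq g\Z$ is $\#\Gamma\Z$, of index $\#\Gamma/g$. Since $Q_0/\Jac X$ is exactly the torsion of $Q/\Jac X$, we get
\[
\prod_u \# I_u = \#(Q_0/\Jac X)\cdot \frac{\#\Gamma}{g}.
\]
Hence $\#Q_0 = \kappa_X \, g \prod_u\# I_u/\#\Gamma$. Substituting into Step 2 gives the claimed formula.

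The main obstacle is the bookkeeping in Steps 2 and 3. The degree maps and the correction factors $g$, $\#I_u$ and $\#\Gamma$ must be tracked carefully, so that $g$ cancels and the non-torsion parts are compared via indices rather than orders. The injectivity of $\cL_Y^{\nt}$ also requires the rank argument on kernels.
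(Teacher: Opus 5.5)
Your proof is correct. It uses the same ingredients as the paper: the identity $\cL_Y\circ f^*=f^{\sfr}\circ\cL_X$, the lattice $\Div Y/(\Div Y)^{\Gamma}$, and the index $\prod_v\#I_v$ of $f^{\sfr}(\Div X)$ in $f^*(\Div X)$. The organization differs, though.

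The paper maps the whole four-term complex of $X$ into that of $Y$ via $(\id,f^*,f^{\sfr},\#\Gamma)$. It then takes the homology sequence of the cokernel complex, which relates $\Jac X$ and $\Jac Y$ directly through $\ol{\cL_Y}\colon\Div Y/f^*(\Div X)\to\Div Y/f^{\sfr}(\Div X)$. A single snake lemma then replaces $\ol{\cL_Y}$ by $\cL_Y^{\nt}$, at the cost of the factor $\prod_v\#I_v$. In effect, the paper puts the factor $D$ on the quotient side, while you keep $D\cL_X$ on the invariant side and compare $\Cok(D\cL_X)$ with $\Pic X$ afterwards.

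Because the paper carries $\iota$ and $\deg$ along in the complexes, every group in its sequence is finite. As a result:
\begin{itemize}
\item the factor $\#\Gamma$ comes straight from the last vertical arrow;
\item no auxiliary $g=\gcd_u(\#\Gamma/\#I_u)$ and no index computation with non-torsion groups is needed;
\item $\Ker(\cL_Y^{\nt})=0$ is immediate from finiteness.
\end{itemize}
Your route costs an extra degree snake lemma and the index bookkeeping of Step 3. In return it gives the extension of Picard groups $0\to\Cok(D\cL_X)\to\Pic Y\to\Cok(\cL_Y^{\nt})\to0$. In the unramified case this says that $\Pic X$ embeds into $\Pic Y$ with cokernel $\Cok(\cL_Y^{\nt})$.

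The one step you should write out is the ``rank count'' giving $\Ker(\cL_Y^{\nt})=0$. Two easy ways:
\begin{itemize}
\item By exactness, the connecting map embeds $\Ker(\cL_Y^{\nt})$ into $\Ker(Q\to\Pic Y)\subset Q_0$. The group $Q_0$ is finite, because $Q$ has rank one and $\deg_Y|_Q\neq 0$. A torsion-free subgroup of a finite group vanishes.
\item Over $\Q$, the central idempotent $(\#\Gamma)^{-1}\sum_{\gamma\in\Gamma}\gamma$ splits $\Div Y\otimes\Q$ into $\cL_Y$-stable summands. Since $\Ker(\cL_Y)\otimes\Q=\Q\iota_Y$ lies in the invariant summand, $\cL_Y$ is injective on the complementary summand, which is isomorphic to $(\Div Y/(\Div Y)^{\Gamma})\otimes\Q$.
\end{itemize}
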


\begin{proof}
We first note that the vertical arrows in Proposition \ref{prop:functo}(2) are all injective and the cokernel is the complex
\[
\xymatrix{
	0 \ar[r] & 0 \ar[r] & \frac{\Div Y}{f^*(\Div X)} \ar[r]^{\ol{\cL_Y}} & \frac{\Div Y}{f^{\sfr}(\Div X)} \ar[r]^-{\ol{\deg_Y}} & \Z/{\# \Gamma} \Z \ar[r] & 0,
}
\]
where $\ol{\cL_Y}$ and $\ol{\deg_Y}$ are the homomorphisms induced by $\cL_Y$ and $\deg_Y$.
Therefore, taking the homology exact sequence, we obtain an exact sequence
\[
0 \to \Ker(\ol{\cL_Y}) \to \Jac X \to \Jac Y \to \Cok(\ol{\cL_Y}) \to \Z/(\# \Gamma) \Z \to 0.
\]
All modules in this sequence are finite, so we obtain
\[
\frac{\kappa_Y}{\kappa_X} 
= \frac{\# \Jac Y}{\# \Jac X}
= \frac{\# \Cok(\ol{\cL_Y})}{\# \Ker(\ol{\cL_Y})} \frac{1}{\# \Gamma},
\]
using Kirchhoff's matrix-tree theorem (Theorem \ref{thm:Kirch}).
We now consider a commutative diagram with exact rows
\[
\xymatrix{
& & \frac{\Div Y}{f^*(\Div X)} \ar@{=}[r] \ar[d]_{\ol{\cL_Y}} & \frac{\Div Y}{(\Div Y)^{\Gamma}} \ar[d]^{\cL_Y^{\nt}} \\
0 \ar[r] & \frac{f^*(\Div X)}{f^{\sfr}(\Div X)} \ar[r] & \frac{\Div Y}{f^{\sfr}(\Div X)} \ar[r] & \frac{\Div Y}{(\Div Y)^{\Gamma}} \ar[r] & 0.
}
\]
Here, we have $f^*(\Div X) = (\Div Y)^{\Gamma}$ by the definition of $f^*$.
Moreover, the definition of $f^{\sfr}$ implies
\[
f^*(\Div X) / f^{\sfr}(\Div X) \simeq \bigoplus_{v \in V_X} \Z/(\# I_v) \Z.
\]
Then by using the snake lemma, we obtain
\[
\frac{\# \Cok(\ol{\cL_Y})}{\# \Ker(\ol{\cL_Y})}
= \frac{\# \Cok(\cL_Y^{\nt})}{\# \Ker(\cL_Y^{\nt})} \cdot \# \left( \frac{f^*(\Div X)}{f^{\sfr}(\Div X)} \right)
= | \det \cL_Y^{\nt}| \cdot \prod_{v \in V_X} \# I_v,
\]
where we actually have $\Ker(\ol{\cL_Y}) = 0$ and $\Ker(\cL_Y^{\nt}) = 0$.
This completes the proof.
\end{proof}

To state Proposition \ref{prop:det_L_compu}, we introduce some notation (cf.~Definition \ref{defn:V0Vur_vol}).
Let $X$ be a graph.
Let $\Gamma$ be a finite abelian group.
Let $\alpha: \bE_X \to \Gamma$ be a voltage assignment and $\cI = (I_v)_{V \in V_X}$ a family of subgroups of $\Gamma$.

\begin{defn}\label{defn:sets_V}
We set
\[
V^{\unr} = \{ v \in V_X \mid I_v = \{1_{\Gamma}\}\}.
\]
For each finite character $\chi$ of $\Gamma$, we also set
\[
V^{\chi} = \{ v \in V_X \mid \Ker(\chi) \supset I_v\}.
\]
Note that we have $V^{\unr} \subset V^{\chi} \subset V_X$ for any finite character $\chi$.
For each subset $V \subset V_X$, we define an endomorphism $\cL_{\alpha, V}$ of $\bigoplus_{v \in V} \Z[\Gamma] v$ as the $V$-component of $\cL_{\alpha}$ in Definition \ref{defn:Lap_vol}.
\end{defn}

\begin{prop}\label{prop:det_L_compu}
Let $Y = X(\Gamma, \alpha, \cI)$ be the derived graph with $\Gamma$ a finite abelian group.
We assume that $Y$ is connected.
Then we have
\[
\ord_p(\kappa_Y)
= \sum_{\chi \in \widehat{\Gamma} \setminus \{\bm{1}\}} \ord_p(\chi(\det \cL_{\alpha, V^{\chi}})) + \sum_{\chi \in \widehat{\Gamma}} \sum_{v \in V^{\chi}} \ord_p(\# I_{v}) - \ord_p(\# \Gamma) + \ord_p(\kappa_{X}),
\]
where $\widehat{\Gamma}$ denotes the group of finite characters of $\Gamma$ with values in $\ol{\Q_p}^{\times}$ and $\bm{1} \in \widehat{\Gamma}$ the trivial character.
\end{prop}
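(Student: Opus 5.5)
Take $p$-adic valuations in Proposition \ref{prop:Jac_compa}:
\[
\ord_p(\kappa_Y) = \ord_p(\kappa_X) + \ord_p(\det \cL_Y^{\nt}) + \sum_{v \in V_X} \ord_p(\# I_v) - \ord_p(\# \Gamma).
\]
So it suffices to show
\[
\ord_p(\det \cL_Y^{\nt}) = \sum_{\chi \ne \bm{1}} \ord_p(\chi(\det \cL_{\alpha, V^{\chi}})) + \sum_{\chi \ne \bm{1}} \sum_{v \in V^{\chi}} \ord_p(\# I_v).
\]
The trivial character contributes $\sum_{v \in V_X} \ord_p(\# I_v)$ to the double sum in the statement, since $V^{\bm{1}} = V_X$, and this matches the displayed term above.

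To compute the determinant, extend scalars to a finite extension $K$ of $\Q_p$ containing all character values; determinants are unchanged. Since $\Gamma$ is abelian, $K \otimes \Div Y = \bigoplus_v K[\Gamma/I_v]$ decomposes into $\chi$-isotypic components $e_\chi(\cdot)$. Moreover, $(\Div Y)^{\Gamma} \otimes K$ is exactly the trivial component. Hence
\[
K \otimes \Div Y/(\Div Y)^{\Gamma} = \bigoplus_{\chi \neq \bm{1}} e_\chi(K \otimes \Div Y),
\]
and $\det \cL_Y^{\nt} = \prod_{\chi \ne \bm{1}} \det(\cL_Y|_{\chi})$. The $\chi$-part of $K[\Gamma/I_v]$ is one-dimensional exactly when $I_v \subset \Ker \chi$, i.e.\ $v \in V^{\chi}$, and is zero otherwise. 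So the $\chi$-component has basis $\{e_\chi v : v \in V^{\chi}\}$.

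Next, compute $\cL_Y$ on this basis using Lemma \ref{lem:Lap_two}: $\cL_Y(v) = \nu_{I_v} \ol{\cL_\alpha(v)}$. Applying $e_\chi$, and using that $\Gamma$ acts on the $\chi$-part through $\chi$, the norm $\nu_{I_v}$ acts as $\# I_v$ (because $\chi$ is trivial on $I_v$). The image $\ol{\cL_\alpha(v)} = \sum_{w} a_{wv} w$ with $a_{wv} \in \Z[\Gamma]$ becomes $\sum_{w \in V^{\chi}} \chi(a_{wv}) e_\chi w$, since $e_\chi w = 0$ for $w \notin V^{\chi}$. So the matrix of $\cL_Y$ on the $\chi$-component is $\chi$ applied to the $V^{\chi}$-submatrix of $\cL_\alpha$, with column $v$ scaled by $\# I_v$. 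Therefore
\[
\det(\cL_Y|_\chi) = \chi(\det \cL_{\alpha, V^{\chi}}) \prod_{v \in V^\chi} \# I_v .
\]
Taking $\ord_p$ and summing over $\chi \neq \bm{1}$ gives the claim.

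The main care point is the bookkeeping in this last step: checking that $\chi$ applied to matrix entries is compatible with reducing modulo $I_v$ (conventions for $\chi$ versus $\chi^{-1}$ are harmless, since these only permute the $\chi$), and that the $\nu_{I_v}$ factor really becomes $\# I_v$ column by column. Nonvanishing of each $\chi(\det \cL_{\alpha,V^\chi})$ for $\chi \ne \bm{1}$ follows because $\Ker \cL_Y^{\nt} = 0$, as shown in the proof of Proposition \ref{prop:Jac_compa}, so every factor is nonzero.
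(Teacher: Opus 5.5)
Your proposal is correct and follows essentially the same route as the paper. You reduce via Proposition \ref{prop:Jac_compa} to computing $\ord_p(\det \cL_Y^{\nt})$, split over the nontrivial characters, and use Lemma \ref{lem:Lap_two} to see that on the $\chi$-part $\cL_Y$ is $\chi$ applied to $\cL_{\alpha, V^{\chi}}$ with column $v$ scaled by $\# I_v$. Your isotypic-idempotent formulation, the trivial-character bookkeeping ($V^{\bm{1}} = V_X$) and the nonvanishing remark only make explicit details the paper leaves implicit.
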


\begin{proof}
By Proposition \ref{prop:Jac_compa}, it is enough to show
\[
\ord_p(\det \cL_Y^{\nt})
= \sum_{\chi \in \widehat{\Gamma} \setminus \{\bm{1}\}} \left[ \ord_p(\chi(\det \cL_{\alpha, V^{\chi}})) + \sum_{v \in V^{\chi}} \ord_p(\# I_v) \right].
\]
Since $\Gamma$ is abelian, the characters in $\widehat{\Gamma} \setminus \{\bm{1}\}$ define an isomorphism
\[
\ol{\Q_p}[\Gamma]/\ol{\Q_p}[\Gamma]^{\Gamma} \simeq \prod_{\chi \in \widehat{\Gamma} \setminus \{\bm{1}\}} \ol{\Q_p}.
\]
This decomposition yields $\det \cL_Y^{\nt} = \prod_{\chi \in \widehat{\Gamma} \setminus \{\bm{1}\}} \chi(\det \cL_Y)$ and so
\[
\ord_p(\det \cL_Y^{\nt})
= \sum_{\chi \in \widehat{\Gamma} \setminus \{\bm{1}\}} \ord_p(\chi(\det \cL_Y)).
\]
Fix $\chi \in \widehat{\Gamma} \setminus \{\bm{1}\}$.
Since $\cL_Y$ is a $\Z[\Gamma]$-endomorphism of $\Div Y \simeq \bigoplus_{v \in V_X} \Z[\Gamma/I_v] v$, the value $\chi(\det \cL_Y)$ is the determinant of the induced $\Z[\Imag(\chi)]$-endomorphism $\cL_Y^{\chi}$ of $\bigoplus_{v \in V^{\chi}} \Z[\Imag(\chi)] v$.
By Lemma \ref{lem:Lap_two}, we have
\[
\cL_Y^{\chi}(v) = (\# I_v) \cL_{\alpha, V^{\chi}}(v)
\]
for each $v \in V^{\chi}$.
We then obtain
\[
\chi(\det \cL_Y)
= \det \cL_Y^{\chi}
= \left( \prod_{v \in V^{\chi}} \# I_v \right) \det \cL_{\alpha, V^{\chi}}.
\]
This completes the proof.
\end{proof}

\subsection{Proof of Theorem \ref{thm:main1_r}}\label{ss:pf_main1}

We are now ready to prove Theorem \ref{thm:main1_r}.

\begin{proof}[Proof of Theorem \ref{thm:main1_r}]
By applying Proposition \ref{prop:det_L_compu} to $X_n = X(\Gamma_n, \alpha_n, \cI_n)/X$, we obtain
\[
\ord_p(\kappa_{X_n})
= \sum_{\chi \in \widehat{\Gamma_n} \setminus \{\bm{1}\}} \ord_p(\chi(\det \cL_{\alpha, V^{\chi}})) + \sum_{\chi \in \widehat{\Gamma_n}} \sum_{v \in V^{\chi}} \ord_p(\# I_{v, n}) - dn + \ord_p(\kappa_{X}).
\]
Here, we note that the set $V^{\chi}$ is independent of $n$.
Let us show that the first and the second terms are both polynomials of $p^n$ and $n$ as claimed, and also determine the leading coefficients up to $O(p^{(d-1)n})$.

We begin with the second term:
\begin{align*}
\sum_{\chi \in \widehat{\Gamma_n}} \sum_{v \in V^{\chi}} \ord_p(\# I_{v, n})
& = \sum_{v \in V_X} \ord_p(\# I_{v, n}) \sum_{\substack{\chi \in \widehat{\Gamma_n} \\ V^{\chi} \ni v}} 1
= \sum_{v \in V_X} \ord_p(\# I_{v, n}) \sum_{\chi \in \widehat{\Gamma_n/I_{v, n}}} 1\\
& = \sum_{v \in V_X} \ord_p(\# I_{v, n}) \#(\Gamma_n/I_{v, n}).
\end{align*}
For each $v \in V_X$, we observe that
\[
\ord_p(\# I_{v, n}) = (\rank_{\Z_p} I_v)  n-\ord_p(\# (\Gamma/I_v)_{\tors})
\]
and
\[
\#(\Gamma_n/I_{v, n}) = \#(\Gamma/I_v)_{\tors} p^{(d-\rank_{\Z_p} I_v)n}
\]
hold for $n \gg 0$.
Therefore, we have
\[
\sum_{\chi \in \widehat{\Gamma_n}} \sum_{v \in V^{\chi}} \ord_p(\# I_{v, n})
= \sum_{v \in V_X} \#(\Gamma/I_v)_{\tors}((\rank_{\Z_p} I_v) n-\ord_p(\# (\Gamma/I_v)_{\tors})) p^{(d-\rank_{\Z_p} I_v)n}
\]
for $n \gg 0$.
This is indeed a polynomial of $p^n$ and $n$ as in the theorem.
Moreover, up to $O(p^{(d-1)n})$, this is
\[
\left(\sum_{\substack{v \in V_X \\ \rank_{\Z_p} I_v = 1}} \#(\Gamma/I_v)_{\tors} \right) n p^{(d-1)n}.
\]

For the first term, a key step is to rewrite the sum as
\[
\sum_{\chi \in \widehat{\Gamma_n} \setminus \{\bm{1}\}} \ord_p(\chi(\det \cL_{\alpha, V^{\chi}}))
= \sum_{V^{\unr} \subset V \subset V_X} \left[ \sum_{\substack{\chi \in \widehat{\Gamma_n} \setminus \{ \bm{1} \} \\ V^{\chi} = V}} \ord_p(\chi(\det \cL_{\alpha, V})) \right].
\]
Fix $V^{\unr} \subset V \subset V_X$.
Then 
\begin{align*}
\{\chi \in \widehat{\Gamma} \mid V^{\chi} = V\}
& = \bigcap_{v \in V \setminus V^{\unr}} \{\chi \in \widehat{\Gamma} \mid \Ker(\chi) \supset I_v\} 
	\cap \bigcap_{v \in V_X \setminus V} \{\chi \in \widehat{\Gamma} \mid \Ker(\chi) \not\supset I_v\}\\
& = \bigcap_{v \in V \setminus V^{\unr}} \widehat{\Gamma/I_v} \setminus \bigcup_{v \in V_X \setminus V} \widehat{\Gamma/I_v}.
\end{align*}
Here, the final intersection is taken in the set $\widehat{\Gamma}$, so if $V = V^{\unr}$ we have
\[
\{\chi \in \widehat{\Gamma} \mid V^{\chi} = V^{\unr}\}
= \widehat{\Gamma} \setminus \bigcup_{v \in V_X \setminus V^{\unr}} \widehat{\Gamma/I_v}.
\]
In any case, we see that the set $\{\chi \in \widehat{\Gamma} \setminus \{ \bm{1} \} \mid V^{\chi} = V\}$ is semi-algebraic, so Monsky's theorem (Theorem \ref{thm:Mon}) is applicable.
As a result, the term is a polynomial of $p^n$ and $n$ as desired.
We finally consider the leading coefficients up to $O(p^{(d-1)n})$.
By Theorem \ref{thm:Mon}, for any non-trivial closed subgroup $H$ of $\Gamma$, the contribution of $\widehat{\Gamma/H}$ is absorbed in $O(p^{(d-1)n})$.
Therefore, the leading coefficients come from the term $V = V^{\unr}$, which by Theorem \ref{thm:Cuo-Mon} yields
\[
(l_0(\det \cL_{\alpha, V^{\unr}}) n + m_0(\det \cL_{\alpha, V^{\unr}}) p^n)p^{(d-1)n} + O(p^{(d-1)n}).
\]
This completes the proof of Theorem \ref{thm:main1_r}.
\end{proof}

\begin{rem}\label{rem:inert_same}
Let us discuss a special case where $I_v$ are all the same group $I$ and moreover that $\Gamma/I$ is torsion-free.
In this case, Theorem \ref{thm:main1_r} can be deduced from the unramified case, proved by DuBose--Valli\`eres \cite{DV23}, as follows.
Let $r$ be the rank of $I$, so $\Gamma/I \simeq \Z_p^{d-r}$.
Then $X_{\infty}/X$ contains an unramified $\Gamma/I$-covering $X_{\infty}'/X$.
The situation is illustrated below (although the upper sequence should be drawn diagonally, we use a horizontal layout to save space).
\[
\xymatrix{
	& X_1 \ar[d] \ar[ld]
 	& X_2 \ar[l] \ar[d]
	& \cdots \ar[l]\\
	X
	& X_1' \ar[l]
 	& X_2' \ar[l]
	& \cdots \ar[l]
}
\]
Then the unramified case tells us a formula of the form
\[
\ord_p(\kappa_{X_n'})
= (\lambda' n + \mu' p^n) p^{(d-r-1)n} + \sum_{i=1}^{d-r-1} (\lambda_i' n + \mu_i' p^n) p^{(d-r-1-i)n} + \nu'
\]
for $n \gg 0$.
Note that this holds even if $r = d$ (i.e., $I = \Gamma$), where $X_n' \simeq X$ and thus $\ord_p(\kappa_{X_n'}) = \ord_p(\kappa_X)$ is constant.
Since $X_n/X_n'$ is a $(\Z/p^n\Z)^r$-covering that is totally ramified at all vertices, we have
\[
\kappa_{X_n} 
= (p^{rn})^{\# V_{X_n'}-1} \kappa_{X_n'}
= p^{rn (p^{(d-r)n} \# V_X-1)} \kappa_{X_n'}
\]
(this follows from counting the spanning trees or applying Kirchhoff's matrix-tree theorem).
Therefore, we obtain
\begin{align*}
\ord_p(\kappa_{X_n})
& = rn (p^{(d-r)n} \# V_X-1) + \ord_p(\kappa_{X_n'})\\
& = r \# V_X n p^{(d-r)n} + (\lambda' n + \mu' p^n) p^{(d-r-1)n} + \sum_{i=1}^{d-r-1} (\lambda_i' n + \mu_i' p^n) p^{(d-r-1-i)n} - r n + \nu'.
\end{align*}
This is indeed consistent with our results.
Note that $(\lambda, \mu) = (0, 0)$ if $r \geq 2$ and $(\lambda, \mu) = (\# V_X, 0)$ if $r = 1$.

More generally, this observation is applicable when $I_v$'s are commensurable to each other; it is enough to consider $X_{\infty}/X_{n_0}$ for some large $n_0 \geq 0$.
\end{rem}

\subsection{Numerical examples}\label{ss:examples_main1}

We present numerical results obtained by using SageMath.
Consider the case $\Gamma \simeq \Z_p^2$ with basis $\sigma, \tau$.
Then the asymptotic formula takes the form
\[
\ord_p(\kappa_{X_n}) = \lambda np^n + \mu p^{2n} + \lambda_1n + \mu_1p^n + \nu.
\]
In each case below, the computation are carried out for $n = 0, 1, 2, \dots, 6$, from which we obtain suggested formulas.
Although $\lambda$ and $\mu$ are theoretically determined, identifying $\lambda_1$, $\mu_1$, and $\nu$ would require a more detailed analysis, which we do not pursue in this paper.

Firstly, we consider the case where $X$ is a bouquet.
In this case, Remark \ref{rem:inert_same} is always applicable, so we are essentially reduced to the case of unramified coverings.
Let us demonstrate this observation in an example:

\begin{eg}\label{eg:bou_ram}
Let $p = 2$.
Let $X$ be a bouquet that consists of a single vertex $v$ and $3$ loops $e_1, e_2, e_3$ (together with their opposites). 
We define the voltage assignment $\alpha$ by
\[
\alpha(e_1) = \sigma^2,
\quad
\alpha(e_2) = \alpha(e_3) = \sigma.
\]
Then the derived unramified $\lrang{\sigma} \simeq \Z_p$-covering $X'_{\infty}/X$ is the one that is constructed in \cite{Kata_36} so that we have
\[
\ord_p(\kappa_{X_n'}) = \lambda' n + \mu' p^n + \nu'
\]
with $(\lambda', \mu') = (3, 0)$.

We take the inertia group $I_v = \lrang{\tau} \simeq \{0\} \times \Z_p$.
The resulting coverings are illustrated as follows, where the lower tower is the unramified $\lrang{\sigma}$-covering and the upper tower is the $\Gamma$-covering.
\[
\Large
\xymatrix{
&
\begin{tikzpicture}[baseline]
\node[circle,fill,inner sep=1pt] (a) at (0,0.5) {};
\node[circle,fill,inner sep=1pt] (b) at (0,-0.5) {};
\foreach \bend in {17.5, 12.5, 7.5, 2.5, -2.5, -7.5, -12.5, -17.5}
\path (a) edge[draw,bend right=\bend] (b);
\path (a) edge[loop left, looseness=15, out=120, in=60] (a);
\path (a) edge[loop left, looseness=20, out=120, in=60] (a);
\path (b) edge[loop left, looseness=15, out=-120, in=-60] (b);
\path (b) edge[loop left, looseness=20, out=-120, in=-60] (b);
\end{tikzpicture}
\ar[ld] \ar[d]
&
\begin{tikzpicture}[baseline]
	\node[minimum size=4em,regular polygon,regular polygon sides=4] (a) {};
	\foreach \x in {1, 2, ..., 4}
	\fill (a.corner \x) circle[radius=1pt];
	\foreach \x/\y in {1/2, 2/3, 3/4, 4/1}
	\foreach \bend in {20,15,10,5,-5,-10,-15,-20}
		\path (a.corner \x) edge [bend right=\bend] (a.corner \y);
	\foreach \x/\y in {1/3, 2/4, 3/1, 4/2}
	\foreach \bend in {5,10,15,20}
		\path (a.corner \x) edge [bend right=\bend] (a.corner \y);
\end{tikzpicture}
\ar[l] \ar[d]
&
\begin{tikzpicture}[baseline]
	\node[minimum size=4em,regular polygon,regular polygon sides=8] (a) {};
	\foreach \x in {1, 2, ..., 8}
	\fill (a.corner \x) circle[radius=1pt];
	\foreach \x/\y in {1/2, 2/3, 3/4, 4/5, 5/6, 6/7, 7/8, 8/1}
	\foreach \bend in {20,17.5,15,12.5,10,7.5,5,2.5,0, -2.5,-5,-7.5,-10,-12.5,-15,-17.5,-20}
		\path (a.corner \x) edge [bend right=\bend] (a.corner \y);
	\foreach \x/\y in {1/3, 2/4, 3/5, 4/6, 5/7, 6/8, 7/1, 8/2}
	\foreach \bend in {15,10,5,0,-5,-10,-15,-20}
		\path (a.corner \x) edge [bend right=\bend] (a.corner \y);
\end{tikzpicture}
\ar[l] \ar[d]
& \cdots \ar[l]
\\
\begin{tikzpicture}[baseline]
\node[circle,fill,inner sep=1pt] (a) {};
\foreach \len in {15,30}
\path (a) edge[loop above, looseness=\len, out=60, in=120] (a);
\path (a) edge[loop above, looseness=45, out=60, in=120] node[above] {} (a);
\end{tikzpicture}
&
\begin{tikzpicture}[baseline]
\node[circle,fill,inner sep=1pt] (a) at (0,0.5) {};
\node[circle,fill,inner sep=1pt] (b) at (0,-0.5) {};
\foreach \bend in {15, 5, -5,-15}
\path (a) edge[draw,bend right=\bend] (b);
\path (a) edge[loop left, looseness=15, out=120, in=60] (a);
\path (b) edge[loop left, looseness=15, out=-120, in=-60] (b);
\end{tikzpicture}
\ar[l] &
\begin{tikzpicture}[baseline]
	\node[minimum size=4em,regular polygon,regular polygon sides=4] (a) {};
	\foreach \x in {1, 2, ..., 4}
	\fill (a.corner \x) circle[radius=1pt];
	\foreach \x/\y in {1/2, 2/3, 3/4, 4/1}
	\foreach \bend in {10, -10}
		\path (a.corner \x) edge [bend right=\bend] (a.corner \y);
	\foreach \x/\y in {1/3, 2/4, 3/1, 4/2}
	\foreach \bend in {10}
		\path (a.corner \x) edge [bend right=\bend] (a.corner \y);
\end{tikzpicture}
\ar[l] &
\begin{tikzpicture}[baseline]
	\node[minimum size=4em,regular polygon,regular polygon sides=8] (a) {};
	\foreach \x in {1, 2, ..., 8}
	\fill (a.corner \x) circle[radius=1pt];
	\foreach \x/\y in {1/2, 2/3, 3/4, 4/5, 5/6, 6/7, 7/8, 8/1}
	\foreach \bend in {10, -10}
		\path (a.corner \x) edge [bend right=\bend] (a.corner \y);
	\foreach \x/\y in {1/3, 2/4, 3/5, 4/6, 5/7, 6/8, 7/1, 8/2}
	\foreach \bend in {0}
		\path (a.corner \x) edge [bend right=\bend] (a.corner \y);
\end{tikzpicture}
\ar[l] & \cdots \ar[l]
}
\]
Then by Remark \ref{rem:inert_same}, we obtain an asymptotic formula
\[
\ord_p(\kappa_{X_n}) 
= n(p^n-1) + \lambda' n + \mu' p^n + \nu'
= np^n + (\lambda'-1) n + \mu' p^n + \nu'.
\]
The numerical results are listed in Table \ref{tab:eg_dum}.
The computations suggest that the formula is $np^n + 2n + 1$ for $n \geq 2$, which is consistent with our prediction.
\end{eg}

\begin{table}[htbp]
\centering
\begin{tabular}{c | @{\hspace{1em}} c c c c c c c @{\hspace{1em}} c}
\toprule
 & \swid{$0$} & \swid{$1$} & \swid{$2$} & \swid{$3$} & \swid{$4$} & \swid{$5$} & \swid{$6$} &Suggested formula\\ \midrule
Example \ref{eg:bou_ram} ($p = 2$) & 0 & 3 & 13 & 31 & 73 & 171 & 397 & $n p^n + 2n + 1$ ($n \geq 2$)\\
Example \ref{eg:dum1} ($p = 2$) & 0 & 2 & 12 & 42 & 120 & 310 & 756 & $2n p^n - 2n$ ($n \geq 0$)\\
Example \ref{eg:dum1} ($p = 3$) & 0 & 4 & 32 & 156 & 640 & 2420 & 8736 & $2n p^n - 2n$ ($n \geq 0$)\\
Example \ref{eg:dum2} ($p = 2$) & 0 & 2 & 14 & 52 & 154 & 408 & 1014 & $3np^n-2p^n-2n+2$ ($n \geq 0$)\\
Example \ref{eg:dum2} ($p = 3$) & 0 & 3 & 43 & 239 & 1047 & 4123 & 15299 & $4np^n-3p^n-2n+2$ ($n \geq 1$)\\
Example \ref{eg:dum3} ($p = 2$) & 1 & 6 & 24 & 88 & 320 & 1184 & 4480 & $p^{2n} + np^n$ ($n \geq 0$)\\
\bottomrule
\end{tabular}
\caption{The values of $\ord_p(\kappa_{X_n})$}
\label{tab:eg_dum}
\end{table}

In the following three examples, we consider the case where $X$ consists of two vertices.

\begin{eg}\label{eg:dum1}
Let $X$ be a graph such that $V_X = \{v_1, v_2\}$ and
\[
\bE_X = \{e_1, e_2, e_3, \ol{e_1}, \ol{e_2}, \ol{e_3}\},
\]
where $e_1$ and $e_2$ are loops at $v_1$ and $v_2$ respectively, and $e_3$ is an edge from $v_1$ to $v_2$.
We define the voltage assignment $\alpha$ by
\[
\alpha(e_1) = \tau,
\quad
\alpha(e_2) = \sigma,
\quad
\alpha(e_3) = 1_{\Gamma}.
\]
We define the inertia groups by $I_{v_1} = \lrang{\sigma} \simeq \Z_p \times \{0\}$ and $I_{v_2} = \lrang{\tau} \simeq \{0\} \times \Z_p$.
The associated $\Gamma$-covering when $p = 2$ is illustrated as follows.
\[
\xymatrix{
\begin{tikzpicture}[baseline={(current bounding box.center)}]
\foreach \x in {0,1}
{
  \node[circle,fill,inner sep=1.5pt]
    (v\x) at (\x,0) {};
}
\draw (v0) -- (v1);
\path (v0) edge[loop, looseness=15, out=150, in=210] (v0);
\path (v1) edge[loop, looseness=15, out=30, in=-30] (v1);
\end{tikzpicture}
&
\begin{tikzpicture}[baseline={(current bounding box.center)}]
\foreach \x in {0,1}
{
  \foreach \y in {0,1}
  {
    \node[circle,fill,inner sep=1.5pt]
      (v\x\y) at (\x,\y) {};
  }
}
\foreach \x in {0,1}
{
  \foreach \y/\yy in {0/1}
    {
      \foreach \bend in {5,-5}
        {
          \path (v\x\y) edge[draw,bend right=\bend] (v\x\yy);
        }
    }
}
\foreach \len in {0.2,0.5}
{
  \draw (v01) to[out=225,in=135,looseness=\len] (v00);
}
\foreach \len in {0.2,0.5}
{
  \draw (v11) to[out=-45,in=45,looseness=\len] (v10);
}
\foreach \y in {0,1}
{
  \foreach \yy in {0,1}
    {
      \draw (v0\y) -- (v1\yy);
    }
}
\end{tikzpicture}
\ar[l]
&
\begin{tikzpicture}[baseline={(current bounding box.center)}]
\foreach \x in {0,1}
{
  \foreach \y in {0,1,2,3}
  {
    \node[circle,fill,inner sep=1.5pt]
      (v\x\y) at (\x,\y) {};
  }
}
\foreach \x in {0,1}
{
  \foreach \y/\yy in {0/1,1/2,2/3}
    {
      \foreach \bend in {10,5,-5,-10}
        {
          \path (v\x\y) edge[draw,bend right=\bend] (v\x\yy);
        }
    }
}
\foreach \len in {0.2,0.3,0.4,0.5}
{
  \draw (v03) to[out=225,in=135,looseness=\len] (v00);
}
\foreach \len in {0.2,0.3,0.4,0.5}
{
  \draw (v13) to[out=-45,in=45,looseness=\len] (v10);
}
\foreach \y in {0,1,2,3}
{
  \foreach \yy in {0,1,2,3}
    {
      \draw (v0\y) -- (v1\yy);
    }
}
\end{tikzpicture}
\ar[l] 
& \cdots \ar[l]}
\]
In this case, we have $V^{\unr} = \emptyset$ and so $\det \cL_{\alpha, V^{\unr}} = 1$.
Then Theorem \ref{thm:main1_r} predicts that $(\lambda, \mu) = (2, 0)$.
The numerical results when $p = 2, 3$ are listed in Table \ref{tab:eg_dum}, which are consistent with the prediction.
\end{eg}

\begin{eg}\label{eg:dum2}
Let the graph $X$, the voltage assignment $\alpha$, and the inertia group $I_{v_2}$ be the same as in Example \ref{eg:dum1}.
We change the inertia group $I_{v_1}$ to $I_{v_1} = \lrang{\sigma^p} \simeq p\Z_p \times \{0\}$.
In this case, Theorem \ref{thm:main1_r} predicts that $(\lambda, \mu) = (p + 1, 0)$.
The numerical results when $p = 2, 3$ are listed in Table \ref{tab:eg_dum}.
\end{eg}

\begin{eg}\label{eg:dum3}
Let us construct an example where $\lambda > 0$ and $\mu > 0$.
Let $X$ be the graph such that $V_X = \{v_1, v_2\}$ and 
\[
\bE_X = \{e_0, e_1, \dots, e_p, \ol{e_0}, \ol{e_1}, \dots, \ol{e_p}\},
\]
where $e_0$ is a loop at $v_1$ and $e_1, \dots, e_p$ are all edges from $v_1$ to $v_2$.
We define the voltage assignment $\alpha$ by
\[
\alpha(e_0) = \tau,
\quad
\alpha(e_1) = \dots = \alpha(e_p) = 1_{\Gamma}.
\]
We define the inertia groups by $I_{v_1} = \lrang{\sigma} \simeq \Z_p \times \{0\}$ and $I_{v_2} = \{1_{\Gamma}\}$.
Then we have $V^{\unr} = \{v_2\}$ and $\cL_{\alpha, V^{\unr}} = \begin{pmatrix} p \end{pmatrix}$.
It follows that $(\lambda, \mu) = (1, 1)$.
The numerical results when $p = 2$ are listed in Table \ref{tab:eg_dum}.
\end{eg}

\section{Kida-type formula}\label{sec:Kida}

In this section, we prove Theorem \ref{thm:main3} by generalizing the strategy of \cite{Kata_31}, where $d = 1$.
In Subsection \ref{ss:Kida_alg}, we prove a key algebraic proposition (Proposition \ref{prop:Kida3}).
We will apply it to the Picard groups and deduce Theorem \ref{thm:main3} in Subsection \ref{ss:Kida_pf}.

\subsection{Algebraic propositions}\label{ss:Kida_alg}

Let $p$ be a prime number and let $\Gamma \simeq \Z_p^d$.
Let $G$ be a finite $p$-group.
Let
\[
\aug_G: \Z_p[[\Gamma]][G] \to \Z_p[[\Gamma]]
\AND
N_G: \Z_p[[\Gamma]][G] \to \Z_p[[\Gamma]]
\]
be the augmentation map and the norm map, respectively.
By definition, for each $F \in \Z_p[[\Gamma]][G]$, its norm $N_G(F)$ is the determinant of the $\Z_p[[\Gamma]]$-endomorphism $\times F$ of $\Z_p[[\Gamma]][G]$.
We let $\aug_G$ and $N_G$ denote the maps for $\F_p$ instead of $\Z_p$ as well.

\begin{lem}\label{lem:Kida1}
Let $F \in \F_p[[\Gamma]][G]$ be an element.
Then we have $N_G(F) = \aug_G(F)^{\# G}$ in $\F_p[[\Gamma]]$.
\end{lem}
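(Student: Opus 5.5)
We have to show that for $F \in \F_p[[\Gamma]][G]$ the norm $N_G(F)=\det(\times F)$ equals $\aug_G(F)^{\# G}$. The plan is to exploit that, in characteristic $p$, the augmentation ideal of a $p$-group algebra is nilpotent. Set $R = \F_p[[\Gamma]]$, a commutative ring, and write $I_G \subset R[G]$ for the augmentation ideal, i.e. the kernel of $\aug_G$. The first step is to record that $I_G$ is a nilpotent two-sided ideal of $R[G]$. Indeed, $I_G = R \otimes_{\F_p} I_{\F_p[G]}$, and the augmentation ideal of $\F_p[G]$ is nilpotent for a finite $p$-group $G$; this is standard, e.g. because $\F_p[G]$ is local with maximal ideal $I_{\F_p[G]}$. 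Consequently there is a filtration
\[
R[G] = I_G^0 \supset I_G^1 \supset \cdots \supset I_G^N = 0
\]
by $R$-submodules. Each piece is of the form $R \otimes_{\F_p} (\text{an ideal of } \F_p[G])$, so each piece and each graded quotient is a free $R$-module.

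Next, write $F = a + F'$ with $a = \aug_G(F) \in R$, viewed as $a \cdot 1_G$, and $F' \in I_G$. Multiplication by $F$ preserves each $I_G^k$, since these are two-sided ideals. On the graded quotient $I_G^k/I_G^{k+1}$, the map $\times F'$ induces zero because $F' I_G^k \subset I_G^{k+1}$, so $\times F$ acts as the scalar $a$. Choose an $R$-basis of $R[G]$ adapted to this filtration, which is possible since all quotients are free; concretely, take an $\F_p$-basis of $\F_p[G]$ adapted to the filtration by powers of $I_{\F_p[G]}$. In this basis the matrix of $\times F$ is block upper triangular with diagonal blocks $a \cdot \id$. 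Its determinant is therefore $a^{\sum_k \rank (I_G^k/I_G^{k+1})} = a^{\# G}$, which gives $N_G(F) = \aug_G(F)^{\# G}$.

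A simpler alternative is to induct on $\# G$ through a central subgroup of order $p$ and use the transitivity of norms. The filtration argument avoids that machinery, so I would use it. The only step requiring care is the nilpotence of the augmentation ideal, and freeness of the graded pieces is automatic by base change from $\F_p$. I do not expect a real obstacle here.
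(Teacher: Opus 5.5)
Your proof is correct, but it argues differently from the paper. The paper uses a Frobenius trick. With $p^N$ the exponent of $G$, it asserts $F^{p^N}=\aug_G(F)^{p^N}$ because $x\mapsto x^p$ is a ring homomorphism of $\F_p[[\Gamma]][G]$. It then applies the multiplicative map $N_G$ to get $N_G(F)^{p^N}=(\aug_G(F)^{\# G})^{p^N}$, and cancels the $p^N$-th power using that $\F_p[[\Gamma]]$ is a domain of characteristic $p$. You instead compute the determinant directly, using the $F$-stable filtration by powers of the nilpotent augmentation ideal, on whose graded pieces $F$ acts as the scalar $\aug_G(F)$.

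The paper's argument is shorter and only uses the exponent of $G$. However, its Frobenius step tacitly needs $\F_p[[\Gamma]][G]$ to be commutative, i.e.\ $G$ abelian. For non-abelian $G$ the identity $F^{p^N}=\aug_G(F)^{p^N}$ can fail: for the Heisenberg group of order $27$ with generators $g,h$, the element $F=(g-1)+(h-1)$ has $F^3\neq 0$ although $\aug_G(F)=0$. This is harmless in the paper, since Proposition~\ref{prop:Kida3} reduces to abelian $G$ before the lemma is invoked. It could also be repaired by replacing $p^N$ with a $p$-power at least the nilpotency index of the augmentation ideal, which is exactly your key input.

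Your route pays for the standard nilpotence fact. In return it proves the lemma as stated for every finite $p$-group $G$, for left and right multiplication alike, over any commutative $\F_p$-algebra in place of $\F_p[[\Gamma]]$, and with no root extraction.
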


\begin{proof}
Let $p^N$ be the exponent of $G$, that is, the maximum of the orders of the elements of $G$.
Since the $p$-th Frobenius map is a ring homomorphism, we have $F^{p^N} = \aug_G(F)^{p^N}$.
Then applying $N_G$, we obtain
\[
N_G(F)^{p^N} 
= N_G(F^{p^N})
= N_G(\aug_G(F)^{p^N})
= (\aug_G(F)^{\# G})^{p^N}.
\]
Since $\F_p[[\Gamma]]$ is a domain of characteristic $p$, this shows $N_G(F) = \aug_G(F)^{\# G}$.
\end{proof}

\begin{prop}\label{prop:Kida2}
Let $F \in \Z_p[[\Gamma]][G]$ be an element.
\begin{itemize}
\item[(1)]
We have $m_0(\aug_G(F)) = 0$ if and only if $m_0(N_G(F)) = 0$.
\item[(2)]
If the equivalent conditions in (1) hold, then we have $l_0(N_G(F)) = (\# G) l_0(\aug_G(F))$.
\end{itemize}
\end{prop}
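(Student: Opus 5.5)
The plan is to reduce both assertions to Lemma \ref{lem:Kida1} by passing to the reduction modulo $p$. The key observation is that reduction $\Z_p[[\Gamma]] \to \F_p[[\Gamma]]$ is compatible with both $\aug_G$ and $N_G$. For $\aug_G$ this is obvious. For $N_G$ it holds because $N_G(F)$ is the determinant of multiplication by $F$ on the free module $\Z_p[[\Gamma]][G]$ with basis $G$, and taking determinants commutes with base change along $\Z_p[[\Gamma]] \to \F_p[[\Gamma]]$. Hence
\[
\ol{N_G(F)} = N_G(\ol{F}) = \aug_G(\ol{F})^{\# G} = \ol{\aug_G(F)}^{\# G}
\]
in $\F_p[[\Gamma]]$, where the middle equality is Lemma \ref{lem:Kida1}.

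For (1), recall that for $f \in \Z_p[[\Gamma]] \setminus \{0\}$ we have $m_0(f) = 0$ if and only if $\ol{f} \neq 0$. Since $\F_p[[\Gamma]]$ is a domain, $\ol{\aug_G(F)}^{\# G} \neq 0$ if and only if $\ol{\aug_G(F)} \neq 0$, so the displayed identity gives the equivalence. One should also dispose of the degenerate case where $\aug_G(F)$ or $N_G(F)$ is zero, since $m_0$ is then undefined. Under the convention that the statement concerns nonzero elements, the reduction argument already shows that either condition forces both elements to be nonzero, because their reductions are nonzero. So whenever one side is meaningful and equal to zero, so is the other.

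For (2), assume the equivalent conditions hold. Then $m_0 = 0$ for both elements, so by definition $l_0(N_G(F)) = l_0(\ol{N_G(F)})$ and $l_0(\aug_G(F)) = l_0(\ol{\aug_G(F)})$. The invariant $l_0$ on $\F_p[[\Gamma]] \setminus \{0\}$ is a sum of valuations $\ord_P$ over primes of the form $(\gamma - 1)$, and each such valuation is additive. It follows that $l_0(g^{k}) = k\, l_0(g)$ for any nonzero $g$. Applying this to the displayed identity with $g = \ol{\aug_G(F)}$ and $k = \# G$ gives $l_0(N_G(F)) = (\# G)\, l_0(\aug_G(F))$.

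The only point requiring care is the compatibility of $N_G$ with reduction mod $p$. This is routine once $N_G$ is recognized as a determinant of a matrix with entries in $\Z_p[[\Gamma]]$, namely the regular representation of $F$ in the basis $G$. I do not expect any real obstacle; the substance is entirely in Lemma \ref{lem:Kida1}.
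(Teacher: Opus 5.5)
Your proposal is correct and follows the paper's proof essentially verbatim. Like the paper, you use the compatibility of $\aug_G$ and $N_G$ with reduction mod $p$ and apply Lemma \ref{lem:Kida1} to obtain $\ol{N_G(F)} = \ol{\aug_G(F)}^{\# G}$, from which (1) and (2) follow directly. Your extra remark on the degenerate cases where $\aug_G(F)$ or $N_G(F)$ vanishes is correct: either condition in (1) forces the relevant reduction to be nonzero. The paper leaves this point implicit.
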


\begin{proof}
We have commutative diagrams
\[
\xymatrix{
\Z_p[[\Gamma]][G] \ar[r]^-{\aug_G} \ar[d] &
\Z_p[[\Gamma]] \ar[d]
\\
\F_p[[\Gamma]][G] \ar[r]_-{\aug_G} &
\F_p[[\Gamma]]
}
\AND
\xymatrix{
\Z_p[[\Gamma]][G] \ar[r]^-{N_G} \ar[d] &
\Z_p[[\Gamma]] \ar[d]
\\
\F_p[[\Gamma]][G] \ar[r]_-{N_G} &
\F_p[[\Gamma]],
}
\]
where the vertical arrows are the reduction maps, denoted by $\ol{(-)}$.
Then by Lemma \ref{lem:Kida1}, we obtain
\[
\ol{N_G(F)} = \ol{\aug_G(F)}^{\# G}.
\]
In particular, $\ol{N_G(F)} \neq 0$ if and only if $\ol{\aug_G(F)} \neq 0$, so claim (1) follows.
If these equivalent conditions hold, then $l_0(\ol{N_G(F)}) = (\# G) l_0(\ol{\aug_G(F)})$ by the definition of $l_0(-)$, so claim (2) follows.
\end{proof}

The following is the key algebraic proposition, which is a generalization of \cite[Proposition 4.2]{Kata_31}.
For a finitely generated torsion $\Z_p[[\Gamma]][G]$-module $M$, we define $l_0(M)$ and $m_0(M)$ by forgetting the action of $G$.

\begin{prop}\label{prop:Kida3}
Let $M$ be a finitely generated torsion $\Z_p[[\Gamma]][G]$-module whose projective dimension is at most one.
Let $M_G$ be its $G$-coinvariant module, which is a $\Z_p[[\Gamma]]$-module.
\begin{itemize}
\item[(1)]
We have $m_0(M) = 0$ if and only if $m_0(M_G) = 0$.
\item[(2)]
If the equivalent conditions in (1) hold, then we have $l_0(M) = (\# G) l_0(M_G)$.
\end{itemize}
\end{prop}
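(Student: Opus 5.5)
Since $G$ is a $p$-group, $R := \Z_p[[\Gamma]][G]$ is a local ring: its reduction $\F_p[[\Gamma]][G]$ has nilpotent augmentation ideal, so the unique maximal ideal is the preimage of the maximal ideal of $\F_p[[\Gamma]]$. Over a noetherian local ring a finitely generated module of projective dimension at most one has a presentation
\[
0 \to R^a \overset{\varphi}{\to} R^b \to M \to 0
\]
with free modules. Because $M$ is torsion over $\Lambda = \Z_p[[\Gamma]]$ and $R$ is $\Lambda$-free of rank $\# G$, comparing $\Lambda$-ranks gives $a = b$. So we may take $\varphi$ to be an injective endomorphism of $R^a$, given by a matrix $A \in M_a(R)$.

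\textbf{Step 1: compute $\cha(M)$.} Regarded as a $\Lambda$-endomorphism of $\Lambda^{a \cdot \# G}$, $\varphi$ is still injective, so Proposition \ref{prop:cha_Fitt} gives $\cha(M) = (\det_\Lambda \varphi)$. To reduce to elements of the group ring, I would note that $R$ is commutative when $G$ is abelian, and then $\det_\Lambda \varphi = N_G(\det_R A)$ by the transitivity of determinants (norm of the determinant). For general $G$ I would avoid $\det_R A$ altogether and work directly modulo $p$ (see the last paragraph).

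\textbf{Step 2: compute $\cha(M_G)$.} Taking $G$-coinvariants is right exact, so $M_G = \Cok(\aug_G(A))$ with $\aug_G(A) \in M_a(\Lambda)$. To apply Proposition \ref{prop:cha_Fitt} I need $\aug_G(A)$ to be injective, i.e.\ $\det \aug_G(A) \neq 0$; this is not automatic. If $\det \aug_G(A) = 0$, then $M_G$ is not torsion and $m_0(M_G)$, $l_0(M_G)$ are undefined. So in (1) I would read ``$m_0(M_G) = 0$'' as including the condition that $M_G$ is torsion, and prove it in the form
\[
\ol{\det_\Lambda \varphi} \neq 0 \iff \ol{\det \aug_G(A)} \neq 0 .
\]
When the right side holds, $\det \aug_G(A) \neq 0$, so $M_G$ is torsion and $\cha(M_G) = (\det \aug_G(A))$.

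\textbf{Step 3: the key identity modulo $p$.} The main claim is
\[
\ol{\det_\Lambda \varphi} = \ol{\det \aug_G(A)}^{\# G} \quad \text{in } \F_p[[\Gamma]],
\]
a matrix version of Lemma \ref{lem:Kida1}. Once this holds, (1) follows immediately because $\F_p[[\Gamma]]$ is a domain. For (2), the $m_0$-invariants vanish, so each $l_0$ is just $l_0$ of the reduction, and $l_0(\ol{f}^{\# G}) = (\# G)\, l_0(\ol{f})$ follows from the definition; this is the argument of Proposition \ref{prop:Kida2}(2).

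\textbf{Main obstacle: proving the identity for general $G$.} I would induct on $\# G$, exploiting that $G$ is a $p$-group. Choose a normal subgroup $H \trianglelefteq G$ of order $p$ with $G/H$ cyclic of order $p$; a central $H$ suffices when $G$ is nonabelian. Over $\F_p$, the reduction of $\F_p[[\Gamma]][G]^a$ carries a filtration by powers of the augmentation ideal of $\F_p[H]$, which is nilpotent, namely $(h-1)^p = 0$. Each graded piece is isomorphic to $\F_p[[\Gamma]][G/H]^a$, and $\ol{\varphi}$ induces on every piece the same map, the reduction of $A$ to $G/H$. This holds because $\ol{\varphi}$ commutes with multiplication by $h-1$: $\varphi$ is right $R$-linear while $A$ acts on the left, and $h$ is central. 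A determinant is multiplicative along a filtration, so $\det \ol{\varphi} = (\det \text{of the } G/H \text{ map})^{p}$. Repeating the step down to the trivial group gives $\ol{\det \aug_G(A)}^{\# G}$.

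Since a nontrivial $p$-group has a nontrivial center, a central $H$ of order $p$ always exists, so the induction runs through any chain of normal subgroups with central order-$p$ steps. The part I expect to be the most delicate is checking that the graded pieces are free of the right rank and that the induced maps are all identified with the $G/H$-reduction. If this runs into difficulty, the fallback is the abelian case, where the identity follows directly from $\det_\Lambda \varphi = N_G(\det_R A)$ together with Lemma \ref{lem:Kida1} applied to $F = \det_R A$.
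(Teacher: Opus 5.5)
Your proof is correct, and for abelian $G$ it is the paper's argument. The paper also presents $M$ as the cokernel of an endomorphism $\Phi$ of $\Z_p[[\Gamma]][G]^r$, identifies $\cha(M) = (N_G(F))$ and $\cha(M_G) = (\aug_G(F))$ with $F = \det(\Phi)$, and concludes via Lemma \ref{lem:Kida1} and Proposition \ref{prop:Kida2}.

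The difference is in the non-abelian case. The paper disposes of it in one line (``by taking a central series and arguing inductively, we may assume $G$ abelian''). That is, it applies the abelian case to a central subgroup $N$, then the induction hypothesis to $M_N$ over $\Z_p[[\Gamma]][G/N]$. This tacitly uses that $M_N$ again has projective dimension at most one, which holds because the induced square presentation of $M_N$ has $\Lambda$-torsion cokernel and so stays injective.

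You instead prove the congruence $\ol{\det_\Lambda \varphi} = \ol{\det \aug_G(A)}^{\# G}$ directly for every $p$-group, via the $(h-1)$-adic filtration for a central $h$ of order $p$. This works: $\F_p[[\Gamma]][G]$ is free over $\F_p[[\Gamma]][H] \simeq \F_p[[\Gamma]][T]/(T^p)$. Hence, for $0 \le i \le p-1$, multiplication by $(h-1)^i$ identifies each graded piece of $\F_p[[\Gamma]][G]^a$ with $\F_p[[\Gamma]][G/H]^a$, compatibly with the maps induced by $A$.

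Your route is uniform. It avoids determinants over noncommutative rings and any re-verification of hypotheses at intermediate stages, and in the abelian case it gives an alternative proof of Lemma \ref{lem:Kida1}. The paper's route is shorter because it reuses the commutative norm formalism.

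Two small corrections:
\begin{itemize}
\item In Step 2, the injectivity of $\aug_G(A)$ is in fact automatic. $M_G$ is a $\Lambda$-quotient of the $\Lambda$-torsion module $M$, hence torsion, so $\det \aug_G(A) \neq 0$ and no reinterpretation of (1) is needed.
\item In your last paragraph, ``$G/H$ cyclic of order $p$'' should read ``$H$ cyclic of order $p$''. Only $\# H = p$ and the centrality of $H$ are used.
\end{itemize}
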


\begin{proof}
Since $G$ is a $p$-group, by taking a central series and arguing inductively, we may assume that $G$ is abelian.

By assumption, the $\Z_p[[\Gamma]][G]$-module $M$ is the cokernel of an endomorphism $\Phi$ of a free module $\Z_p[[\Gamma]][G]^r$.
Let $F = \det \nolimits_{\Z_p[[\Gamma]][G]}(\Phi) \in \Z_p[[\Gamma]][G]$ be its determinant (here we require $G$ to be abelian).
Let us show the following claims:
\begin{itemize}
\item[(a)]
$\cha(M_G) = (\aug_G(F))$ and
\item[(b)]
$\cha(M) = (N_G(F))$, where $\cha(M)$ is defined by forgetting the action of $G$.
\end{itemize}
Then the proposition follows from Proposition \ref{prop:Kida2}.

We show (a).
Let $\aug_G(\Phi)$ be the endomorphism of the free module $\Z_p[[\Gamma]]^r$ induced by $\Phi$ by taking the $G$-coinvariants.
Then $M_G$ is the cokernel of $\aug_G(\Phi)$, so by Proposition \ref{prop:cha_Fitt}, we have $\cha(M_G) = (\det \nolimits_{\Z_p[[\Gamma]]}(\aug_G(\Phi)))$.
Since $\aug_G: \Z_p[[\Gamma]][G] \to \Z_p[[\Gamma]]$ is a ring homomorphism, we obtain
\[
\det \nolimits_{\Z_p[[\Gamma]]}(\aug_G(\Phi))
= \aug_G (\det \nolimits_{\Z_p[[\Gamma]][G]}(\Phi)) 
= \aug_G(F),
\]
which implies (a).

We show (b).
By Proposition \ref{prop:cha_Fitt}, we have $\cha(M) = (\det \nolimits_{\Z_p[[\Gamma]]}(\Phi))$.
By a standard property of determinants and norms, we have
\[
\det \nolimits_{\Z_p[[\Gamma]]}(\Phi)
= N_G(\det \nolimits_{\Z_p[[\Gamma]][G]}(\Phi))
= N_G(F),
\]
which implies (b).
\end{proof}

\subsection{Proof of Theorem \ref{thm:main3}}\label{ss:Kida_pf}

We are now ready to prove Theorem \ref{thm:main3}.
We first reformulate the theorem in terms of derived graphs.

Let $X$ be a connected graph.
Let $\wtil{\Gamma}$ be a pro-$p$ group of the form $\wtil{\Gamma} = \Gamma \times G$, where $\Gamma \simeq \Z_p^d$ and $G$ is a finite $p$-group.
Let $\wtil{\alpha}: \bE_X \to \wtil{\Gamma}$ be a voltage assignment.
Let $\wtil{\cI} = (\wtil{I_v})_{v \in V_X}$ be a family of closed subgroups of $\wtil{\Gamma}$.
By the projection map $\wtil{\Gamma} \twoheadrightarrow \Gamma$, these induce a voltage assignment $\alpha: \bE_X \to \Gamma$ and a family $\cI = (I_v)_{v \in V_X}$ of closed subgroups of $\Gamma$.
Therefore, we obtain the derived graphs
\[
\wtil{X}_{\infty} = X(\wtil{\Gamma}, \wtil{\alpha}, \wtil{\cI})/\wtil{X}
\AND
X_{\infty} = X(\Gamma, \alpha, \cI)/X.
\]
We assume that the intermediate graphs are all connected.

Then we can reformulate Theorem \ref{thm:main3} as follows.

\begin{thm}
In this situation, the following hold.
\item[(1)]
We have $\mu(\wtil{X}_{\infty}/\wtil{X}) = 0$ if and only if both $\mu(X_{\infty}/X) = 0$ and $(\star)$ hold:
\begin{itemize}
\item[$(\star)$]
for any vertex $v \in V_X$, if $I_v$ is trivial, then $\wtil{I_v}$ is trivial.
\end{itemize}

\item[(2)]
If the equivalent conditions in (1) hold, then we have
\[
\lambda(\wtil{X}_{\infty}/\wtil{X}) + \delta_{d, 1}
 = (\# G) (\lambda(X_{\infty}/X) + \delta_{d, 1}) 
 - \sum_{v \in V_X} l_0(\Z_p[[\wtil{\Gamma}/\wtil{I}_v]]) (\# (G \cap \wtil{I_v}) - 1).
\]
\end{thm}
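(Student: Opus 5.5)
We reduce everything to Picard groups, using Lemma \ref{lem:PJ_compa} and Proposition \ref{prop:compa_Pic_vol} on both levels. By Lemma \ref{lem:PJ_compa}, $\lambda(\cdot)+\delta_{d,1}=l_0(\Pic_{\Z_p}(\cdot))$ and $\mu(\cdot)=m_0(\Pic_{\Z_p}(\cdot))$ on each level. Here $\Pic_{\Z_p}\wtil{X}_\infty$ is regarded as a $\Z_p[[\Gamma]]$-module, since $\Gamma$ is the Galois group of $\wtil{X}_\infty/\wtil{X}$.

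Write $\wtil{V}^{\unr}=\{v: \wtil{I_v}=1\}\subset V^{\unr}$. The projection formula of Proposition \ref{prop:compa_Pic_vol}, applied with $\wtil{\Gamma}$ and $\Z_p[[\wtil{\Gamma}]]=\Z_p[[\Gamma]][G]$, presents $\Pic_{\Z_p}\wtil{X}_\infty$ via
\[
0\to\bigoplus_{v\in V_X\setminus\wtil{V}^{\unr}}\Z_p[[\wtil\Gamma/\wtil{I_v}]]\to\Pic_{\Z_p}\wtil{X}_\infty\to\wtil{M}\to0 .
\]
In this sequence $\wtil{M}$ is the cokernel of the $\Z_p[[\Gamma]][G]$-endomorphism $\cL_{\wtil\alpha,\wtil V^{\unr}}$ of a free module, hence of projective dimension $\le1$. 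The snake-lemma argument in that proof goes through verbatim for the product group, with only torsionness over $\Z_p[[\Gamma]]$ to check. The analogous sequence for $X_\infty$ has the cokernel of $\cL_{\alpha,V^{\unr}}$. We then set $N=\bigoplus_{v\in V^{\unr}}\Z_p[[\Gamma]][G]v/\Imag(\cL_{\wtil\alpha,V^{\unr}})$, which again has projective dimension $\le 1$ over $\Z_p[[\Gamma]][G]$. Its $G$-coinvariants are exactly $\Pic^{\unr}_{\Z_p}X_\infty$, because reducing $\wtil\alpha$ mod $G$ gives $\alpha$.

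For (1), note first that $\Z_p[[\wtil\Gamma/\wtil I_v]]$ has $m_0=0$ over $\Z_p[[\Gamma]]$ unless it is non-torsion. That happens exactly when $\wtil I_v$ is finite, hence when $I_v$ is trivial. If $(\star)$ fails at some $v$, then $1\neq\wtil I_v\subset G$. In this case the vertex contributes the torsion module $\Z_p[[\Gamma]][G/\wtil I_v]$ to the kernel sequence relating $\Pic_{\Z_p}\wtil X_\infty$ to $N$. Rather than appearing as a subquotient of $\Pic$, it enters through the $v$-column of $\cL_{\wtil\alpha,V^{\unr}}$: Lemma \ref{lem:Lap_two} multiplies that column by the norm $\nu_{\wtil I_v}$, which is $\equiv0$ mod $p$ in $\F_p[[\Gamma]][G/\wtil I_v]$ after forgetting $G$. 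I expect this to produce a factor $p$ in $\cha$, so $\mu>0$. More precisely, we compare $\wtil M$ (indices $\wtil V^{\unr}$) with $N$ (indices $V^{\unr}$), and the vertices of $V^{\unr}\setminus\wtil V^{\unr}$ contribute. If $(\star)$ holds, then $\wtil V^{\unr}=V^{\unr}$ and $\wtil M=N$. Proposition \ref{prop:Kida3}(1) then gives $m_0(\wtil M)=0\iff m_0(\Pic^{\unr}_{\Z_p}X_\infty)=0$, and the ramified summands have $m_0=0$ by Lemma \ref{lem:l0_easy}. The main obstacle is the converse direction: proving $m_0>0$ when $(\star)$ fails. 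I would try to handle it by exhibiting $\Z_p[[\Gamma]]$-submodules killed by $p$ of positive rank, coming from $\Z_p[G/\wtil I_v]$-factors. Alternatively, I would compute $\det$ of the full Laplacian over $\F_p[[\Gamma]]$ and show that it vanishes.

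For (2), assume $(\star)$. Proposition \ref{prop:Kida3}(2) gives $l_0(\wtil M)=(\#G)\,l_0(\Pic^{\unr}_{\Z_p}X_\infty)$. Substituting into both versions of Proposition \ref{prop:compa_Pic_vol}, it remains to show, for each ramified $v$,
\[
l_0(\Z_p[[\wtil\Gamma/\wtil I_v]])=\#G\, l_0(\Z_p[[\Gamma/I_v]])-l_0(\Z_p[[\wtil\Gamma/\wtil I_v]])(\#(G\cap\wtil I_v)-1),
\]
that is, $\#(G\cap \wtil I_v)\,l_0(\Z_p[[\wtil\Gamma/\wtil I_v]])=\#G\,l_0(\Z_p[[\Gamma/I_v]])$. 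We obtain this by applying Proposition \ref{prop:Kida3}(2) to the $G/(G\cap\wtil I_v)$-module $\Z_p[[\wtil\Gamma/\wtil I_v]]$. This module is free over $\Z_p[[\wtil\Gamma/(\wtil I_v(G))]]$ after the standard identification, and its coinvariants are $\Z_p[[\Gamma/I_v]]$. Alternatively, it follows directly from the explicit formula of Lemma \ref{lem:l0_easy}, by comparing torsion orders.
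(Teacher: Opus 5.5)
Your argument for part (2), and for the implication ``$(\star)$ and $\mu(X_\infty/X)=0$ imply $\mu(\wtil X_\infty/\wtil X)=0$'', is the paper's argument. Under $(\star)$ the sets $\{v:\wtil I_v=1\}$ and $V^{\unr}$ coincide. The cokernel of $\cL_{\wtil\alpha,V^{\unr}}$ then has projective dimension at most one, its $G$-coinvariants are $\Pic^{\unr}_{\Z_p}X_\infty$, and Proposition \ref{prop:Kida3} applies. The ramified terms reduce to $l_0(\Z_p[[\wtil\Gamma/\wtil I_v]])=(G:G\cap\wtil I_v)\,l_0(\Z_p[[\Gamma/I_v]])$. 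Your route to this via Proposition \ref{prop:Kida3} works when $\rank_{\Z_p}I_v=1$, the only case where these terms are nonzero; the paper simply counts fibres of $\wtil\Gamma/\wtil I_v\to\Gamma/I_v$.

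The genuine gap is the other half of (1): you never show that $\mu(\wtil X_\infty/\wtil X)=0$ forces $(\star)$; you only say you expect it. Moreover, the setup you would build on is wrong exactly when $(\star)$ fails. If $1\neq\wtil I_v\subset G$, then $\wtil I_v$ is finite. So the ramification factor of $f_{\sfr}$ at $v$ is eventually $1$, and the $v$-component survives in the projective limit. The limit presentation of $\Pic_{\Z_p}\wtil X_\infty$ therefore has source $\bigoplus_{v:\,I_v=1}\Z_p[[\wtil\Gamma/\wtil I_v]]$, with $v$-column $\nu_{\wtil I_v}\ol{\cL_{\wtil\alpha}(v)}$, and not $\bigoplus_{v:\,\wtil I_v=1}\Z_p[[\wtil\Gamma]]$. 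The proof of Proposition \ref{prop:compa_Pic_vol} uses that finite inertia is trivial, which fails for $\Gamma\times G$, so it does not go through verbatim. Your displayed sequence would embed $\Z_p[[\Gamma]][G/\wtil I_v]$ into the torsion module $\Pic_{\Z_p}\wtil X_\infty$. But that module is free of positive rank over $\Z_p[[\Gamma]]$ (not torsion, as you state), so no such embedding exists.

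The missing idea is to take $G$-coinvariants of the whole Picard module, not only of its unramified part. From $f_*\circ\cL_{\wtil X_n}=\cL_{X_n}\circ f_{\sfr}$ (Proposition \ref{prop:functo}(1)) and the identification $(\Div\wtil X_n)_G\simeq\Div X_n$ via $f_*$, one gets a surjection $(\Pic_{\Z_p}\wtil X_\infty)_G\to\Pic_{\Z_p}X_\infty$. Its kernel comes from $\cL_{X_n}(\Div X_n)/\cL_{X_n}(f_{\sfr}\Div\wtil X_n)$. The paper identifies it as $\bigoplus_{v:\,I_v=1}(\Z_p/(\#\wtil I_v)\Z)[[\Gamma]]$, so its $m_0$ equals $\sum_{v:\,I_v=1}\ord_p(\#\wtil I_v)$. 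Since $(\Pic_{\Z_p}\wtil X_\infty)_G$ is a quotient of $\Pic_{\Z_p}\wtil X_\infty$, additivity of $m_0$ gives $\mu(\wtil X_\infty/\wtil X)\geq\sum_{v:\,I_v=1}\ord_p(\#\wtil I_v)$. This is positive if $(\star)$ fails, and together with your argument under $(\star)$ it proves (1).

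Your determinant idea can be rescued by the same device. Take the corrected presentation, projected to the components with $I_v=1$. Passing to $G$-coinvariants turns the $v$-column into $\#\wtil I_v$ times a column. So modulo $p$ the coinvariant cokernel has positive $\F_p[[\Gamma]]$-rank, which forces $p$ to divide the determinant.
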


\begin{proof}
We first show that $\mu(\wtil{X}_{\infty}/\wtil{X}) = 0$ implies $(\star)$.
The same reasoning as \cite[Proof of Theorem 4.3]{Kata_31}, which essentially results from Proposition \ref{prop:functo}(1), gives us an exact sequence
\[
0 \to \bigoplus_{v \in V^0} (\Z_p/(\# \wtil{I}_v) \Z)[[\Gamma]] \to (\Pic_{\Z_p} \wtil{X}_{\infty})_G \to \Pic_{\Z_p} X_{\infty} \to 0,
\]
where we set
\[
V^0 := \{v \in V_X \mid \text{$\wtil{I_v}$ is finite}\}.
\]
Since $(\Pic_{\Z_p} \wtil{X}_{\infty})_G$ is a quotient module of $\Pic_{\Z_p} \wtil{X}_{\infty}$, the assumption $m_0(\Pic_{\Z_p} \wtil{X}_{\infty}) = 0$ implies $m_0((\Pic_{\Z_p} \wtil{X}_{\infty})_G) = 0$.
This then implies
\[
m_0(\Z_p/(\# \wtil{I}_v) \Z)[[\Gamma]]) = 0
\]
for any $v \in V^0$, that is, $\wtil{I}_v$ is trivial for any $v \in V^0$.
Therefore, condition $(\star)$ holds.

In the rest of the proof, we may assume that $(\star)$ holds.
By Theorem \ref{thm:main1} (together with Proposition \ref{prop:compa_Pic_vol} and Lemma \ref{lem:PJ_compa}), we have the formulas
\[
\lambda(X_{\infty}/X) + \delta_{d, 1}
= l_0(\Pic^{\unr}_{\Z_p} X_{\infty}) + \sum_{v \in V_X \setminus V^{\unr}} l_0(\Z_p[[\Gamma/I_v]])
\]
and $\mu(X_{\infty}/X) = m_0(\Pic^{\unr}_{\Z_p} X_{\infty})$.
Here, $V^{\unr} = \{v \in V_X \mid I_v = \{1_{\Gamma}\}\}$ by definition, but condition $(\star)$ implies
\[
V^{\unr} = \{v \in V_X \mid \wtil{I_v} = \{1_{\wtil{\Gamma}}\}\}.
\]
Taking this observation into account, we define a $\Z_p[[\wtil{\Gamma}]]$-module $\Pic_{\Z_p}^{\unr} \wtil{X}_{\infty}$ in the same manner as in Definition \ref{defn:V0Vur_vol}.
To be concrete, we define an endomorphism $\cL_{\wtil{\alpha}, V^{\unr}}$ of $\bigoplus_{v \in V^{\unr}} \Z[\wtil{\Gamma}] v$ as the $V^{\unr}$-component of $\cL_{\wtil{\alpha}}$, and then define $\Pic_{\Z_p}^{\unr} \wtil{X}_{\infty}$ as the cokernel of the endomorphism of $\bigoplus_{v \in V^{\unr}} \Z_p[[\wtil{\Gamma}]] v$ induced by $\cL_{\wtil{\alpha}, V^{\unr}}$.
Then by Theorem \ref{thm:main1} (together with a direct extension of Proposition \ref{prop:compa_Pic_vol} and Lemma \ref{lem:PJ_compa}), we also have
\[
\lambda(\wtil{X}_{\infty}/\wtil{X}) + \delta_{d, 1}
= l_0(\Pic^{\unr}_{\Z_p} \wtil{X}_{\infty}) + \sum_{v \in V_X \setminus V^{\unr}} l_0(\Z_p[[\wtil{\Gamma}/\wtil{I_v}]])
\]
and $\mu(\wtil{X}_{\infty}/\wtil{X}) = m_0(\Pic^{\unr}_{\Z_p} \wtil{X}_{\infty})$.

By definition, $\Pic^{\unr}_{\Z_p} \wtil{X}_{\infty}$ is a finitely generated torsion $\Z_p[[\wtil{\Gamma}]]$-module whose projective dimension is at most one.
Moreover, since the base change of $\cL_{\wtil{\alpha}, V^{\unr}}$ coincides with $\cL_{\alpha, V^{\unr}}$, we have an isomorphism
\[
(\Pic^{\unr}_{\Z_p} \wtil{X}_{\infty})_G \simeq \Pic^{\unr}_{\Z_p} X_{\infty}.
\]
Therefore, the theorem follows from applying Proposition \ref{prop:Kida3} to $\Pic^{\unr}_{\Z_p} \wtil{X}_{\infty}$, if we have
\[
(\# G) l_0(\Z_p[[\Gamma/I_v]]) - l_0(\Z_p[[\wtil{\Gamma}/\wtil{I_v}]])
= l_0(\Z_p[[\wtil{\Gamma}/\wtil{I}_v]]) (\# (G \cap \wtil{I_v}) - 1)
\]
for each $v \in V_X$.
The last equation follows from elementary consideration:
It is enough to show
\[
l_0(\Z_p[[\wtil{\Gamma}/\wtil{I}_v]])
= (G: G \cap \wtil{I_v}) l_0(\Z_p[[\Gamma/I_v]]),
\]
which follows from the observation that the projection map yields a $(G: G \cap \wtil{I_v})$-to-one map
\[
\wtil{\Gamma}/\wtil{I_v} \twoheadrightarrow \Gamma/I_v.
\]
This completes the proof.
\end{proof}

\begin{rem}\label{rem:l0_easy3}
For a closed subgroup $\wtil{I}$ of $\wtil{\Gamma}$, we have an explicit formula
\[
l_0(\Z_p[[\wtil{\Gamma}/\wtil{I}]])
= \begin{cases} (\wtil{I}^{\sat}: \wtil{I}) & \text{if $\rank_{\Z_p} I = 1$}, \\ 0 & \text{if $\rank_{\Z_p} I \geq 2$}. \end{cases}
\]
Here, we define the saturation $G \subset \wtil{I}^{\sat} \subset \wtil{\Gamma}$ of $\wtil{I}$ by $\wtil{I}^{\sat}/\wtil{I}$ being finite and $\wtil{\Gamma}/\wtil{I}^{\sat}$ being torsion-free.
This formula follows from $l_0(\Z_p[[\wtil{\Gamma}/\wtil{I}_v]]) = (G: G \cap \wtil{I_v}) l_0(\Z_p[[\Gamma/I_v]])$ combined with Lemma \ref{lem:l0_easy}; we omit details.
\end{rem}

\section{Possible values of $\lambda$- and $\mu$-invariants}\label{sec:invariants}

In this section we prove Theorem \ref{thm:main2}.
In contrast to the previous sections, we focus on unramified coverings.

\subsection{$\Z_p^d$-coverings of bouquets}\label{ss:bou_rev}

We begin with a basic discussion parallel to \cite{Kata_36}.
Let $\Gamma \simeq \Z_p^d$ with $d \geq 2$.
Let $X$ be a bouquet with edges
\[
\bE_X = \{e_1, \dots, e_t, \ol{e_1}, \dots, \ol{e_t}\}.
\]
Let $\alpha: \bE_X \to \Gamma$ be a voltage assignment.
To ensure that the derived graphs are connected, we assume that the image of $\alpha$ is not contained in a proper closed subgroup of $\Gamma$.
By identifying $\cL_{\alpha}$ with the scalar, we have
\begin{align*}
\cL_{\alpha}
&= \sum_{i=1}^t (1 - \alpha(e_i)) + \sum_{i=1}^t (1 - \alpha(\ol{e_i}))\\
&= \sum_{i=1}^t (1 - \alpha(e_i)) + \sum_{i=1}^t (1 - \alpha(e_i)^{-1}).
\end{align*}
Set
\[
h_{\alpha}
= (1 - \alpha(e_1)) + \dots + (1 - \alpha(e_t))
\in \Z[\Gamma],
\]
which implicitly depends on the orientation of $X$.
Then we have
\[
\cL_{\alpha} 
= h_{\alpha} + \iota(h_{\alpha}),
\]
where $\iota$ denotes the involution of $\Z_p[[\Gamma]]$ that inverts every group element.
Therefore, by Theorem \ref{thm:main1_r}, the derived graph $X_{\infty}/X$ satisfies
\[
\lambda(X_{\infty}/X) = l_0(h_{\alpha} + \iota(h_{\alpha}))
\AND
\mu(X_{\infty}/X) = m_0(h_{\alpha} + \iota(h_{\alpha})).
\]
We now give a name to the elements of the form $h_{\alpha}$.

\begin{defn}\label{defn:adm}
We say an element $h$ of $\Z[\Gamma]$ admissible if
\begin{itemize}
\item[(a)]
$\aug(h) = 0$,
\item[(b)]
the coefficients of non-identity elements are all non-positive, and
\item[(c)]
$h \not \in \Z[\Gamma']$ for any proper subgroup $\Gamma'$ of $\Gamma$.
\end{itemize}
\end{defn}

Then it is clear that $h_{\alpha}$ is admissible for any voltage assignment $\alpha$, and conversely any admissible $h$ is of the form $h_{\alpha}$ for some bouquet and some voltage assignment $\alpha$.
Therefore, the possible pairs of $(\lambda(X_{\infty}/X), \mu(X_{\infty}/X))$ with $X$ a bouquet and $X_{\infty}/X$ is an unramified $\Gamma$-covering are precisely the possible pairs of
\[
(l_0(h + \iota(h)), m_0(h + \iota(h)))
\]
with $h \in \Z[\Gamma]$ admissible.

In the next subsection we prove the following key proposition.

\begin{prop}\label{prop:constr_key}
Let $p$ be a prime number and $\Gamma \simeq \Z_p^d$ with $d \geq 2$.
Then for any non-negative integer $\lambda \geq 0$, there exists an element $\ol{h} \in \F_p[[\Gamma]]$ with $\aug(\ol{h}) = 0$ such that $l_0(\ol{h} + \iota(\ol{h})) = \lambda$.
\end{prop}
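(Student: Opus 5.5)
The plan is to work inside the subring $\F_p[[\sigma,\tau]] \simeq \F_p[[S,T]]$ generated by two basis elements $\sigma,\tau$ of $\Gamma$; this uses $d \geq 2$. Let $\mathfrak{m}$ be the maximal ideal of $\F_p[[\Gamma]] \simeq \F_p[[T_1,\dots,T_d]]$. For $0 \neq f$, write $\mathrm{in}(f)$ for its leading form, i.e.\ its lowest-degree homogeneous part. Leading forms are multiplicative. For $\gamma \in \Gamma \setminus \Gamma^p$, the leading form $\mathrm{in}(\gamma - 1)$ is a nonzero $\F_p$-linear form. This gives the following criterion, which I would state as a lemma.

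\textbf{Lemma.} If $f = \prod_{i=1}^{k} (\gamma_i - 1) \cdot g$ with $\gamma_i \in \Gamma \setminus \Gamma^p$, and $\mathrm{in}(g)$ has no linear factor over $\F_p$ (or $g$ is a unit), then $l_0(f) = k$.

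The proof: if $(\gamma - 1)$ divided $g$, then $\mathrm{in}(g)$ would be divisible by the linear form $\mathrm{in}(\gamma - 1)$. The same reasoning applies in $d$ variables, since an element of $\F_p[[S,T]]$ keeps the same leading form in the larger ring.

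Next, reduce to the two base cases $\lambda \in \{0, 1\}$. Put $w = (\sigma - 1)(\sigma^{-1} - 1) = -\sigma^{-1}(\sigma - 1)^2$, which satisfies $\iota(w) = w$. Then $w^j h + \iota(w^j h) = w^j (h + \iota(h))$, and $\aug(w^j h) = 0$. If the base element has the shape required by the Lemma, the $l_0$-value increases by exactly $2j$. So it suffices to find $h_0$ with $l_0 = 0$ and $h_1$ with $l_0 = 1$, each with $h_i + \iota(h_i) = (\text{product of } \gamma - 1) \cdot g$ and $\mathrm{in}(g)$ free of linear factors.

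\emph{Case $\lambda = 0$.} Take $h_0 = (\sigma - 1) + m (\tau - 1)^2$. A direct expansion gives
\[
h_0 + \iota(h_0) = \sigma^{-1}(\sigma - 1)^2 + m\bigl((\tau - 1)^2 + (\tau^{-1} - 1)^2\bigr),
\]
whose leading form is $S^2 + 2m T^2$. For $p$ odd, choose $m$ with $-2m$ a non-square mod $p$; this form is then irreducible, so $l_0 = 0$ by the Lemma.

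\emph{Case $\lambda = 1$.} Take $h_1 = (\sigma - 1) u$. Then
\[
h_1 + \iota(h_1) = (\sigma - 1)\bigl(u - \sigma^{-1}\iota(u)\bigr).
\]
If $u = c_0 + c_1 S + c_2 T + (\text{quadratic terms})$, the linear part of $g = u - \sigma^{-1}\iota(u)$ is $(c_0 + 2c_1) S + 2c_2 T$. Choose $c_0 = -2c_1 \neq 0$ and $c_2 = 0$ to kill it. Then tune the quadratic and cubic coefficients of $u$ (adding multiples of $(\tau - 1)^2$, $(\sigma - 1)(\tau - 1)$, and so on) so that $\mathrm{in}(g)$ becomes an irreducible quadratic form, for instance $S^2 - cT^2$ with $c$ a non-square. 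The Lemma then gives $l_0 = 1$. One can also work with polynomials in $\F_p[\Gamma]$ throughout, which is harmless.

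The main obstacle is the base-case bookkeeping, especially for $p = 2$. There every quadratic form $aS^2 + bST + cT^2$ without a linear factor must be $S^2 + ST + T^2$, and the coefficient $2$ vanishes, so the linear-term computations above degenerate. For $p = 2$ I would first try to arrange, by the same coefficient-tuning, that $\mathrm{in}(g)$ is either $S^2 + ST + T^2$ or an irreducible cubic such as $S^3 + S^2T + T^3$. For example, I would try $h_0 = (\sigma - 1)(\tau - 1) + (\sigma - 1)^2 \tau$-type combinations and read off the leading form directly. If the leading-form method gets stuck for some small case, a fallback is a direct check in $\F_2[[S,T]]$ that no $(1+S)^a(1+T)^b - 1$ divides $g$, by substituting a parametrization of that curve.
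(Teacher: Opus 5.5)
Your reduction to $\lambda\in\{0,1\}$ is correct; multiplying by the $\iota$-invariant $w^j$ is the paper's Lemma~\ref{lem:key_constr1} with $h_2=\sigma-1$. Your leading-form lemma and the case $\lambda=0$ for odd $p$ are also correct.

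\textbf{The $\lambda=1$ step fails as stated.} Carry your expansion one order further. The quadratic part of $g=u-\sigma^{-1}\iota(u)$ is $-(c_0+2c_1)S^2-c_2ST-c_2T^2$, which does not involve the quadratic coefficients of $u$ at all. So once you kill the linear part ($c_0=-2c_1$, $c_2=0$), the quadratic part vanishes too. This is forced. The involution $\iota$ acts on $\mathfrak{m}^n/\mathfrak{m}^{n+1}$ as $(-1)^n$, so for odd $p$ every nonzero $\iota$-invariant element has a leading form of even degree. Since $h_1+\iota(h_1)=(\sigma-1)g$ is $\iota$-invariant, $\mathrm{in}(g)$ always has \emph{odd} degree, and $\mathrm{in}(g)=S^2-cT^2$ can never occur. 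You need an irreducible binary cubic and an element realizing it. For odd $p$ one choice is as follows. Put $S'=\sigma^{1/2}-\sigma^{-1/2}$ and $T'=\tau^{1/2}-\tau^{-1/2}$; these satisfy $\iota(S')=-S'$, $\iota(T')=-T'$ and have leading forms $S,T$. Take $q$ an irreducible binary cubic over $\F_p$ and $h=\tfrac12 S'q(S',T')$. Then $\aug(h)=0$ and $h+\iota(h)=(\sigma-1)\cdot\sigma^{-1/2}q(S',T')$, so $l_0=1$ by your lemma.

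\textbf{For $p=2$ nothing is proved, and your suggested leading forms cannot occur for $\lambda=0$.} Modulo $\mathfrak{m}^4$ any $h$ is a finite sum $\sum_\gamma c_\gamma\gamma$. In characteristic $2$ this gives $h+\iota(h)=\sum_\gamma c_\gamma\gamma^{-1}(\gamma-1)^2$.
\begin{itemize}
\item Its degree-$2$ part is $L^2$, where $L=\sum_\gamma c_\gamma\,\mathrm{in}(\gamma-1)$.
\item If $L=0$, its degree-$3$ part $\sum_\gamma c_\gamma\,\mathrm{in}(\gamma-1)^3$ vanishes at every $\F_2$-point of $\mathbb{P}^1$, so it is divisible by $ST(S+T)$.
\end{itemize}
Hence in $\F_2[[\lrang{\sigma,\tau}]]$ the leading form of $h+\iota(h)$ is never $S^2+ST+T^2$ or an irreducible cubic. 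For the same reason a quadratic $\mathrm{in}(g)$ is impossible for $\lambda=1$. Your $h_0$ also degenerates at $p=2$, to leading form $S^2$.

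One has to go to degree $\geq 4$. For example, $h=\sigma+\tau+\sigma\tau+\sigma\tau^2+\sigma^2\tau+\sigma^3\tau$ has leading form $(S^2+ST+T^2)^2$, so $\lambda=0$ is reachable this way. A $\lambda=1$ construction at $p=2$ is still missing from your proposal.

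\textbf{How the paper avoids this.} It does not use leading forms. For the explicit elements $\sigma^3\tau^2+\sigma-\tau-1$ and $\sigma^2\tau^2+\sigma-\tau-1$, it excludes every divisor $\gamma-1$ in four moves:
\begin{enumerate}
\item project $\gamma$ to $\lrang{\sigma,\tau}$;
\item replace $\gamma-1$ by a divisor of the form $\sigma^{\alpha/\beta}\tau-1$;
\item substitute $\tau=\sigma^{-\alpha/\beta}$;
\item compare exponents in $\F_p[[\lrang{\sigma}]]$.
\end{enumerate}
This argument is uniform in $p$. Your criterion is only sufficient: it cannot certify the paper's $\lambda=0$ element at $p=2$, whose leading form there is $T^2$.
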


We note that this fails if $d = 1$, where $l_0(\ol{h} + \iota(\ol{h}))$ is necessarily positive and even (see \cite{Kata_36}).

Let us deduce Theorem \ref{thm:main2}, assuming this proposition.

\begin{proof}[Proof of Theorem \ref{thm:main2}]
Take any pair $(\lambda, \mu)$ of non-negative integers.
By Proposition \ref{prop:constr_key}, we can take an element $\ol{h} \in \F_p[\Gamma]$ such that $\aug(\ol{h}) = 0$ and $l_0(\ol{h} + \iota(\ol{h})) = \lambda$.

We claim that there is a lift of $\ol{h}$ to $h \in \Z[\Gamma]$ such that $h$ is admissible in the sense of Definition \ref{defn:adm}.
For this, we first take any lift $h \in \Z[\Gamma]$ and modify it by using the liberty of adding $p \Z[\Gamma]$ to $h$.
Condition (b) can be satisfied by subtracting suitable elements of the form $p \gamma$ with $\gamma \in \Gamma$.
Condition (c) can be satisfied by subtracting $p \sigma_1, \dots, p \sigma_d$, where $\sigma_1, \dots, \sigma_d$ is a basis of $\Gamma$.
Finally, since $\aug(h) \in p\Z$, condition (a) can be satisfied by subtracting $\aug(h)$.

Note that $m_0(h + \iota(h)) = 0$ (i.e., $h + \iota(h)$ is not divisible by $p$) since $\ol{h} + \iota(\ol{h}) \neq 0$.
Then the element $p^{\mu} h \in \Z[\Gamma]$ is again admissible and satisfies
\[
l_0(p^{\mu} h + \iota(p^{\mu} h)) = \lambda
\AND
m_0(p^{\mu} h + \iota(p^{\mu} h)) = \mu.
\]
By taking the bouquet $X$ and the voltage assignment $\alpha: \bE_X \to \Gamma$ such that $h_{\alpha} = p^{\mu} h$, the derived $\Gamma$-covering $X_{\infty}/X$ has the prescribed $\lambda$- and $\mu$-invariants.
\end{proof}

\subsection{$l_0$-invariants with $\F_p$-coefficients}\label{ss:l_0_res}

The aim of this subsection is to prove Proposition \ref{prop:constr_key}.
Let $\Gamma \simeq \Z_p^d$ with $d \geq 2$.

\begin{lem}\label{lem:key_constr1}
For $h_1, h_2 \in \F_p[[\Gamma]]$, setting $h = h_1(h_2 + \iota(h_2))$, we have
\[
l_0(h + \iota(h)) = l_0(h_1 + \iota(h_1)) + l_0(h_2 + \iota(h_2))
\]
as long as $h_1 + \iota(h_1) \neq 0$ and $h_2 + \iota(h_2) \neq 0$.
Moreover, if $\aug(h_1) = 0$ or $\aug(h_2) = 0$, then $\aug(h) = 0$ holds.
\end{lem}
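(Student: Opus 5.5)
The plan is to compute $h + \iota(h)$ directly and factor it. Set $g_2 = h_2 + \iota(h_2)$. Since $\iota$ is a ring automorphism of $\F_p[[\Gamma]]$ (it is the continuous extension of inversion on the abelian group $\Gamma$) and $\iota$ is an involution, we have $\iota(g_2) = g_2$. Hence
\[
h + \iota(h) = h_1 g_2 + \iota(h_1)\iota(g_2) = (h_1 + \iota(h_1))\, g_2 = (h_1+\iota(h_1))(h_2+\iota(h_2)).
\]
Both factors are nonzero by hypothesis, and $\F_p[[\Gamma]]$ is a domain, so the product is nonzero and $l_0$ of it is defined.

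Next, additivity of $l_0$ on products follows from the definition of $l_0$ on $\F_p[[\Gamma]] \setminus \{0\}$. Indeed, $l_0(f) = \sum_P \ord_P(f)$ over the primes $P = (\gamma - 1)$ with $\gamma \in \Gamma \setminus \Gamma^p$, and each $\ord_P$ is additive on products in the UFD $\F_p[[\Gamma]]$. Therefore $l_0(fg) = l_0(f) + l_0(g)$ for nonzero $f, g$. This gives the first claim.

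For the augmentation claim, $\aug: \F_p[[\Gamma]] \to \F_p$ is a ring homomorphism with $\aug \circ \iota = \aug$. Thus
\[
\aug(h) = \aug(h_1)\cdot 2\aug(h_2).
\]
This vanishes if either $\aug(h_1) = 0$ or $\aug(h_2) = 0$.

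No step is a real obstacle. The only points needing a remark are that $\iota$ is multiplicative, which requires $\Gamma$ to be abelian (it is), and that $l_0$ is additive, which holds because the valuations $\ord_P$ are additive.
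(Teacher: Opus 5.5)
Your proposal is correct and follows the paper's proof: the first claim comes from the factorization $h + \iota(h) = (h_1 + \iota(h_1))(h_2 + \iota(h_2))$ together with additivity of $l_0$ (which the paper leaves implicit), and the second from $\aug(h) = 2\aug(h_1)\aug(h_2)$.
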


\begin{proof}
The first claim follows from 
\[
h + \iota(h) = (h_1 + \iota(h_1)) (h_2 + \iota(h_2))
\]
and the second from $\aug(h) = 2 \aug(h_1) \aug(h_2)$.
\end{proof}

Because of this lemma, we only have to construct values $0$ and $1$.
By considering $h$ of the form $a \sigma^{\alpha} \tau^{\beta} + b \sigma + c \tau - (a + b + c)$, we find the following examples.

\begin{prop}\label{prop:key_constr2}
Let $\sigma, \tau \in \Gamma$ be elements that can be extended to a basis of $\Gamma$.
\begin{itemize}
\item[(1)]
For $h = \sigma^3 \tau^2 + \sigma - \tau - 1$, we have $l_0(h + \iota(h)) = 0$.
\item[(2)]
For $h = \sigma^2 \tau^2 + \sigma - \tau - 1$, we have $l_0(h + \iota(h)) = 1$.
\end{itemize}
\end{prop}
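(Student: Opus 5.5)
We reduce to the case $d=2$, find all primes of the form $(\gamma-1)$ dividing $f:=\ol{h}+\iota(\ol{h})$ by a case analysis on exponents, and then compute multiplicities.

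\textbf{Reduction to $d=2$.} Put $\Gamma_0=\lrang{\sigma,\tau}\simeq\Z_p^2$, which is a direct summand of $\Gamma$. Then $\F_p[[\Gamma]]$ is a power series ring over $\F_p[[\Gamma_0]]$, hence faithfully flat over it, and $f$ lies in $\F_p[[\Gamma_0]]$.

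\begin{itemize}
\item Let $\gamma\in\Gamma\setminus\Gamma^p$ with $\gamma\notin\Gamma_0$. Then $\Z_p\gamma\cap\Gamma_0=0$, since $\Gamma_0$ is saturated and $\gamma\notin\Gamma^p$. So $\F_p[[\Gamma_0]]\to\F_p[[\Gamma]]/(\gamma-1)\simeq\F_p[[\Gamma/\lrang{\gamma}]]$ is injective, and $(\gamma-1)\nmid f$ because $f\neq0$.
\item For $\gamma\in\Gamma_0\setminus\Gamma_0^p$, the element $\gamma-1$ generates a prime in both rings. The $(\gamma-1)$-adic order of $f$ is the same in both rings, because $\F_p[[\Gamma]]/(\gamma-1)^k$ is flat over $\F_p[[\Gamma_0]]/(\gamma-1)^k$.
\end{itemize}

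Hence $l_0$ may be computed in $\F_p[[\Gamma_0]]$, and we may assume $d=2$ with basis $\sigma,\tau$.

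\textbf{Detecting the primes.} Let $\gamma=\sigma^a\tau^b$ with $(a,b)\notin p\Z_p^2$. The quotient $\Gamma/\lrang{\gamma}\simeq\Z_p$ has a generator $x$ with $\sigma\mapsto x^{b}$ and $\tau\mapsto x^{-a}$. Then $(\gamma-1)\mid f$ if and only if the image of $f$ in $\F_p[[\Z_p]]$ vanishes.

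The image is a finite formal sum $\sum c_e x^e$ with exponents $e\in\Z_p$. In case (1), setting $u=b$ and $v=-a$, it reads
\[
x^{3u+2v}+x^{-3u-2v}+x^{u}+x^{-u}-x^{v}-x^{-v}-2x^0 .
\]
Such a sum vanishes only when the exponents coincide in groups whose coefficients sum to $0$ in $\F_p$. For $p\neq2$ the term $-2x^0$ must be cancelled, which forces one of $u$, $v$, or $3u+2v$ to be $0$. Each such subcase leaves finitely many linear relations on $(u,v)$, and one checks that each is either inconsistent with vanishing or forces $(u,v)\in p\Z_p^2$. The same bookkeeping in case (2), with $2u+2v$ in place of $3u+2v$, should leave exactly one line, namely $u=-v$, i.e.\ $\gamma=\sigma\tau$ up to units. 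Indeed, for $u=1,v=-1$ the sum becomes $x^0+x^0+x+x^{-1}-x^{-1}-x-2x^0=0$.

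The small primes $p=2,3$ need separate attention, since coefficients such as $2$ vanish and the exponent relations change mod $p$. I expect this exhaustive but elementary case analysis, done uniformly in $p$, to be the main obstacle.

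\textbf{Multiplicity in case (2).} Change basis to $\gamma=\sigma\tau$ and $\tau$, and write $f$ in $\F_p[[\Gamma_0]]=\F_p[[\Gamma/\lrang{\gamma}]][[\gamma-1]]$. Then check that the coefficient of $(\gamma-1)^1$ is nonzero. Concretely, differentiate the Laurent expression formally with respect to $\gamma$ (substituting $\sigma=\gamma\tau^{-1}$) and evaluate at $\gamma=1$, obtaining an explicit nonzero element of $\F_p[[\tau]]$; again $p=2$ may need a separate check.

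This yields $l_0(h+\iota(h))=0$ in case (1) and $l_0(h+\iota(h))=1$ in case (2).
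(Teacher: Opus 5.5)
Your outline is sound, but as written it does not cover $p=2$, which you leave open. There your anchor fails: the coefficient $-2$ of $x^0$ vanishes in $\F_2$, so nothing forces one of $u,v,3u+2v$ (resp.\ $2u+2v$) to be $0$. The missing idea is to anchor on a term whose coefficient is a unit in every $\F_p$, e.g.\ $x^u$ (the image of $\sigma$). This is what the paper does. In case (1), $x^u$ must collide with one of $\pm(3u+2v)$, $-u$, $\pm v$, $0$. This forces $u=0$, $v=u$, $v=-u$ or $v=-2u$, and the image becomes respectively $x^{2v}+x^{-2v}-x^{v}-x^{-v}$, $x^{5u}+x^{-5u}-2$, $x^{u}+x^{-u}-2$, or $2x^{u}+2x^{-u}-x^{2u}-x^{-2u}-2$. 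Each of these keeps a term with coefficient $\pm1$ whenever $(u,v)\neq(0,0)$, for every $p$. In case (2) the same anchoring forces $u=0$, $v=u$, $v=-u$, $u=-2v$ or $3u=-2v$. All but $v=-u$ give a nonzero image; in the last two cases it is $2x^{2v}+2x^{-2v}-x^{v}-x^{-v}-2$ and $2x^{u}+2x^{-u}-x^{v}-x^{-v}-2$. So the bookkeeping is uniform in $p$. Also, $p=3$ never needed separate attention, even with your original anchor.

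Apart from this, your route differs from the paper's in two harmless ways. For the reduction to $\lrang{\sigma,\tau}$, the paper uses a $\Z_p$-linear retraction $\pi\colon\Gamma\to\lrang{\sigma,\tau}$. If $\gamma-1$ divides $f$, then so does $\pi(\gamma)-1$, and one passes to a primitive element via $\sigma^{\alpha/\beta}\tau-1\mid\sigma^{\alpha}\tau^{\beta}-1$. Your faithful-flatness argument also works but is heavier. For (2), the paper factors $h+\iota(h)=(1-\sigma^{-1}\tau^{-1})(\sigma^2\tau^2+\sigma\tau+\sigma-\tau-1-\sigma^{-1}\tau^{-1})$ and shows the second factor has $l_0=0$ by the argument of (1). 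This avoids comparing multiplicities across rings. You instead isolate the single line $u=-v$ and compute the multiplicity directly. That computation needs no special case either: with $\gamma=\sigma\tau$, the coefficient of $\gamma-1$ is $\tau^{-1}-\tau=-\tau^{-1}(\tau-1)(\tau+1)$. This is nonzero in $\F_p[[\lrang{\tau}]]$ for all $p$, including $p=2$.
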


\begin{proof}
(1)
We have
\[
h + \iota(h) 
= \sigma^3 \tau^2 + \sigma^{-3} \tau^{-2} + \sigma + \sigma^{-1} - \tau - \tau^{-1} - 2.
\]
Suppose, for contradiction, that there exists an element $\gamma \in \Gamma$ such that $\gamma - 1$ divides $h + \iota(h)$ in $\F_p[[\Gamma]]$.
Let $\lrang{\sigma, \tau} \simeq \Z_p^2$ be the closed subgroup of $\Gamma$.
We take a $\Z_p$-linear left inverse $\Gamma \twoheadrightarrow \lrang{\sigma, \tau}$ of the inclusion $\lrang{\sigma, \tau} \hookrightarrow \Gamma$, and write $\gamma \mapsto \sigma^{\alpha} \tau^{\beta}$ with $\alpha, \beta \in \Z_p$.
Then the element $\sigma^{\alpha} \tau^{\beta} - 1$ divides $h + \iota(h)$ in $\F_p[[\lrang{\sigma, \tau}]]$.
Since $h + \iota(h) \neq 0$, one of $\alpha, \beta$ is nonzero.

We first suppose that $\ord_p(\alpha) \geq \ord_p(\beta)$.
Then $\beta \neq 0$ and $(\sigma^{\alpha/\beta} \tau - 1) \mid (\sigma^{\alpha} \tau^{\beta} - 1)$, so the element $\sigma^{\alpha/\beta} \tau - 1$ divides $h + \iota(h)$ in $\F_p[[\lrang{\sigma, \tau}]]$.
This implies
\[
\sigma^{3 - 2 \alpha/\beta} + \sigma^{-3 + 2 \alpha/\beta} + \sigma + \sigma^{-1} - \sigma^{-\alpha/\beta} - \sigma^{\alpha/\beta} - 2 = 0
\]
in $\F_p[[\lrang{\sigma}]]$.
Since the third term $\sigma$ must be cancelled by another element, one of the other exponents
\[
3 - 2 \alpha/\beta, -3 + 2 \alpha/\beta, -1, -\alpha/\beta, \alpha/\beta, 0
\]
is equal to $1$ (as an element of $\Z_p$).
This implies $\alpha/\beta \in \{-1, 1, 2\}$.
However, in any case the displayed equation in $\F_p[[\lrang{\sigma}]]$ does not hold.
This is a contradiction.
When $\ord_p(\alpha) \leq \ord_p(\beta)$, we can also reach a contradiction in the same manner.

(2)
We have
\begin{align*}
h + \iota(h) 
& = \sigma^2 \tau^2 + \sigma^{-2} \tau^{-2} + \sigma + \sigma^{-1} - \tau - \tau^{-1} - 2\\
& = (1 - \sigma^{-1} \tau^{-1}) (\sigma^2 \tau^2 + \sigma \tau + \sigma - \tau - 1 - \sigma^{-1} \tau^{-1}).
\end{align*}
We can check that $l_0(\sigma^2 \tau^2 + \sigma \tau + \sigma - \tau - 1 - \sigma^{-1} \tau^{-1}) = 0$ in the same way as in (1).
\end{proof}

Now Proposition \ref{prop:constr_key} is established and thus the proof of Theorem \ref{thm:main2} is completed.

For explicit examples it is convenient to find simpler elements $h$.
The following remark deals with the case $\lambda \in \{2, 4, 6, \dots\}$ and is essentially due to \cite{Kata_36}.

\begin{rem}\label{rem:constr_even}
Let $\lambda$ be a positive even integer.
For any $\gamma \in \Gamma \setminus \Gamma^p$, the element
\[
h = \sum_{i=0}^{\lambda/2 - 1} (-1)^i \binom{\lambda}{i} \gamma^{\lambda/2 - i} + (-1)^{\lambda/2} \frac{1}{2} \binom{\lambda}{\lambda/2}
\]
satisfies $\aug(h) = 0$ and $l_0(h + \iota(h)) = \lambda$.
Indeed, since
\begin{align*}
\iota(h)
& = \sum_{i=0}^{\lambda/2 - 1} (-1)^i \binom{\lambda}{i} \gamma^{-\lambda/2 + i} + (-1)^{\lambda/2} \frac{1}{2} \binom{\lambda}{\lambda/2}\\
& = \sum_{i=\lambda/2 + 1}^{\lambda} (-1)^{\lambda - i} \binom{\lambda}{\lambda - i} \gamma^{-\lambda/2 + (\lambda - i)} + (-1)^{\lambda/2} \frac{1}{2} \binom{\lambda}{\lambda/2}\\
& = \sum_{i=\lambda/2 + 1}^{\lambda} (-1)^i \binom{\lambda}{i} \gamma^{\lambda/2 - i} + (-1)^{\lambda/2} \frac{1}{2} \binom{\lambda}{\lambda/2},
\end{align*}
we have
\[
h + \iota(h)
= \sum_{i=0}^{\lambda} (-1)^i \binom{\lambda}{i} \gamma^{\lambda/2 - i}
= \gamma^{-\lambda/2} (\gamma - 1)^{\lambda}.
\]
Note that this description of $h$ is independent of $p$.
See Table \ref{tab:l_even}.
\end{rem}

\begin{table}[htbp]
\centering
\begin{tabular}{c | @{\hspace{1em}} c c}
\toprule
$l_0(h + \iota(h))$ & $h$ & $h$ when $p =2$ \\
\midrule
$2$ & $\gamma - 1$ & $\gamma + 1$\\
$4$ & $\gamma^2 - 4 \gamma + 3$ & $\gamma^2 + 1$\\
$6$ & $\gamma^3 - 6 \gamma^2 + 15 \gamma - 10$ & $\gamma^3 + \gamma$\\
$8$ & $\gamma^4 - 8\gamma^3 + 28 \gamma^2 - 56 \gamma + 35$ & $\gamma^4 + 1$\\
$10$ & $\gamma^5 - 10 \gamma^4 + 45 \gamma^3 - 120 \gamma^2 + 210 \gamma - 126$ & $\gamma^5 + \gamma^3$\\
$12$ & $\gamma^6 - 12 \gamma^5 + 66 \gamma^4 - 220 \gamma^3 + 495 \gamma^2 - 792 \gamma + 462$ & $\gamma^6 + \gamma^2$\\
\bottomrule
\end{tabular}
\caption{Elements with $l_0(h + \iota(h)) \in \{2, 4, 6, 8, 10, 12\}$ for any $p$}
\label{tab:l_even}
\end{table}

The following remark deals with the case $\lambda \in \{0, 1, 3, 5, 7, \dots\}$.

\begin{rem}\label{rem:constr_odd}
\begin{itemize}
\item
When $p \geq 3$, the element $h = \sigma + \tau - 2$ satisfies $l_0(h + \iota(h)) = 0$.
Note that when $p = 2$, the same element satisfies $l_0(h + \iota(h)) = 2$.
\item
When $p = 2$, the element $h = \sigma \tau + \sigma + \tau + 1$ satisfies $l_0(h + \iota(h)) = 3$.
Indeed,
\[
h + \iota(h) 
= \sigma \tau + \sigma^{-1} \tau^{-1} + \sigma + \sigma^{-1} + \tau + \tau^{-1}
= (1 - \sigma^{-1} \tau^{-1}) (\sigma - 1)(\tau -1).
\]
\item
When $p \geq 5$, taking any $a \in \F_p \setminus \{0, 1, -1\}$, the element $h = a \sigma \tau + \sigma - \tau - a$ satisfies $l_0(f) = 1$.
Indeed,
\begin{align*}
h + \iota(h) 
& = a \sigma \tau + a\sigma^{-1} \tau^{-1} + \sigma + \sigma^{-1} - \tau - \tau^{-1} - 2a\\
& = (1 - \sigma^{-1} \tau^{-1})(a \sigma \tau + \sigma - \tau - a)
\end{align*}
and we have $l_0(a \sigma \tau + \sigma - \tau - a) = 0$.
\end{itemize}
\end{rem}

Examples when $p = 2$ are listed in Table \ref{tab:l_odd}.
We use Proposition \ref{prop:key_constr2} for $\lambda = 0, 1$ and Remark \ref{rem:constr_odd} for $\lambda = 3$.
For $\lambda = 5, 7, 9, 11$, we decompose $\lambda = 3 + (\lambda - 3)$ and apply Lemma \ref{lem:key_constr1} to the case $\lambda = 3$ and the case $2, 4, 6, 8$ obtained in Table \ref{tab:l_even} with $\gamma = \sigma \tau$.

\begin{table}[htbp]
\centering
\begin{tabular}{c @{\hspace{1em}} c}
\toprule
$l_0(h + \iota(h))$ & $h$ \\
\midrule
$0$ & $\sigma^3 \tau^2 + \sigma + \tau + 1$\\
$1$ & $\sigma^2 \tau^2 + \sigma + \tau + 1$\\
$3$ & $\sigma \tau + \sigma + \tau + 1$\\
$5$ & $\sigma^2 \tau^2 +  \sigma^2 \tau + \sigma \tau^2 + \sigma + \tau + 1$\\
$7$ & $\sigma^3 \tau^3 + \sigma^3 \tau^2 + \sigma^2 \tau^3 + \sigma^2 \tau + \sigma \tau^2 + \sigma \tau$\\
$9$ & $\sigma^4 \tau^4 + \sigma^4 \tau^3 + \sigma^3 \tau^4 + \sigma^3 \tau^2 +  \sigma^2 \tau^3 + \sigma^2 \tau + \sigma \tau^2 + \sigma + \tau + 1$\\
$11$ & $\sigma^5 \tau^5 + \sigma^5 \tau^4 + \sigma^4 \tau^5 + \sigma^4 \tau^3 +  \sigma^3 \tau^4 + \sigma^3 \tau^3$\\
\bottomrule
\end{tabular}
\caption{Elements with $l_0(h + \iota(h)) \in \{0, 1, 3, 5, 7, 9, 11\}$ for $p = 2$}
\label{tab:l_odd}
\end{table}

\subsection{Numerical examples}\label{ss:examples_bouquets}

We focus on $p = 2$ and $d = 2$.
We take a basis $\sigma, \tau$ of $\Gamma$.
By Tables \ref{tab:l_even} (with $\gamma = \sigma$) and \ref{tab:l_odd}, for each $\lambda = 0, 1, 2, \dots, 12$, we obtain explicit examples of voltage assignments realizing $\lambda$ with $\mu = 0$ (recall that $\mu$ can be made arbitrary by multiplying the number of edges as in the proof of Theorem \ref{thm:main2}).
The resulting examples are listed in Table \ref{tab:voltages}.
Note that ``$\sigma, \sigma$'' and ``$\tau, \tau$'' are added to ensure that the derived graphs are connected, i.e., to make the element $h$ admissible.

\begin{table}[htbp]
\centering

\begin{minipage}{0.4\linewidth}
\centering
\begin{tabular}{c |@{\hspace{1em}} c}
\toprule
$\lambda$ & voltage $\alpha(e_1),\dots,\alpha(e_t)$ \\
\midrule
2 & $\sigma, \tau, \tau$ \\
4 & $\sigma^2, \sigma, \sigma, \tau, \tau$ \\
6 & $\sigma^3, \sigma, \tau, \tau$ \\
8 & $\sigma^4, \sigma, \sigma, \tau, \tau$ \\
10 & $\sigma^5, \sigma^3, \tau, \tau$\\
12 & $\sigma^6, \sigma^2, \sigma, \sigma, \tau, \tau$\\
\bottomrule
\end{tabular}
\end{minipage}
\hfill
\begin{minipage}{0.55\linewidth}
\centering
\begin{tabular}{c |@{\hspace{1em}} c}
\toprule
$\lambda$ & voltage $\alpha(e_1),\dots,\alpha(e_t)$ \\
\midrule
0 & $\sigma^3 \tau^2, \sigma, \tau$ \\
1 & $\sigma^2 \tau^2, \sigma, \tau$ \\
3 & $\sigma \tau, \sigma, \tau$ \\
5 & $\sigma^2 \tau^2, \sigma^2 \tau, \sigma \tau^2, \sigma, \tau$ \\
7 & $\sigma^3 \tau^3, \sigma^3 \tau^2, \sigma^2 \tau^3, \sigma^2 \tau, \sigma \tau^2, \sigma \tau$ \\
9 & $\sigma^4 \tau^4, \sigma^4 \tau^3, \sigma^3 \tau^4, \sigma^3 \tau^2, \sigma^2 \tau^3, \sigma^2 \tau, \sigma \tau^2,  \sigma, \tau$\\
11 & $\sigma^5 \tau^5, \sigma^5 \tau^4, \sigma^4 \tau^5, \sigma^4 \tau^3, \sigma^3 \tau^4, \sigma^3 \tau^3$\\
\bottomrule
\end{tabular}
\end{minipage}

\caption{Voltage assignments with prescribed $\lambda$ with $\mu=0$ for $p = 2$}
\label{tab:voltages}
\end{table}

For example, the derived graphs for the voltage assignment for $\lambda = 3$ is as follows.
\[
\xymatrix{
\begin{tikzpicture}[baseline]
\node[circle,fill,inner sep=1pt] (a) {};
\foreach \len in {15,30}
\path (a) edge[loop above, looseness=\len, out=60, in=120] (a);
\path (a) edge[loop above, looseness=45, out=60, in=120] node[above] {$\sigma \tau, \sigma, \tau$} (a);
\end{tikzpicture}
&
\begin{tikzpicture}[baseline={(current bounding box.center)}]
\foreach \x in {0,1}
{
  \foreach \y in {0,1}
  {
    \node[circle,fill,inner sep=1.5pt]
      (v\x\y) at (\x,\y) {};
  }
}
\foreach \y in {0,1}
{
  \foreach \x/\xx in {0/1}
  {
    \draw (v\x\y) -- (v\xx\y);
  }
}
\foreach \y in {0,1}
{
  \draw (v1\y) to[out=135,in=45,looseness=0.2] (v0\y);
}
\foreach \x in {0,1}
{
  \foreach \y/\yy in {0/1}
  {
    \draw (v\x\y) -- (v\x\yy);
  }
}
\foreach \x in {0,1}
{
  \draw (v\x1) to[out=225,in=135,looseness=0.2] (v\x0);
}
\draw (v00) -- (v11);
\draw (v01) -- (v10);
\draw (v11) to[out=180,in=90,looseness=0.2] (v00);
\draw (v10) to[out=90,in=0,looseness=0.2] (v01);
\end{tikzpicture}
\ar[l]
&
\begin{tikzpicture}[baseline={(current bounding box.center)}]
\foreach \x in {0,1,2,3}
{
  \foreach \y in {0,1,2,3}
  {
    \node[circle,fill,inner sep=1.5pt]
      (v\x\y) at (\x,\y) {};
  }
}
\foreach \y in {0,1,2,3}
{
  \foreach \x/\xx in {0/1,1/2,2/3}
  {
    \draw (v\x\y) -- (v\xx\y);
  }
}
\foreach \y in {0,1,2,3}
{
  \draw (v3\y) to[out=135,in=45,looseness=0.2] (v0\y);
}
\foreach \x in {0,1,2,3}
{
  \foreach \y/\yy in {0/1,1/2,2/3}
  {
    \draw (v\x\y) -- (v\x\yy);
  }
}
\foreach \x in {0,1,2,3}
{
  \draw (v\x3) to[out=225,in=135,looseness=0.2] (v\x0);
}
\foreach \x/\xx in {0/1,1/2,2/3,3/0}
{
  \foreach \y/\yy in {0/1,1/2,2/3}
  {
    \draw (v\x\y) -- (v\xx\yy);
  }
}
\foreach \x/\xx in {0/1,1/2,2/3}
{
  \draw (v\x3) -- (v\xx0);
}
\draw (v33) to[out=180,in=90,looseness=0.2] (v00);
\end{tikzpicture}
\ar[l] 
& \cdots \ar[l]}
\]

For $\lambda = 0, 1, 2, \dots, 8$ and for $n = 0, 1, 2, \dots, 6$, by using SageMath, we computed $\ord_p(\kappa_{X_n})$ of the derived $\Z_p^2$-covering $X_{\infty}/X$.
The results are listed in Table \ref{tab:l_all_comp}.
The table also shows the suggested asymptotic formula; theoretically we verified the coefficients of $np^n$, but the other coefficients are not confirmed.

\begin{table}[htbp]
\centering
\begin{tabular}{c |@{\hspace{1em}} c c c c c c c @{\hspace{1em}} c}
\toprule
$\lambda$ & \swid{$0$} & \swid{$1$} & \swid{$2$} & \swid{$3$} & \swid{$4$} & \swid{$5$} & \swid{$6$} & Suggested formula\\
\midrule
2 & 0 & 5 & 21 & 55 & 137 & 331 & 781 & $2np^n+2n+1$ ($n \geq 2$)\\
4 & 0 & 8 & 36 & 90 & 224 & 550 & 1324 & $4np^n - 4p^n + 6n + 8$ ($n \geq 2$)\\
6 & 0 & 8 & 34 & 120 & 294 & 724 & 1762 & $6np^n - 10p^n + 14n + 14$ ($n \geq 3$)\\
8 & 0 & 8 & 34 & 112 & 318 & 844 & 2138 & $8np^n - 16p^n + 14n + 6$ ($n \geq 2$)\\
\bottomrule
\end{tabular}
\centering
\begin{tabular}{c |@{\hspace{1em}} c c c c c c c @{\hspace{1em}}  c}
\toprule
$\lambda$ & \swid{$0$} & \swid{$1$} & \swid{$2$} & \swid{$3$} & \swid{$4$} & \swid{$5$} & \swid{$6$} & Suggested formula\\
\midrule
0 & 0 & 5 & 21 & 63 & 133 & 275 & 561 & $9p^n - 2n - 3$ ($n \geq 3$)\\
1 & 0 & 5 & 19 & 65 & 161 & 373 & 833 & $np^n + 15/2p^n - 4n - 7$ ($n \geq 3$)\\
3 & 0 & 7 & 35 & 111 & 295 & 719 & 1671 & $3np^n+9p^n-8n-9$ ($n \geq 1$)\\
5 & 0 & 8 & 34 & 134 & 358 & 902 & 2166 & $5np^n+5p^n-16n+22$ ($n \geq 3$)\\
7 & 0 & 10 & 50 & 154 & 446 & 1162 & 2838 & $7np^n + 4p^n - 20n + 14$ ($n \geq 3$)\\
\bottomrule
\end{tabular}
\caption{The values of $\ord_p(\kappa_{X_n})$ for the voltage assignments given in Table \ref{tab:voltages}}
\label{tab:l_all_comp}
\end{table}

\section*{Acknowledgments}

This work is supported by JSPS KAKENHI Grant Number 26K16965.

{
\bibliographystyle{abbrv}
\bibliography{biblio}
}

\end{document}